\documentclass[a4paper,reqno]{amsart}
      \usepackage{etex}
      \usepackage[OT2, T1]{fontenc}
      \usepackage[utf8]{inputenc}
      \usepackage{url}
      \usepackage{amsmath}
      \usepackage{mathtools}
      \usepackage{lmodern}
      \usepackage{array}
      \usepackage{graphicx}
      \usepackage{amsfonts}
      \usepackage{amssymb}
      \usepackage{extpfeil}
\usepackage{soul}
\usepackage{xargs}
      \usepackage{xcolor}
      \definecolor{royalblue}{RGB}{64, 106, 212}
      \definecolor{link}{RGB}{11,0,128}
      \definecolor{olivegreen}{RGB}{128, 128, 0}
      \usepackage{aliascnt}
      \usepackage[colorlinks=true, citecolor=royalblue,linkcolor=black,urlcolor=link]{hyperref}
      \usepackage{amsthm}
      \usepackage{esint}
      \usepackage{epic}
      \usepackage{rotating}

      \usepackage{calligra}
      \usepackage{xkeyval}

      \usepackage{exscale, relsize}
      \usepackage{stackengine}
      \usepackage{scalerel}
      \usepackage{setspace}
      \usepackage{paralist}
      \usepackage{colonequals}		
      \usepackage{mathabx}		
      \usepackage[left]{lineno}
      \usepackage{amscd}
      \usepackage[all,cmtip]{xy}	
      \usepackage{sseq}			
      \usepackage{verbatim}
      \usepackage{parskip}
      \usepackage{microtype}
      \usepackage{ragged2e}
            \usepackage[in]{fullpage}
      \usepackage{graphicx}
      \usepackage{mathrsfs} 			
      \usepackage{bm} 				
      \usepackage{cleveref}
      \usepackage{enumitem}
      \usepackage{moreenum}			
      \usepackage{blindtext}
      \usepackage{tikz-cd}
      \usepackage{tikz}
      \usepackage{subfiles}
      \usetikzlibrary{shapes.geometric}
      \usetikzlibrary{automata, bending,matrix,arrows,decorations.pathmorphing,backgrounds,positioning,fit,petri,arrows.meta}
      \usepackage{textcomp}
      \usepackage{indentfirst}
      \usetikzlibrary{arrows}
      \usepackage{enumitem}
      \tikzset{commutative diagrams/.cd,arrow style=tikz,diagrams={>=latex'}}
      \usepackage{filecontents}
      \usepackage[alphabetic,lite]{amsrefs} 	
      \usepackage{stmaryrd}	
      \usepackage{subfiles}
      \usepackage{ragged2e}
      \usetikzlibrary{arrows}
      \usepackage{stackengine}

\usepackage[colorinlistoftodos,prependcaption,textsize=tiny]{todonotes}
\newcommandx{\unsure}[2][1=]{\todo[linecolor=red,backgroundcolor=red!25,bordercolor=red,#1]{#2}}
\newcommandx{\change}[2][1=]{\todo[linecolor=blue,backgroundcolor=blue!25,bordercolor=blue,#1]{#2}}
\newcommandx{\info}[2][1=]{\todo[linecolor=OliveGreen,backgroundcolor=OliveGreen!25,bordercolor=OliveGreen,#1]{#2}}
\newcommandx{\improvement}[2][1=]{\todo[linecolor=Plum,backgroundcolor=Plum!25,bordercolor=Plum,#1]{#2}}
\newcommandx{\thiswillnotshow}[2][1=]{\todo[disable,#1]{#2}}

\usepackage{marginnote}
\newcommand{\mytodo}[2][]{{%
 \let\marginpar\marginnote
 \reversemarginpar
 \renewcommand{\baselinestretch}{0.8}%
 \todo[#1]{#2}}}

         \newcommand{\bA}{\mathbb{A}}

         \newcommand{\cU}{\mathcal{U}}

         \newcommand{\fp}{\mathfrak{p}}

         \newcommand{\eps}{\epsilon}

         \renewcommand{\b}{\textbf}

         \providecommand{\fps}[1]{[\![#1]\!]}
         \providecommand{\lps}[1]{(\!(#1)\!)}

         \providecommand{\fps}[1]{\llbracket#1\rrbracket}
         \providecommand{\lps}[1]{(\!(#1)\!)}

\newextarrow{\xbigtoto}{{15}{15}{15}{12}}
   {\bigRelbar\bigRelbar{\bigtwoarrowsleft\rightarrow\rightarrow}}
         
         \DeclareMathOperator{\Spec}{Spec}		                       
         \DeclareMathOperator{\Proj}{Proj}		                       
         \DeclareMathOperator{\Ass}{Ass}			                       
         \DeclareMathOperator{\Supp}{Supp}		                       
         \DeclareMathOperator{\length}{length}		                                                  

         \newcommand{\ba}{\begin{aligned}}
         \newcommand{\ea}{\end{aligned}}
         \newcommand{\be}{\begin{equation}}
         \newcommand{\ee}{\end{equation}}
         \newcommand{\pf}{\begin{proof}}
         \newcommand{\bpf}{\begin{proof}}
         \newcommand{\epf}{\end{proof}}
         \newcommand{\bsol}{\begin{solution}}
         \newcommand{\esol}{\end{solution}}
         \newcommand{\bthm}{\begin{thm}}
         \newcommand{\ethm}{\end{thm}}

         \newcommand{\bprop}{\begin{prop}}
         \newcommand{\eprop}{\end{prop}}
         \newcommand{\bcor}{\begin{cor}}
         \newcommand{\ecor}{\end{cor}}

         \newcommand{\brem}{\begin{rem}}
         \newcommand{\erem}{\end{rem}}

         \newcommand{\brems}{\begin{rems} \hfill \begin{enumerate}[label=\b{\thenumberingbase.},ref=\thenumberingbase]}
         
         \newcommand{\erems}{\end{enumerate} \end{rems}}
         \newcommand{\begs}{\begin{egs} \hfill \begin{enumerate}[label=\b{\thenumberingbase.},ref=\thenumberingbase]}
         
         \newcommand{\eegs}{\end{enumerate} \end{egs}}
         \newcommand{\eremstweak}{\end{enumerate} \end{rems-tweak}}
         \newcommand{\eremst}{\end{enumerate} \end{rems-tweak}}
         \newcommand{\blem}{\begin{lemma}}
         \newcommand{\elem}{\end{lemma}}

         \newcommand{\bconj}{\begin{conj}}
         \newcommand{\econj}{\end{conj}}
         \newcommand{\bprob}{\begin{Problem}}
         \newcommand{\eprob}{\end{Problem}}

         \newcommand{\bq}{\begin{Q}}
         \newcommand{\eq}{\end{Q}}
         \newcommand{\benum}{\begin{enumerate}[label={{\upshape(\alph*)}}]}
         \newcommand{\benuma}{\begin{enumerate}[label={{\upshape(\arabic*)}}]}
         \newcommand{\benumr}{\begin{enumerate}[label={{\upshape(\roman*)}}]}
         \newcommand{\eenum}{\end{enumerate}}
         \newcommand{\bc}{}
         \newcommand{\bd}{\begin{defn}}
         \newcommand{\ed}{\end{defn}}
         \newcommand{\bque}{\begin{que}}
         \newcommand{\eque}{\end{que}}
         \newcommand{\bfct}{\begin{fact}}
         \newcommand{\efct}{\end{fact}}

         \newcommand{\beg}{\begin{eg}}
         \newcommand{\eeg}{\end{eg}}

         \newcommand{\bcl}{\begin{claim}}
         \newcommand{\ecl}{\end{claim}}

         \newcommand{\Inf}{\mathrm{inf}}

\tikzset{
    labl/.style={anchor=south, rotate=90, inner sep=.5mm}
}

\newaliascnt{numberingbase}{subsection}
\numberwithin{equation}{numberingbase}

\newcommand{\statementskip}{9pt plus 2pt minus 1pt}
\newcommand{\subsectionbeforeskip}{9pt plus 2pt minus 1pt}
\newcommand{\subsectionafterskip}{4pt plus 1pt minus .5pt}
\makeatletter
\def\subsection{\@startsection{subsection}{2}%
  \z@{\subsectionbeforeskip}{\subsectionafterskip}%
  {\normalfont\bfseries}}
\def\subsubsection{\@startsection{subsubsection}{3}%
  \z@{7pt plus 1.5pt minus 1pt}{3pt plus 1pt minus .5pt}%
  {\normalfont\itshape}}
\makeatother

\newtheoremstyle{thms}{\statementskip}{\statementskip}{\itshape}{}{\bfseries}{.}{ }{}
\theoremstyle{thms}
\newaliascnt{conj}{numberingbase}
\newtheorem{conj}[conj]{Conjecture}
\aliascntresetthe{conj}
\newaliascnt{corollary}{numberingbase}
\newtheorem{corollary}[corollary]{Corollary}
\aliascntresetthe{corollary}
\newaliascnt{cor}{numberingbase}
\newtheorem{cor}[cor]{Corollary}
\aliascntresetthe{cor}
\newaliascnt{lemma}{numberingbase}
\newtheorem{lemma}[lemma]{Lemma}
\aliascntresetthe{lemma}
\newaliascnt{sublemma}{equation}

\aliascntresetthe{sublemma}
\newaliascnt{prop}{numberingbase}
\newtheorem{prop}[prop]{Proposition}
\aliascntresetthe{prop}
\newaliascnt{proposition}{numberingbase}
\newtheorem{proposition}[proposition]{Proposition}
\aliascntresetthe{proposition}
\newaliascnt{Q}{numberingbase}
\newtheorem{Q}[Q]{Question}
\aliascntresetthe{Q}
\newaliascnt{thm}{numberingbase}
\newtheorem{thm}[thm]{Theorem}
\aliascntresetthe{thm}
\newaliascnt{theorem}{numberingbase}
\newtheorem{theorem}[theorem]{Theorem}
\aliascntresetthe{theorem}
\newaliascnt{variant}{numberingbase}
\newtheorem{variant}[variant]{Variant}
\aliascntresetthe{variant}

\newtheoremstyle{claims}{\statementskip}{\statementskip}{}{}{\itshape}{.}{ }{}
\theoremstyle{claims}
\newaliascnt{claim}{equation}
\newtheorem{claim}[claim]{Claim}
\aliascntresetthe{claim}
\newaliascnt{cl-tweak}{subsubsection}
\newtheorem{cl-tweak}[cl-tweak]{Claim}
\aliascntresetthe{cl-tweak}
\Crefname{cl-tweak}{Claim}{Claims}

\newtheoremstyle{defs}{\statementskip}{\statementskip}{}{}{\bfseries}{.}{ }{}
\theoremstyle{defs}
\newaliascnt{defn}{numberingbase}
\newtheorem{defn}[defn]{Definition}
\aliascntresetthe{defn}
\newaliascnt{definition}{numberingbase}
\newtheorem{definition}[definition]{Definition}
\aliascntresetthe{definition}
\newaliascnt{eg}{numberingbase}
\newtheorem{eg}[eg]{Example}
\aliascntresetthe{eg}
\newaliascnt{example}{numberingbase}
\newtheorem{example}[example]{Example}
\aliascntresetthe{example}
\newtheorem*{egs}{Examples}
\newaliascnt{rem}{numberingbase}
\newtheorem{rem}[rem]{Remark}
\aliascntresetthe{rem}
\newaliascnt{remark}{numberingbase}
\newtheorem{remark}[remark]{Remark}
\aliascntresetthe{remark}
\newtheorem*{rems}{Remarks}

\Crefname{claim}{Claim}{Claims}
\Crefname{sublemma}{Lemma}{Lemmas}
\Crefname{conj}{Conjecture}{Conjectures}
\Crefname{cor}{Corollary}{Corollaries}
\Crefname{defn}{Definition}{Definitions}
\Crefname{eg}{Example}{Examples}
\Crefname{prop}{Proposition}{Propositions}
\Crefname{Q}{Question}{Questions}
\Crefname{rem}{Remark}{Remarks}
\Crefname{thm}{Theorem}{Theorems}
\Crefname{variant}{Variant}{Variants}
\crefname{claim}{claim}{claims}
\crefname{sublemma}{lemma}{lemmas}
\crefname{conj}{conjecture}{conjectures}
\crefname{cor}{corollary}{corollaries}
\crefname{corollary}{corollary}{corollaries}
\crefname{defn}{definition}{definitions}
\crefname{definition}{definition}{definitions}
\crefname{eg}{example}{examples}
\crefname{example}{example}{examples}
\crefname{lemma}{lemma}{lemmas}
\crefname{prop}{proposition}{propositions}
\crefname{proposition}{proposition}{propositions}
\crefname{Q}{question}{questions}
\crefname{rem}{remark}{remarks}
\crefname{remark}{remark}{remarks}
\crefname{thm}{theorem}{theorems}
\crefname{theorem}{theorem}{theorems}
\crefname{variant}{variant}{variants}
\Crefname{corollary}{Corollary}{Corollaries}
\Crefname{definition}{Definition}{Definitions}
\Crefname{example}{Example}{Examples}
\Crefname{lemma}{Lemma}{Lemmas}
\Crefname{proposition}{Proposition}{Propositions}
\Crefname{remark}{Remark}{Remarks}
\Crefname{theorem}{Theorem}{Theorems}

\theoremstyle{thms}
\newaliascnt{thm-tweak}{subsection}
\newtheorem{thm-tweak}[thm-tweak]{Theorem}
\aliascntresetthe{thm-tweak}
\crefname{thm-tweak}{theorem}{theorems}
\Crefname{thm-tweak}{Theorem}{Theorems}
\newaliascnt{lemma-tweak}{subsection}
\newtheorem{lemma-tweak}[lemma-tweak]{Lemma}
\aliascntresetthe{lemma-tweak}
\crefname{lemma-tweak}{lemma}{lemmas}
\Crefname{lemma-tweak}{Lemma}{Lemmas}
\newaliascnt{cor-tweak}{subsection}
\newtheorem{cor-tweak}[cor-tweak]{Corollary}
\aliascntresetthe{cor-tweak}
\crefname{cor-tweak}{corollary}{corollaries}
\Crefname{cor-tweak}{Corollary}{Corollaries}
\newaliascnt{prop-tweak}{subsection}
\newtheorem{prop-tweak}[prop-tweak]{Proposition}
\aliascntresetthe{prop-tweak}
\crefname{prop-tweak}{proposition}{propositions}
\Crefname{prop-tweak}{Proposition}{Propositions}
\newaliascnt{conj-tweak}{subsection}
\newtheorem{conj-tweak}[conj-tweak]{Conjecture}
\aliascntresetthe{conj-tweak}
\crefname{conj-tweak}{conjecture}{conjectures}
\Crefname{conj-tweak}{Conjecture}{Conjectures}

\theoremstyle{defs}
\newaliascnt{defn-tweak}{subsection}
\newtheorem{defn-tweak}[defn-tweak]{Definition}
\aliascntresetthe{defn-tweak}
\crefname{defn-tweak}{definition}{definitions}
\Crefname{defn-tweak}{Definition}{Definitions}
\newaliascnt{eg-tweak}{subsection}
\newtheorem{eg-tweak}[eg-tweak]{Example}
\aliascntresetthe{eg-tweak}
\crefname{eg-tweak}{example}{examples}
\Crefname{eg-tweak}{Example}{Examples}
\newtheorem*{rems-tweak}{Remarks}
\newaliascnt{rem-tweak}{subsection}
\newtheorem{rem-tweak}[rem-tweak]{Remark}
\aliascntresetthe{rem-tweak}
\crefname{rem-tweak}{remark}{remarks}
\Crefname{rem-tweak}{Remark}{Remarks}

\creflabelformat{equation}{(#2\textcolor{red}{#1}#3)}
\creflabelformat{theorem}{#2\textcolor{red}{#1}#3}
\creflabelformat{lemma}{#2\textcolor{red}{#1}#3}
\creflabelformat{proposition}{#2\textcolor{red}{#1}#3}
\creflabelformat{corollary}{#2\textcolor{red}{#1}#3}
\creflabelformat{remark}{#2\textcolor{red}{#1}#3}
\creflabelformat{definition}{#2\textcolor{red}{#1}#3}
\creflabelformat{example}{#2\textcolor{red}{#1}#3}
\creflabelformat{claim}{#2\textcolor{red}{#1}#3}
\creflabelformat{conjecture}{#2\textcolor{red}{#1}#3}
\creflabelformat{cl-tweak}{#2\textcolor{red}{#1}#3}
\creflabelformat{sublemma}{#2\textcolor{red}{#1}#3}
\creflabelformat{lemma-tweak}{#2\textcolor{red}{#1}#3}
\creflabelformat{conj}{#2\textcolor{red}{#1}#3}
\creflabelformat{conj-tweak}{#2\textcolor{red}{#1}#3}
\creflabelformat{cor}{#2\textcolor{red}{#1}#3}
\creflabelformat{cor-tweak}{#2\textcolor{red}{#1}#3}
\creflabelformat{defn}{#2\textcolor{red}{#1}#3}
\creflabelformat{defn-tweak}{#2\textcolor{red}{#1}#3}
\creflabelformat{eg}{#2\textcolor{red}{#1}#3}
\creflabelformat{eg-tweak}{#2\textcolor{red}{#1}#3}
\creflabelformat{prop}{#2\textcolor{red}{#1}#3}
\creflabelformat{prop-tweak}{#2\textcolor{red}{#1}#3}
\creflabelformat{Q}{#2\textcolor{red}{#1}#3}
\creflabelformat{rem}{#2\textcolor{red}{#1}#3}
\creflabelformat{rem-tweak}{#2\textcolor{red}{#1}#3}
\creflabelformat{thm}{#2\textcolor{red}{#1}#3}
\creflabelformat{thm-tweak}{#2\textcolor{red}{#1}#3}
\creflabelformat{variant}{#2\textcolor{red}{#1}#3}

\newtheoremstyle{subsection-tweak}
   {\subsectionbeforeskip}
   {\subsectionafterskip}%
   {}
   {}%
   {\bfseries}
   {}%
   {\newline}
   {\thmnumber{\@{#1}{}\@{#2}.}%
    \thmnote{~{\bfseries#3.}}}

\makeatletter
\newcommand{\ppafterheading}{%
  \@ifnextchar\label{\ppafterheadinglabel}{\ignorespaces}%
}
\def\ppafterheadinglabel\label#1{\label{#1}\ignorespaces}
\makeatother

\newenvironment{pp}[1][]{%
  \par\addvspace{\subsectionbeforeskip}%
  \refstepcounter{subsection}%
  \noindent\textbf{\thesubsection.\if\relax\detokenize{#1}\relax\else\ #1.\fi}%
  \par\nobreak\vspace{\subsectionafterskip}\noindent\ppafterheading
}{%
  \par\addvspace{\subsectionafterskip}%
}
\newcommand{\bpp}{\begin{pp}}
\newcommand{\epp}{\end{pp}}
\newaliascnt{pp-t}{subsubsection}
\newtheorem{pp-t}[pp-t]{}
\aliascntresetthe{pp-t}
\crefname{pp-t}{section}{sections}
\Crefname{pp-t}{Section}{Sections}

\theoremstyle{subsection-tweak}
\newaliascnt{conventions}{subsection}

\aliascntresetthe{conventions}
\crefname{conventions}{section}{sections}
\Crefname{conventions}{Section}{Sections}

\theoremstyle{subsection-tweak}
\newtheorem{pp-tweak}{}
\crefname{pp-tweak}{section}{sections}
\Crefname{pp-tweak}{Section}{Sections}

      \makeatletter
      \def\@tocline#1#2#3#4#5#6#7{
          \begingroup
          \@ifempty{#4}{}{}

          \parindent\z@ \leftskip#3\relax \advance\leftskip\@tempdima\relax
          #5\hskip-\@tempdima
            \ifcase #1
             \or\or \hskip 2em \or \hskip 1em \else \hskip 3em \fi%
            #6\nobreak\relax
          \dotfill\hbox to\@pnumwidth{\@tocpagenum{#7}}\par
          \nobreak
          \endgroup
        }
       \def\l@section{\@tocline{1}{0pt}{1pc}{}{}}

      \renewcommand{\tocsection}[3]{%
        \indentlabel{\@ifnotempty{#2}{\makebox[2em][l]{%
          \ignorespaces#1 \bfseries{#2}.\hfill}}}\bfseries{#3}
          \vspace{-3.5pt}}

      \renewcommand{\tocsubsection}[3]{%
        \indentlabel{\@ifnotempty{#2}{\hspace*{-0.5em}\makebox[3em][l]{%
          \ignorespaces#1#2.\hfill}}}#3
          \vspace{-4.5pt}}

\makeatother

\makeatletter 
\newcommand\appendix@section[1]{%
  \refstepcounter{section}%
  \orig@section*{Appendix \@Alph\c@section. #1}%
}
\let\orig@section\section
\g@addto@macro\appendix{\let\section\appendix@section}
\makeatother

      \DeclareMathVersion{normal2}

\providecommand{\m}{\mathfrak m}
\providecommand{\n}{\mathfrak n}
\renewcommand{\length}{\ell}
\renewcommand{\eps}{\varepsilon}
\providecommand{\catlim}{\operatorname{catlim}}
\renewcommand{\Inf}{\operatorname{Inf}}
\providecommand{\Min}{\operatorname{Min}}

\providecommand{\Sing}{\operatorname{Sing}}
\providecommand{\Spec}{\operatorname{Spec}}
\providecommand{\Supp}{\operatorname{Supp}}
\providecommand{\Ass}{\operatorname{Ass}}
\providecommand{\cinf}{c_{\infty}}
\providecommand{\cLM}{c_{\mathrm{LM}}}
\providecommand{\muo}{\mu^{\circ}}
\providecommand{\resultref}[1]{%
  \hyperref[#1]{\textcolor{red}{\ref*{#1}}}}
\makeatletter
\DeclareRobustCommand{\eqref}[1]{\textup{\tagform@{\resultref{#1}}}}
\makeatother

\hypersetup{
  pdftitle={Asymptotic Lech Bounds and Bertini Flags},
  pdfauthor={Hong-Ge Chen and Fei Liu},
  pdfsubject={Commutative algebra and Hilbert--Samuel multiplicity},
  pdfkeywords={Lech's inequality, arbitrary characteristic,
    singular locus, Bertini flag, Hilbert--Samuel multiplicity,
    catapower, mixed multiplicity}
}

\title[Asymptotic Lech bounds]
{Asymptotic Lech Bounds and Bertini Flags}

\author{Hong-Ge Chen}
\address{Hong-Ge Chen, School of Mathematics and Statistics,
Central China Normal University, Wuhan 430079, China}
\email{hongge\_chen@ccnu.edu.cn}

\author{Fei Liu}
\address{Fei Liu, Department of Mathematics, Run Run Shaw Building,
The University of Hong Kong, Hong Kong}
\email{liufei54@pku.edu.cn}

\def\UTFviii@defined#1{%
  \ifx#1\relax
      \PackageError{inputenc}{Unicode\space char\space\expandafter
                              \UTFviii@splitcsname\string#1\relax
                              \MessageBreak
                              not\space set\space up\space
                              for\space use\space with\space LaTeX}\@eha
  \else\expandafter
    #1%
  \fi
}

\makeatletter
\def\UTFviii@defined#1{%
  \ifx#1\relax
      ?%
  \else\expandafter
    #1%
  \fi
}

\makeatother

\subjclass[2020]{Primary 13H15; Secondary 13A35, 13B35, 13C15, 13H10}
\keywords{Hilbert--Samuel multiplicity, Lech's inequality,
arbitrary characteristic, singular locus, Bertini flag,
catapower, mixed multiplicity}

\begin{document}

\begin{abstract}
For a $d$-dimensional Noetherian local ring $(R,\m)$, with
$d>0$, let $\cinf(R)$ be the limit, as $N\to\infty$, of the
suprema of $e_R(I)/(d!\length_R(R/I))$ over ideals $I$ with
$\sqrt I=\m$ and $\length_R(R/I)>N$.
We prove that if $R$ is complete, reduced, equidimensional,
and Cohen--Macaulay on the punctured spectrum, with $d\geq2$
and infinite residue field, then every nonzerodivisor $x\in\m$
satisfies
\[
 \cinf(R)\leq
 \max\left\{\cinf(R/xR),\sup_{\fp\ne\m}e(R_{\fp})\right\}.
\]
As an application, we prove the asymptotic Lech conjecture of
Huneke, Ma, Quy, and Smirnov in arbitrary characteristic:
$\cinf(R)=1$ whenever $\widehat R$ has isolated singularities.
This also settles the conjecture of Ma and Smirnov that the
Lech--Mumford constant $\cLM(R)$ is attained when
$\widehat R$ has isolated singularities and $\cLM(R)>1$.
For reduced equidimensional Cohen--Macaulay standard graded
algebras of dimension $\geq2$ over infinite perfect fields,
we prove attainment and rationality of $\cLM$ for the completed
vertex local rings of sufficiently high Veronese subrings.
In characteristic zero, a refinement of our method produces
nonisolated klt affine cones in every dimension $d\geq3$,
with $d-2$-dimensional singular loci, whose completed vertex
local rings have $\cinf=1$ and arbitrarily large $\cLM$.
\end{abstract}

\maketitle

\hypersetup{linkcolor=black}

\section{Introduction}

\subsection{Asymptotic Lech's inequality}

Lech \cite{Lech} studied the relation between the
Hilbert--Samuel multiplicity of an ideal and its colength.  He proved
that for a Noetherian local ring $(R,\m)$ of dimension $d>0$
\[
 e_R(I)\leq d!e(R)\length_R(R/I),
\]
for every $\m$-primary ideal $I\subset R$.  Here
$e_R(I)$ denotes the Hilbert--Samuel multiplicity of $I$, and
$e(R)=e_R(\m)$.
Multiplicity--colength ratios also enter Mumford's local
numerical conditions for asymptotic GIT semistability
\cite{MumfordStability}*{\S3}.

The asymptotic problem asks when the bound for regular local
rings remains valid, up to an arbitrarily small error, for possibly singular
rings: can the factor $e(R)$ be replaced by $1+\eps$, for every
$\eps>0$, uniformly over all $\m$-primary ideals of sufficiently
large colength?

Define the \emph{asymptotic Lech constant} by
\begin{equation}\label{eq:universal-tail-bounds}
 1\leq\cinf(R):=\lim_{N\to\infty}
 \sup_{\substack{\sqrt I=\m\\\length_R(R/I)>N}}
 \frac{e_R(I)}{d!\length_R(R/I)}
 \leq e(R).
\end{equation}
The lower bound follows by considering powers of a fixed
$\m$-primary ideal, and the upper bound from Lech's inequality.
Thus $\cinf(R)=1$ for a regular local ring.
Huneke, Ma, Quy, and Smirnov
\cite{HMQS}*{Conjecture~1.2(a)} conjectured that the same equality
holds whenever $\widehat R$ has isolated singularities.

They proved the conjecture in positive characteristic with
perfect residue field, using Hilbert--Kunz multiplicity
\cite{HMQS}*{Theorem~A(1)}, and in dimension one in arbitrary
characteristic \cite{HMQS}*{Proposition~4.10}.
We settle the conjecture in full generality by a different
method.

\begin{theorem}\label{thm:main}
Let $(R,\m)$ be a Noetherian local ring of dimension $d\geq1$.
If $\widehat R$ has isolated singularities, i.e., $(\widehat R)_{\fp}$ is regular for every nonmaximal
prime $\fp$, then
$\cinf(R)=1$.
Equivalently, for every $\eps>0$ there is an integer
$N=N(R,\eps)$ such that
\[
 e_R(I)\leq d!(1+\eps)\length_R(R/I)
\]
whenever $I$ is $\m$-primary and
$\length_R(R/I)>N$.
\end{theorem}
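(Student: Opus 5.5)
Our plan is to derive \Cref{thm:main} from the hyperplane-section inequality stated in the abstract: for $(R,\m)$ complete, reduced, equidimensional, Cohen--Macaulay on the punctured spectrum, with $d\geq2$ and infinite residue field, every nonzerodivisor $x\in\m$ satisfies
\[
 \cinf(R)\leq\max\left\{\cinf(R/xR),\sup_{\fp\ne\m}e(R_{\fp})\right\}.
\]
We argue by induction on $d$.

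\textbf{Reductions.} First we reduce to the case that $R$ is complete with infinite residue field. Passing to $\widehat R$ preserves lengths of $\m$-primary quotients and multiplicities, and $\m$-primary ideals of $R$ extend to $\m$-primary ideals of $\widehat R$ with the same colength and multiplicity, so $\cinf(R)\leq\cinf(\widehat R)$. Replacing $R$ by $R[t]_{\m R[t]}$, completed, is faithfully flat with trivial closed fiber. It preserves colengths and multiplicities, and it keeps isolated singularities, because the map is flat with regular fibers. Hence we may assume $R$ is complete with infinite residue field and $R_\fp$ is regular for all $\fp\neq\m$; we may also assume $\dim R>0$.

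\textbf{Base case $d=1$.} This case is \cite{HMQS}*{Proposition~4.10}. Alternatively, one can argue directly. Isolated singularities force the localizations at minimal primes to be fields, so $R$ is reduced, and its normalization $\overline R$ is finite since $R$ is complete. Then $\length(R/I)\geq\length(\overline R/I\overline R)-\length(\overline R/R)$, while $e(I)=\length(\overline R/I\overline R)$. Since $\length(\overline R/R)$ is finite, this gives $\cinf=1$.

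\textbf{Inductive step, $d\geq2$.} Here $R$ is a complete local ring whose punctured spectrum is regular. It is therefore Cohen--Macaulay on the punctured spectrum, and reduced by Serre's criterion: $(R_0)$ holds, and $(S_1)$ holds because $\m$ is not associated when $d\geq1$ and every other localization is regular. We also need $R$ equidimensional. We expect to obtain this from connectedness of the punctured spectrum together with regularity, but if $R$ fails to be equidimensional we may have to treat components separately. Multiplicity is additive over the components of maximal dimension, and it is unclear whether $\cinf$ behaves well under this. We regard this as a technical point.

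Now $\sup_{\fp\neq\m}e(R_\fp)=1$, since every such $R_\fp$ is regular. It therefore suffices to find a nonzerodivisor $x\in\m$ such that $R/xR$ again has isolated singularities and $\cinf(R/xR)=1$; the inequality then gives $\cinf(R)\leq1$.

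\textbf{Main obstacle: choosing $x$.} We need $(R/xR)_\fp$ regular for every nonmaximal $\fp\ni x$. This is a local Bertini theorem: a general element $x$ of $\m\setminus\m^2$ avoids all minimal primes, and for each $\fp\neq\m$ containing it, $x\notin\fp^{(2)}R_\fp$, i.e.\ $x$ is part of a regular system of parameters of $R_\fp$. In characteristic $0$ this follows from Flenner's local Bertini theorem for complete local rings with infinite residue field. In arbitrary characteristic, Bertini for regularity can fail for general linear combinations of fixed generators. We would first try to choose $x$ avoiding, for each of the countably many relevant height conditions, a proper ``bad'' subset of $\m/\m^2$, using the Jacobian/differential criterion on the regular locus. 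If that fails, we would instead allow $x$ to range over higher powers $\m^n$ with generic coefficients, following the paper's ``Bertini flags''.

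Once such an $x$ is found, $R/xR$ is complete of dimension $d-1$ with isolated singularities, so induction gives $\cinf(R/xR)=1$, and the theorem follows. The equivalent $\eps$-formulation is just the definition of $\cinf$ as a limit of nonincreasing suprema.
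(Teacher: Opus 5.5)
Your overall route matches the paper's proof. You induct on $d$, reduce to a complete ring with infinite residue field, take the base case $d=1$ from Huneke--Ma--Quy--Smirnov, and for the inductive step apply Theorem~\ref{thm:recurrence} with $\muo(R)=1$ to a local Bertini section.

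The gap is in the reduction to a reduced equidimensional ring, which Theorem~\ref{thm:recurrence} requires. Your Serre-criterion argument is wrong: isolated singularities do not prevent $\m$ from being an embedded associated prime. Take $R=k\fps{x,y,z}/(x^2,xy,xz)$. Here $(x^2,xy,xz)=(x)\cap\m^2$, so $\dim R=2$. Every nonmaximal localization is a localization of $k\fps{x,y,z}/(x)\cong k\fps{y,z}$, hence regular. Yet $x\neq0$ and $x\m=0$, so every element of $\m$ is a zerodivisor and no regular $x$ exists at all. Similarly, $k\fps{x,y,z}/(xy,xz)$ (a plane union a line) has isolated singularities but is not equidimensional, so connectedness cannot give you equidimensionality. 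Your alternative $d=1$ argument has the same flaw: $k\fps{x,y}/(x^2,xy)$ has isolated singularities but is not reduced.

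The missing idea is that you never need $\cinf$ to behave well over components. Let $U$ be the intersection of the minimal primes $P_i$ of $\widehat R$ with $\dim\widehat R/P_i=d$, and put $B=\widehat R/U$.
\begin{itemize}
\item Each $\widehat R_{P_i}$ is a zero-dimensional regular ring, i.e.\ a field. So the associativity formula gives $e(Q)=e_B(QB)$ for every $\m$-primary $Q$.
\item Colength can only drop: $\length(B/QB)\le\length(\widehat R/Q)$.
\item $B$ is reduced, equidimensional, and still has isolated singularities.
\end{itemize}
A tail bound for $B$ with threshold $N_B$ then transfers back to $\widehat R$. If $\length(B/QB)>N_B$, combine the bound with the colength inequality. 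Otherwise Lech's inequality gives $e(Q)\le d!\,e(B)N_B<d!\,\length(\widehat R/Q)$ as soon as $\length(\widehat R/Q)>e(B)N_B$. This is Lemma~\ref{lem:top-reduction}(ii) together with Lemma~\ref{lem:tail-transfer}; the paper applies the infinite-residue-field extension to $B$ afterwards.

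Your worry about Bertini in positive characteristic is unfounded, and the fallback via $\m^n$ is unnecessary. All you need is $x\notin P^{(2)}$ for every nonmaximal $P$.
\begin{itemize}
\item For reduced $B$ this already makes $x$ a nonzerodivisor, since $P^{(2)}=P$ for minimal $P$.
\item At a regular $B_P$ it puts $x$ in a regular system of parameters, so $B_P/xB_P$ is regular in any characteristic. No smoothness over a base field is involved.
\end{itemize}
This is the Flenner--Trivedi local Bertini theorem: Flenner in equal characteristic, Trivedi and Ochiai--Shimomoto in mixed characteristic. The paper uses it with $E=\m$ and $\mathcal Q=\varnothing$.
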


\subsection{Hypersurface sections and Bertini flags}

To prove Theorem \resultref{thm:main}, the main idea of this paper is to study the behaviour of the function $c_{\infty}$ along hypersurface sections. It turns out that one can bound $c_{\infty}(R)$ in terms of $c_{\infty}(R/xR)$ and the multiplicities on the punctured spectrum of $R$, which remains true even when the singular
locus has positive dimension. 
With  this inequality at hand, Theorem \resultref{thm:main} follows immediately from an induction argument.

  We say that a local ring is \emph{Cohen--Macaulay
on the punctured spectrum} if
$R_P$ is Cohen--Macaulay for every $P\ne\m$.  For a complete
reduced equidimensional ring $R$ of dimension
at least two, put
\[
 \muo(R):=\sup\{e(R_P):P\in\Spec R,\ P\ne\m\}.
\]
This is a finite positive integer by
\cite{Nagata}*{Theorem~40.1}; see
Lemma~\resultref{lem:punctured-multiplicity}. The following is the main technical result of this paper.

\begin{theorem}\label{thm:recurrence}
Let $(R,\m,k)$ be a complete reduced equidimensional local ring of
dimension $d\geq2$, with $k$ infinite. Assume that $R$ is
Cohen--Macaulay on the punctured spectrum.  If $x\in\m$ is
$R$-regular and
$S:=R/xR$, then
\[
 \cinf(R)\leq\max\{\cinf(S),\muo(R)\}.
\]
\end{theorem}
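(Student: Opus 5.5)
The plan is to compare $I$ with the family of its ``slices'' along $x$ and to reduce the multiplicity of $I$ to an integral of colengths of ideals in $S$. For an $\m$-primary ideal $I\subset R$ put
\[
 J_j:=\bigl((I:_Rx^j)+xR\bigr)/xR\subset S,\qquad j\geq0 .
\]
This is an increasing chain of $\m_S$-primary ideals with $J_j=S$ for $j\gg0$. Since $x$ is $R$-regular, the standard filtration $I\subset I+x^{n-1}R\subset\cdots$ gives
\[
 \length_R(R/I)=\sum_{j\geq0}\length_S(S/J_j).
\]
Next I would degenerate $I$ to its $x$-initial ideal in the associated graded ring of $(x)$. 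Concretely, I would pass to the extended Rees algebra of $xR$ and use the flat family whose special fibre is $\operatorname{gr}_{xR}(R)\cong S[T]$. Here $\operatorname{gr}_{xR}(R)\cong S[T]$ because $x$ is regular, and the initial ideal is $\bigoplus_j J_jT^j$. Upper semicontinuity of multiplicity in such families gives
\[
 e_R(I)\leq e_{S[T]_{(\m_S,T)}}\Bigl(\textstyle\bigoplus_j J_jT^j\Bigr).
\]
This reduces the problem to ideals of ``$T$-graded'' type. For these, the multiplicity is computed by a Lech-type volume formula: the normalized multiplicity is bounded by $d\sum_j e_S(\text{interpolated }J)$-type terms. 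More precisely, I would establish that it is at most $d!$ times a Riemann sum of $e_S(J_j)/(d-1)!$ against the jumps of the chain, with an error controlled by mixed multiplicities. This is the analogue of the proof of Lech's inequality by induction on dimension.

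With this formula, split the indices $j$ into two groups.
\begin{itemize}
 \item \emph{Long slices}, where $\length_S(S/J_j)>N$. For these, the definition of $\cinf(S)$ gives $e_S(J_j)\leq(d-1)!\,(\cinf(S)+\eps)\,\length_S(S/J_j)$.
 \item \emph{Short slices}, where $\length_S(S/J_j)\leq N$. Their total contribution to the colength is at most $N\cdot\#\{j\}$.
\end{itemize}
If the number of short slices is $o(\length_R(R/I))$, the long slices dominate, and we get $\cinf(R)\leq\cinf(S)$ in the limit.

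The main obstacle is the opposite regime: the chain has very many short slices, so that $I$ looks like $\mathfrak a+x^nR$ with $n$ huge and $\mathfrak a$ of bounded colength modulo $x$. Such ideals concentrate along the curve-like locus where $x$ has high order. Heuristically their ratio is governed by multiplicities at nonmaximal primes $P$ through which this locus passes. This is where the hypotheses enter:
\begin{itemize}
 \item completeness and reducedness, via Lemma~\resultref{lem:punctured-multiplicity}, give $\muo(R)<\infty$;
 \item Cohen--Macaulayness of $R_P$ gives $e(R_P)=\length(R_P/\mathfrak q R_P)$ for parameter ideals $\mathfrak q$.
\end{itemize}
My first attempt would be to choose (using $k$ infinite) general elements $y_1,\dots,y_{d-1}$ forming a reduction of the relevant ideals modulo $x$. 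I would then estimate the short-slice part of $e_R(I)$ by $e_R\bigl((y_1,\dots,y_{d-1},x^n)\bigr)$-type quantities. Using the associativity formula over the one-dimensional primes $P$ minimal over $(y_1,\dots,y_{d-1})$, these are bounded by $\sum_P e(R_P)\,\length(R/(P+x^nR))\cdot(\ldots)$, and hence by $\muo(R)$ times the corresponding colength.

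Finally, I would combine the two estimates. Interpolating between the regimes gives, for every $\eps>0$ and all $I$ of sufficiently large colength,
\[
 e_R(I)\leq d!\,\bigl(\max\{\cinf(S),\muo(R)\}+\eps\bigr)\,\length_R(R/I).
\]
Letting $N\to\infty$ yields the theorem.
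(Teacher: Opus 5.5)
There is a genuine gap. Your outer bookkeeping is sound and matches the paper: the slices $J_j$, the identity $\length_R(R/I)=\sum_j\length_S(S/J_j)$, and the use of $\cinf(S)$ on the initial run of long slices are exactly how the paper reduces the theorem to an estimate for ideals with bounded slices. That bounded-slice estimate is the missing piece, and your degeneration step makes it unreachable.

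\textbf{The degeneration loses $\muo(R)$.} Your inequality $e_R(I)\le e(\operatorname{in}I)$ is correct, since $\operatorname{in}(I)^m\subseteq\operatorname{in}(I^m)$ and colength is preserved. But in $S[T]$ the multiplicity along the $T$-axis is $e(S)$, so $\muo(R)$ no longer appears, and any argument that passes through $e(\operatorname{in}I)$ cannot prove the theorem. For example, take $R=k\fps{u,v,w}/(u^3+v^3+w^3)$ with $k$ infinite, $\operatorname{char}k\ne3$, and $x=w$.
\begin{itemize}
\item Here $\muo(R)=1$, and the reduced curve $S=k\fps{u,v}/(u^3+v^3)$ has $\cinf(S)=1$, so the bound to be proved is $1$.
\item The prime $\mathfrak a=(u+w,v)$ has $R/\mathfrak a\cong k\fps{w}$. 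So $I_n=\mathfrak a+w^nR$ has colength $n$, with $I_n:w^j=\mathfrak a+w^{n-j}R$ and $J_j=\mathfrak m_S$ for $j<n$.
\item Hence $\operatorname{in}(I_n)=\mathfrak m_S\,S[T]+T^nS[T]$ has multiplicity $n\,e(S)=3n$, giving normalized ratio $3/2$ at every colength.
\end{itemize}
The same example shows that your slice-by-slice Riemann sum of $e_S(J_j)/(d-1)!$ only yields ratio $e(S)/(d-1)!=3$. In $R$ that sum is just the iterated one-step inequality $e_R(K)-e_R(K:x)\le d\,e_S((K+(x))/(x))$, which the paper uses with no degeneration. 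It suffices for long slices, but short slices need a per-step bound that beats $e_S(J_j)$.

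\textbf{Your short-slice sketch does not give $\muo(R)$.} Bounding $e_R(I)$ by $e_R((\mathbf y,x^n))$ requires $(\mathbf y,x^n)\subseteq I$ and compares against the wrong colength. Moreover, if $\mathbf y$ lifts a reduction of $J$, associativity gives
$e_R((\mathbf y,x^n))=n\sum_P\length(R_P/(\mathbf y)R_P)\,e(x;R/P)=n\,e_S(J)$.
That is the slice bound again: $3n$ against colength $n$ in the example. Cohen--Macaulayness of $R_P$ only gives $\length(R_P/(\mathbf y)R_P)=e((\mathbf y)R_P)$, which $e(R_P)$ does not control.

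The factor $\muo(R)$ can only come from Lech's inequality $e(L_{\mathfrak q})\le(d-1)!\,e(R_{\mathfrak q})\,\length(R_{\mathfrak q}/L_{\mathfrak q})$, combined with associativity along $x$. Here $\mathfrak q$ runs over the generic points of a \emph{one-dimensional} ideal $L$ with $\length(R/(L+xR))=\beta$. An $\mathfrak m$-primary ideal has no such localizations, and the lifts vary with $I$, so you need a uniform limiting object. The paper supplies it in four steps:
\begin{itemize}
\item It splits the bounded tail into at most $C$ constant blocks.
\item It bounds each step via Teissier's inequality: $e_R(K_j)-e_R(K_{j+1})\le d\,D_1(K_j,K_{j+1})$.
\item It charges the at most $2H$ steps near the ends of each block to a bounded error.
\item It handles interior steps by contradiction in the catapower $R_\#$. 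There $\catlim K_{n,j_n}=\catlim K_{n,j_n+1}=L$, $R_\#/L$ is one-dimensional Cohen--Macaulay, $\length(R_\#/(L+xR_\#))=\beta$, and $e((R_\#)_{\mathfrak q})\le\muo(R)$. A noncancellation argument on compatible curve sections then bounds the limiting $D_1$ by $\omega_x(L)\le(d-1)!\,\muo(R)\,\beta$.
\end{itemize}
Your proposal contains neither this machinery nor a substitute for it. So your final interpolation step has nothing to work with in exactly the regime where the theorem has content.
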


A priory, no reducedness or Cohen--Macaulay hypothesis is imposed on
$S$.  But we may carefully choose $x\in\m$ so that the estimate can be iterated.  For this purpose, we  introduce the following terminology.

\begin{definition}
Let $R$ be a complete reduced equidimensional
Cohen--Macaulay local ring.
An \emph{admissible Bertini flag} is a sequence
\[
 \mathcal F=(R_0,x_0;R_1,x_1;\ldots;
 R_{t-1},x_{t-1};R_t)
\]
with $R_0=R$, $R_{i+1}=R_i/x_iR_i$, such that every $R_i$ is
complete, reduced, equidimensional, and Cohen--Macaulay, every $x_i$
 is $R_i$-regular, and $R_t$ has isolated
singularities, of dimension $\ge 1$. 
\end{definition}

Notice that the empty flag is allowed when $R$ itself has
isolated singularities.  We define
\[
 \Lambda(\mathcal F):=
 \max\{1,\muo(R_0),\ldots,\muo(R_{t-1})\}.
\]

\begin{theorem}\label{thm:flag-bound}
Let $(R,\m,k)$ be a complete reduced equidimensional
Cohen--Macaulay local ring of positive dimension, with $k$
 infinite.  Then $R$ admits an admissible Bertini flag, and every
such flag satisfies
\[
 \cinf(R)\leq\Lambda(\mathcal F).
\]
In fact, a flag may be chosen with at most $s=\dim\Sing R$
sections.
\end{theorem}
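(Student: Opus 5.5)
The proof has two parts: existence of a short admissible Bertini flag, and the bound along any flag. I would prove the bound first, since it only needs Theorem~\resultref{thm:recurrence} and Theorem~\resultref{thm:main}.

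Let $\mathcal F=(R_0,x_0;\ldots;R_t)$ be an admissible flag. Since each $x_i$ is $R_i$-regular and $R_i$ is Cohen--Macaulay, we have $\dim R_i=\dim R-i$, and $\dim R_t\ge1$ by definition, so $\dim R_i\ge2$ for every $i<t$. The residue fields do not change along the flag, so they stay infinite. Each $R_i$ with $i<t$ is complete, reduced and equidimensional, and it is Cohen--Macaulay, hence Cohen--Macaulay on the punctured spectrum. Theorem~\resultref{thm:recurrence} therefore applies to each $R_i$ and gives
\[
 \cinf(R_i)\le\max\{\cinf(R_{i+1}),\muo(R_i)\}.
\]
The last ring $R_t$ is complete with isolated singularities and has positive dimension, so Theorem~\resultref{thm:main} gives $\cinf(R_t)=1$. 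Descending induction on $i$ from $t$ to $0$ then yields $\cinf(R)\le\max\{1,\muo(R_0),\ldots,\muo(R_{t-1})\}=\Lambda(\mathcal F)$. If Theorem~\resultref{thm:main} is itself deduced from this statement, one should avoid circularity by proving the one-dimensional case directly (it is \cite{HMQS}*{Proposition~4.10}) and using the empty flag for isolated singularities; I take Theorem~\resultref{thm:main} as available.

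For existence, induct on $s=\dim\Sing R$, with $\dim\Sing R=-\infty$ when $R$ is regular. If $s\le0$, then $R$ has isolated singularities and the empty flag works. If $s\ge1$, then $\dim R\ge s+1\ge2$, because the singular locus is a proper closed subset of the reduced ring $R$. We then need $x\in\m$ such that:
\begin{enumerate}[label=(\roman*)]
\item $x$ is $R$-regular;
\item $S=R/xR$ is reduced;
\item $\dim\Sing S\le s-1$.
\end{enumerate}
Equidimensionality and the Cohen--Macaulay property of $S$ are automatic. Completeness is also automatic, since $S$ is a quotient of a complete ring.

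For (i) and (iii), choose $x$ avoiding finitely many primes: the minimal primes of $R$, and the minimal primes of $\Sing R$ that are not equal to $\m$. The latter set is closed because $R$ is excellent. Prime avoidance over the infinite field $k$ lets us take $x$ to be a general element of $\m$, or of $\m^2$ if that helps.

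Reducedness of $S$ is the main obstacle. Since $S$ is Cohen--Macaulay, Serre's criterion shows that $S$ is reduced once it satisfies $(R_0)$, i.e., $S_Q$ is regular at each minimal prime $Q$ of $S$. Such a $Q$ has height one in $R$. If $Q\notin\Sing R$, then $R_Q$ is regular and we need $x\notin Q^{(2)}R_Q$. This is a Bertini-type statement for a general element in a local ring, in the spirit of Flenner's local Bertini theorem in characteristic zero or of Trivedi's generic-element results. If $Q\in\Sing R$, then $Q$ would be a component of $V(x)\cap\Sing R$; this must be excluded. It can be, because $\Sing R$ has height at least one and $x$ avoids its minimal primes. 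Hence $\dim(\Sing R\cap V(x))<s$, and all height-one primes of $V(x)$ lie outside $\Sing R$.

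The delicate point is to ensure, in arbitrary characteristic, that a general $x$ is not in the symbolic square at generic points of $V(x)$. I would first try a general $k$-linear combination of generators of $\m$ and use an openness argument on the relevant blow-up or conormal data. Once reducedness holds, we have $\Sing S\subseteq(\Sing R\cap V(x))\cup\Sing(V(x)\cap\operatorname{Reg}R)$. We must then arrange that the second set also has dimension below $s$. I would do this by the same generic-smoothness-type statement applied away from $\m$, since $\Sing S$ need only have dimension at most $s-1$.
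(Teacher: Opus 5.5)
The bound along a given flag is exactly the paper's argument: Theorem~\resultref{thm:recurrence} applied at each $R_i$ with $i<t$, Theorem~\resultref{thm:main} at $R_t$, then descending induction. There is no circularity, since Theorem~\resultref{thm:main} is proved directly from Theorem~\resultref{thm:recurrence}. Your skeleton for existence is also that of Lemma~\resultref{lem:bertini-flag-step}:
\begin{itemize}
\item avoid the generic points of the positive-dimensional components of $\Sing R$;
\item get reducedness of $S$ from $(R_0)+(S_1)$, using your height-one argument;
\item get $\dim(\Sing R\cap V(x))\le s-1$ from catenarity of the complete domains $R/Q$.
\end{itemize}

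The genuine gap is the step you call ``delicate'' and leave as a plan. You need some $x\in\m$ with $x\notin P^{(2)}$ at the relevant nonmaximal primes $P$, in arbitrary characteristic. This is not a prime-avoidance condition. The primes involved (the height-one primes over $x$, and the regular primes containing $x$) are infinitely many and depend on $x$. So ``a general $k$-linear combination plus an openness argument on blow-up or conormal data'' is not a proof: it is precisely the nontrivial content of a local Bertini theorem. Both your (ii) and the second half of your (iii) rest on it, and your suggestion that Flenner's theorem is a characteristic-zero result leaves the positive and mixed characteristic cases unsupported.

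The missing input is the Flenner--Trivedi local Bertini theorem, which is characteristic-free, in the form \eqref{eq:FT}. For any local ring $(A,\n)$, ideal $E\subseteq\n$, and finite set $\mathcal Q$ of primes not containing $E$, some $x\in E$ lies outside $\bigcup_{Q\in\mathcal Q}Q$ and outside $P^{(2)}$ for every $P$ with $E\nsubseteq P$. Take $E=\m$ and $\mathcal Q$ the generic points of the positive-dimensional components of $\Sing R$. A single such $x$ then gives all three properties:
\begin{itemize}
\item Since $P^{(2)}=P$ at minimal primes of the reduced ring $R$, $x$ is $R$-regular, which is (i).
\item At a minimal prime $Q$ of $S$, $R_Q$ is regular by your height argument, and $x\notin Q^{(2)}$ makes $S_Q$ a field, which is (ii).
\item At any nonmaximal $P\supseteq(x)$ with $R_P$ regular, $x\in PR_P\setminus P^2R_P$ makes $R_P/xR_P$ regular.
\end{itemize}
By the last point, every nonmaximal singular prime of $S$ lies over a point of $V(x)\cap\Sing R$. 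So your separate generic-smoothness step away from $\m$ is unnecessary, and (iii) follows from the catenarity bound you already have.
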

The displayed inequality follows immediately from Theorem \resultref{thm:recurrence}. The existence of admissible Bertini flags follows by repeatedly applying the Flenner--Trivedi local Bertini theorem
\cites{Flenner,Trivedi,TrivediErratum}; see Lemma \resultref{lem:bertini-flag-step}.
\begin{remark}
Taking the minimum over all admissible flags, we obtain
\[
 \cinf(R)\leq\Lambda_{\mathrm{Bert}}(R)
 :=\min_{\mathcal F\ \mathrm{admissible}}
       \max\{1,\muo(R_0),\ldots,\muo(R_{t-1})\}\leq e(R).
\]
The last inequality follows by choosing each section superficial for
the maximal ideal (which uses the assumption that $k$ is infinite), so that $e(R_i)=e(R)$; see
Lemmas~\resultref{lem:punctured-multiplicity}
and~\resultref{lem:bertini-flag-step}.
\end{remark}

\begin{remark}
Ma and Smirnov \cite{UniformLech}*{Theorem~3} proved the following
uniform improvement of Lech's inequality.  If $d\geq2$ and
$e((\widehat R)_{\mathrm{red}})>1$, then there is a real number
$\delta=\delta(R)>0$ such that
\[
 e_R(I)\leq d!\bigl(e(R)-\delta\bigr)\length_R(R/I)
 \qquad(\sqrt{I}=\m),
\]
 uniformly over all
$\m$-primary ideals, but the constant $\delta(R)$ is not explicit.
In the situation of Theorem~\resultref{thm:flag-bound},
each admissible flag $\mathcal F$ determines an explicit asymptotic
coefficient:
 for every $\eps>0$, there is an integer $N=N(R,\mathcal F,\eps)$
such that
\[
 e_R(I)\leq d!\bigl(\Lambda(\mathcal F)+\eps\bigr)\length_R(R/I)
 \qquad\bigl(\sqrt{I}=\m,\ \length_R(R/I)>N\bigr).
\]
When $\Lambda(\mathcal F)<e(R)$, this improves the
classical coefficient $e(R)$ for sufficiently large colength.  Not that, however, this
condition alone does not imply
$\Lambda(\mathcal F)<e(R)-\delta(R)$.
\end{remark}

\begin{example}\label{ex:intro-balanced}
Let $k$ be an infinite field, let $a,b\geq2$, and put $m=a+b$.
If
$\operatorname{char}k=p>0$ assume further that $p\nmid mab$.
Set
$
 R=k\fps{x,y,z,w}/(x^m+y^az^b+w^m),
$
the section $y+z$ defines an admissible flag $\mathcal F$ with
\[
 \cinf(R)\leq\Lambda(\mathcal F)=\max\{a,b\}<m=e(R).
\]
In particular, taking $a=\lfloor m/2\rfloor$ and
$b=\lceil m/2\rceil$ gives
$\cinf(R)\leq\lceil m/2\rceil$.
See
Proposition~\resultref{prop:balanced-hypersurface} for the details.
\end{example}

The following example shows that the term $\muo(R)$ in Theorem~\resultref{thm:recurrence} cannot be
omitted, even for a flag of length one.

\begin{example}\label{ex:intro-ordinary}
Let $k$ be an infinite field and let
$\lambda_1,\ldots,\lambda_n\in k$ be distinct, with $n\geq3$.  Set
\[
 R=k\fps{u,v,z}\big/\Bigl(\prod_{j=1}^n(u-\lambda_jv)\Bigr).
\]
Then $z$ defines an admissible flag $\mathcal F$, and
$
 \cinf(R/zR)=1,\,
 \frac n2\leq\cinf(R)\leq\Lambda(\mathcal F)=n.
$
Thus $\cinf(R)$ is unbounded in this family, although the section
always has asymptotic Lech constant $1$.
See Proposition~\resultref{prop:ordinary-multiple-point} for the proof.
\end{example}

\subsection{Attainment of the Lech--Mumford constant}

Mumford studied the optimal constant in Lech's inequality in
connection with the singularities occurring in GIT theory.  His multiplicity--colength inequalities give
necessary local conditions for asymptotic GIT semistability
\cite{MumfordStability}*{\S3}.
As in Ma and Smirnov \cite{MaSmirnov}, for $d>0$ we define the
\emph{Lech--Mumford constant} by
\[
 \cLM(R)=\sup_{\sqrt{I}=\m}
 \frac{e_R(I)}{d!\length_R(R/I)}.
\]
 The two constants satisfy
\[
 1\leq\cinf(R)\leq\cLM(R)\leq e(R).
\]

\begin{remark}\label{rem:power-series-tail}
Let $(R,\m)$ be any Noetherian local ring, and set $d=\dim R$.  We
observe that
\[
 \cLM(R\fps{T})=\cinf(R\fps{T}).
\]
Hence $R$ is
semistable in Mumford's sense \cite{MaSmirnov}*{Definition~2.2.1} if
and only if $\cinf(R\fps{T})=1$.
Indeed, for $q\geq1$, the local map
$\varphi_q\colon A_0=R\fps{U}\to A_1=R\fps{T}$, defined by
$U\mapsto T^q$, is finite free of rank
$q$.  If $J\subset A_0$ is $(\m,U)$-primary and
$J_q=\varphi_q(J)A_1$, then
\[
 \length_{A_1}(A_1/J_q^s)
 =q\length_{A_0}(A_0/J^s)\qquad(s\geq1).
\]
This immediately implies $\length_{A_1}(A_1/J_q)
 =q\length_{A_0}(A_0/J)$ and $e(J_q)=qe(J)$.
Thus every normalized ratio occurs at arbitrarily large colength.
Combining this observation with
\cite{HMQS}*{Lemma~5.10}, we obtain
\[
 \cLM(R\fps{T})
 =\lim_{N\to\infty}
 \sup_{\substack{\sqrt I=(\m,T)\\ I\subseteq(\m,T)^N}}
 \frac{e_{R\fps{T}}(I)}{(d+1)!\length_{R\fps{T}}(R\fps{T}/I)},
\]
answering \cite{MaSmirnov}*{Question~3.3.4}. 
\end{remark}

A basic question is whether the supremum defining $\cLM(R)$ is
attained by an ideal. 
Ma and Smirnov confirmed this in a few cases \cite{MaSmirnov}*{Propositions~3.3.3 and~3.3.7} and conjectured that this holds whenever
$\widehat R$ has isolated singularities and $\cLM(R)>1$
\cite{MaSmirnov}*{Conjecture~3.3.1}.  

We can immediately deduce from
Theorems~\resultref{thm:main} and~\resultref{thm:flag-bound} the 
following result.

\begin{samepage}
\begin{theorem}\label{cor:LM-introduction}
Let $(R,\m)$ be a Noetherian local ring of dimension $d>0$.
Suppose that one of the following conditions holds:
\begin{enumerate}[label=\textup{(\roman*)}]
\item $\widehat R$ has isolated singularities and
$\cLM(R)>1$.
\item $R$ is complete, reduced, equidimensional, and
Cohen--Macaulay, with infinite residue field, and
$\cLM(R)>\Lambda(\mathcal F)$ for some admissible Bertini flag
$\mathcal F$.
\end{enumerate}
Then there is an ideal $I_0$ with $\sqrt{I_0}=\m$ such that
\[
 \cLM(R)=\frac{e_R(I_0)}{d!\length_R(R/I_0)}\in\mathbb Q.
\]
\end{theorem}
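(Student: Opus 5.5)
In both cases the plan is to show $\cinf(R)<\cLM(R)$ and then deduce attainment by a finiteness argument. First reduce (i) to the complete case. The ratio $e_R(I)/(d!\length_R(R/I))$ is unchanged under $I\mapsto I\widehat R$. Moreover, $\m$-primary ideals of $R$ correspond bijectively, and preserving colength, to $\widehat\m$-primary ideals of $\widehat R$, and multiplicities agree. Hence $\cLM(R)=\cLM(\widehat R)$, $\cinf(R)=\cinf(\widehat R)$, and an ideal attaining the supremum for $\widehat R$ contracts to one attaining it for $R$. In case (i), Theorem~\resultref{thm:main} gives $\cinf(R)=1<\cLM(R)$. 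In case (ii), Theorem~\resultref{thm:flag-bound} gives $\cinf(R)\leq\Lambda(\mathcal F)<\cLM(R)$. In either case, set $\delta:=\cLM(R)-\cinf(R)>0$.

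By the definition of $\cinf$ as a limit of nonincreasing suprema, choose $N$ such that every $I$ with $\sqrt I=\m$ and $\length_R(R/I)>N$ satisfies
\[
 \frac{e_R(I)}{d!\length_R(R/I)}\leq\cinf(R)+\delta/2<\cLM(R).
\]
The supremum defining $\cLM(R)$ therefore equals the supremum over ideals with $\length_R(R/I)\leq N$. For each such ideal, $e_R(I)$ is a positive integer bounded by $d!e(R)N$ by Lech's inequality. Hence the ratio lies in the finite set
\[
 \Bigl\{\frac{a}{d!\,\ell}: 1\leq a\leq d!e(R)N,\ 1\leq\ell\leq N\Bigr\}.
\]
A supremum over a nonempty subset of a finite set is a maximum. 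So some $I_0$ attains it, and the value is rational.

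The only delicate points are bookkeeping. We must use that the colength-bounded family is nonempty (take $I=\m$), and that the strict inequality $\cinf<\cLM$ excludes the large-colength tail. We also need the completion compatibility in (i): $e_{\widehat R}(I\widehat R)=e_R(I)$ because $\widehat R$ is flat with $\widehat R/I\widehat R^n\cong R/I^n$. I expect no real obstacle, since all depth lies in Theorems~\resultref{thm:main} and~\resultref{thm:flag-bound}. Note that (ii) needs the completeness hypotheses only to invoke Theorem~\resultref{thm:flag-bound} directly, with no reduction required.
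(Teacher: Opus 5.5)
Your proof is correct and is essentially the paper's argument: Theorems~\resultref{thm:main} and~\resultref{thm:flag-bound} give $\cinf(R)<\cLM(R)$. The finiteness argument of Proposition~\resultref{prop:attainment-above-tail}, using Lech's bound on $e_R(I)$ for colength at most $N$, then forces the supremum to be a maximum of finitely many rationals; your preliminary passage to $\widehat R$ in case (i) is harmless but unnecessary, since Theorem~\resultref{thm:main} already applies to $R$ itself under the hypothesis on $\widehat R$.
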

\end{samepage}

The case (i) settles the conjecture in full generality.  Notice that the condition
$\cLM(R)>1$ is essential; see
\cite{MaSmirnov}*{Remark~3.3.2(1)}.

The criterion in part~\textup{(ii)} can be verified for Veronese
subrings without first determining $\cLM$.
For a standard graded algebra $A$, write
$A^{(n)}=\bigoplus_{j\geq0}A_{nj}$, with the grading indexed by $j$.

\begin{theorem}\label{thm:veronese-attainment}
Let $k$ be an infinite perfect field, and let $A$ be a standard
graded reduced equidimensional Cohen--Macaulay $k$-algebra of
dimension $d\geq2$.  Set
\[
 R_n=\widehat{(A^{(n)})_{A^{(n)}_+}}.
\]
If $n>d!$, then $\cLM(R_n)$ is attained and rational.  If $A$
is normal and $d\geq3$, the same conclusion holds whenever
$n^2>d!$.
\end{theorem}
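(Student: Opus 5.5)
The plan is to reduce to criterion~(ii) of Theorem~\resultref{cor:LM-introduction}. We produce an admissible Bertini flag $\mathcal F$ for $R_n$ with $\Lambda(\mathcal F)$ small, and an ideal witnessing $\cLM(R_n)>\Lambda(\mathcal F)$.

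First, the hypotheses of (ii) hold. Since $A$ is reduced, equidimensional and Cohen--Macaulay, so is the Veronese $A^{(n)}$: it is a direct summand, and depth and dimension are preserved for Veronese subrings of CM graded rings via the local cohomology grading. Its completion at the vertex is then reduced, because $k$ is perfect so excellence keeps reducedness; it is also equidimensional and CM. The residue field $k$ is infinite.

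For the lower bound on $\cLM(R_n)$, test with the ideal $I=\m_n=(A^{(n)}_+)$. We have $e_{R_n}(\m_n)=n^{d-1}e(A)$ and $\length(R_n/\m_n)=1$, so
\[
 \cLM(R_n)\geq \frac{n^{d-1}e(A)}{d!}.
\]
A better test is $I=\m_n^2$ or powers of the image of $A_+^{n}$ in a suitable sense, but the simple bound will do.

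For the upper bound on $\Lambda(\mathcal F)$, build the flag from general linear forms of $A^{(n)}$, i.e.\ general degree-$n$ forms of $A$, via Lemma~\resultref{lem:bertini-flag-step}. The key estimate is $\muo(R_i)$. A nonmaximal prime of $R_i$ corresponds to a point of the punctured cone over a projective section $X_i\subset\operatorname{Proj}A$ cut out by $i$ general degree-$n$ hypersurfaces. Local multiplicities on the punctured spectrum of a cone are local multiplicities of the projective variety (times a smooth factor). So $\muo(R_i)$ is bounded by the maximal multiplicity of points of $X=\operatorname{Proj}A$ along subvarieties, which is independent of $n$. For a point of $X$, the multiplicity is at most $e(A)$; more precisely $\muo\le e(A)$, since the local multiplicity at a point of a projective variety of degree $e(A)$ is at most the degree. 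Thus $\Lambda(\mathcal F)\le e(A)$. The condition $n^{d-1}e(A)/d!>e(A)$ then needs $n^{d-1}>d!$, which is implied by $n>d!$ when $d\ge2$.

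In the normal case with $d\geq3$, $\Sing X$ has codimension $\geq2$. The local multiplicity of a general section at a singular point can then be bounded using the degree of $\Sing$-related data, or one needs a sharper test ideal giving $n^2$ rather than $n^{d-1}$. I am unsure which; presumably the paper's bound uses $\muo\le e(A)$ together with the test ideal $\m_n$ restricted to the $n^2$ growth coming from a two-step flag. The main obstacle is exactly this: rigorously bounding $\muo$ of the completed Veronese cones and of their general sections uniformly in $n$. This requires passing between local multiplicities of $\widehat{A^{(n)}}$ at nonmaximal primes and multiplicities on $\operatorname{Proj}A$. I would first check this for the uncompleted graded ring, using that localization at homogeneous primes of a cone is a Laurent extension of the projective local ring, and then pass to the completion using analytic unramifiedness and the fact that multiplicity is preserved under flat local maps with regular fibers (excellence).
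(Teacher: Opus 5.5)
Your reduction to criterion (ii) of Theorem~\resultref{cor:LM-introduction}, and the lower bound $\cLM(R_n)\geq n^{d-1}e(A)/d!$ coming from the maximal ideal, are exactly what the paper uses. The gap is the claim that $\Lambda(\mathcal F)\le e(A)$ uniformly in $n$.

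The Laurent-extension and completion comparison only identifies $\muo(R_i)$ with the maximal local multiplicity of the section $X_i=\Proj C_i$, where $C_i=A/(f_1,\ldots,f_i)$. Your sentence ``So $\muo(R_i)$ is bounded by the maximal multiplicity of points of $X$'' does not follow from it. The bound that does come for free is Nagata's inequality, i.e.\ \eqref{eq:veronese-punctured-bound} applied to $C=C_i$. It gives $\muo(R_i)\le e(C_i)=n^ie(A)$, which grows with $n$.

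To pass from $X_i$ back to $X$, you need the sections not to raise the local multiplicity at any point. This fails for special sections: if the tangent hyperplane of $H$ at $y$ contains a component of the tangent cone $C_yX$, then $e_y(X\cap H)>e_y(X)$. When $\Sing X$ is positive dimensional, every section meets it.

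Lemma~\resultref{lem:bertini-flag-step} does not supply this either. Its superficiality concerns only the maximal ideal, so it gives $e(B)=e(A)$ at the vertex and says nothing about $\muo(B)$. It also produces nonhomogeneous elements of the completion, which destroys the cone structure your comparison relies on.

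Your uncertainty in the normal case is a symptom of this gap. If your uniform bound were available, that case would be immediate, since $n^{d-1}\geq n^2$ for $d\geq3$.

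The paper accepts the crude bound and counts sections instead. Graded Bertini with general degree-$n$ forms (using that $k$ is infinite and perfect) gives $C_i^{(n)}\cong A^{(n)}/(f_1,\ldots,f_i)$, with $\Proj C_t$ smooth after $t\le d-1$ steps. When $A$ is normal and $d\geq3$, one has $\dim\Sing(\Proj A)\le d-3$, so $t\le d-2$. Since $\Lambda(\mathcal F)$ omits the terminal ring,
\[
\Lambda(\mathcal F_n)\le\max_{i<t}n^ie(A)=n^{t-1}e(A).
\]
Comparing with $n^{d-1}e(A)/d!$ gives exactly $n>d!$, and $n^2>d!$ in the normal case; this is where the $n^2$ comes from.

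Your sharper route can probably be completed, but it needs a genuine argument, for example the following.
\begin{enumerate}[label=\textup{(\arabic*)}]
\item Let $h$ be an order-one nonzerodivisor in a Cohen--Macaulay local ring $B$ whose initial form avoids the minimal primes of $\operatorname{gr}_{\mathfrak n}B$. Completing $h$ to a minimal reduction of $\mathfrak n$ shows $e(B/hB)=e(B)$.
\item At each point $y$, the members of $\lvert\mathcal O_X(n)\rvert$ through $y$ violating this form a union of linear subspaces of codimension at least $\dim X$.
\item A dimension count over $X$, with the usual constructibility and base-change bookkeeping, then shows that a general section preserves all local multiplicities.
\end{enumerate}
Iterating would give $\Lambda(\mathcal F_n)\le e(A)$ and the stronger threshold $n^{d-1}>d!$, which covers both assertions. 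As written, however, the decisive step is only asserted.
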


\begin{remark}
More generally, if $\Sing(\Proj A)\ne\varnothing$ has codimension
$c$ in $\Proj A$, the same conclusion holds whenever $n^c>d!$.
\end{remark}

The multiplicity at the vertex grows as $n^{d-1}$, whereas a
suitable Bertini flag bounds $\cinf(R_n)$ by a polynomial of
smaller degree.  This gives the strict gap required in
Theorem~\resultref{cor:LM-introduction}\textup{(ii)}.
We remark that the rings $R_n$ need not be Gorenstein or have isolated
singularities; see Example~\resultref{ex:non-gorenstein-veronese}.

\subsection{Nonisolated klt cones}

A refinement of the proof of Theorem~\resultref{thm:recurrence} (see Variant \resultref{var:LM-recurrence} and Section ~\resultref{subsec:del-pezzo-cones})
gives the following family of nonisolated klt singularities with
$\cinf=1$, in every dimension at least three.

\begin{example}[Cones over del Pezzo products]
\label{ex:del-pezzo-cones}
Let $k$ be an algebraically closed field of characteristic zero,
let $S$ be a singular del Pezzo surface with only Du Val
singularities, and let $Z$ be a smooth Fano variety of dimension
$d-3$, where $d\geq3$; for $d=3$, take $Z=\Spec k$.
Put $Y=S\times Z$, and choose $q\geq1$ so that
$L=-qK_Y$ is very ample and its section ring
\[
 A=\bigoplus_{j\geq0}H^0(Y,L^{\otimes j})
\]
is generated in degree one.  For $n\geq1$, let $R_n$ be the
completed local ring at the vertex of $\Spec A^{(n)}$.
These cones are normal and klt, and
\[
 \dim\Sing R_n=d-2,\qquad
 \cinf(R_n)=1,\qquad
 \lim_{n\to\infty}\cLM(R_n)=\infty.
\]
In fact, $\Lambda_{\mathrm{Bert}}(R_n)=2$ for every $n$, so
the equality $\cinf(R_n)=1$ improves on the Bertini-flag bound.
The proof is given in Section~\ref{subsec:del-pezzo-cones}.
\end{example}

\subsection{Outline of the proof of Theorem~\texorpdfstring{\resultref{thm:recurrence}}{\ref{thm:recurrence}}}

The proof proceeds by considering the colon
filtration of an $\m$-primary ideal $I$,
\[
 I\subseteq I:x\subseteq I:x^2\subseteq\cdots.
\]
For the stages whose images in $S=R/xR$ have large
colength, we use the one-step inequality of Huneke--Ma--Quy--Smirnov
\cite{HMQS}*{Lemma~5.1}.  Our main estimate treats the complementary
case: for every fixed $C\geq1$ and $\eps>0$,
\[
 e_R(I)\leq d!\bigl(\muo(R)+\eps\bigr)\length_R(R/I)
\]
whenever $\length_S(S/IS)\leq C$ and
$\length_R(R/I)$ is sufficiently large; see
Proposition~\resultref{prop:bounded-slice-estimate}.

The difficulty lies in intervals on which the images of
the colon ideals in $S$ are constant.  Their lengths can be
arbitrarily large, even if the common colength is bounded, so an
additive error at each step cannot be ignored.

We prove a uniform estimate for steps sufficiently far
from both ends by contradiction, using the compactness principle
underlying ultraproducts; see
Proposition~\resultref{prop:constant-block-estimate}.
We pass to a catapower $R_\#$ of $R$, the complete Noetherian local
ring obtained by quotienting an ultrapower of $R$ by its
$\m$-adic infinitesimals.  In a sequence of counterexamples, the
ultraproducts of the adjacent colon ideals $I_n$ and $I_n:x$ have
the same image in $R_\#$:
\[
 \catlim I_n=\catlim(I_n:x)=L.
\]

Moreover, $R_\#/L$ is one-dimensional Cohen--Macaulay
(Lemma~\resultref{lem:interior-block-limit}), and its generic points lie
over the punctured spectrum of $R$.

The equality of these limits means that the multiplicity
drop cannot be read directly from the catapower.
The main new ingredient is a noncancellation argument on compatible
one-dimensional sections that bounds the original drops in terms of the
limiting curve; see Section \ref{sec:branch-noncancellation}.  We can then apply
Lech's inequality in the ambient localizations at the generic points of
the limiting curve, where the multiplicities are bounded by
$\muo(R)$.  Together with the associativity formula, this contradicts
the choice of counterexamples.  Summing along the filtration proves the
displayed estimate, since the contributions near the ends are uniformly
bounded.

\subsection{Notation and conventions}

Unless explicitly stated otherwise, all rings are commutative and
Noetherian; the ultraproducts constructed below need not be Noetherian.  We write
$\length_A(-)$ for $A$-module length.  Throughout, ``primary'' means primary to
the maximal ideal of the ambient local ring.

Let $(A,\mathfrak a)$ be local, let $M$ be a nonzero finitely
generated $A$-module of dimension $t$, and let $I\subseteq\mathfrak a$
be an ideal such that $\length_A(M/IM)<\infty$.  We write
$e_A(I;M)$ for the Hilbert--Samuel multiplicity
\[
 e_A(I;M)=\lim_{q\to\infty}
 \frac{t!\,\length_A(M/I^qM)}{q^t}.
\]
When $M=A$, we write $e_A(I)=e_A(I;A)$, and we put
$e(A)=e_A(\mathfrak a)$.  We omit the ring subscript when the
ambient ring is clear.  If
$I=(a_1,\ldots,a_s)$, we also write $e_A(a_1,\ldots,a_s;M)$
for $e_A(I;M)$, omitting $M$ when $M=A$.

By the associativity formula \cite{HunekeSwanson}*{Theorem~11.2.4},
applied over $A/\operatorname{Ann}_A M$, we have
\[
 e_A(I;M)=
 \sum_{\substack{\fp\in\Supp_A M\\\dim A/\fp=t}}
 \length_{A_\fp}(M_\fp)\,e_A(I;A/\fp).
\]

For an ideal $\mathfrak a$ and an
$A$-module $M$, put
$H^0_{\mathfrak a}(M)=0:_M\mathfrak a^\infty$.  If
$P\in\Spec A$ and $n\geq1$, put $P^{(n)}=P^nA_P\cap A$, the $n$-th symbolic power of $P$.

\section{Reductions and Bertini sections}
\label{sec:reductions}

\subsection{Preliminary results}

This section collects several results which allows, say, the passage to completions and the case of infinite residue fields while without changing the multiplicity and 
colength.

\begin{lemma}
\label{lem:top-reduction}
Let $(A,\n)$ be a local ring.
\begin{enumerate}[label=\textup{(\roman*)}]
\item If $Q\subset A$ is $\n$-primary, then
\[
 \length_A(A/Q)=\length_{\widehat A}(\widehat A/Q\widehat A),
 \qquad
 e_A(Q)=e_{\widehat A}(Q\widehat A).
\]
\item Suppose that $A$ is complete and has isolated singularities,
and that $\dim A=d\geq1$.  Let $P_1,\ldots,P_s$ be the minimal primes of
dimension $d$, and set
\[
 U=\bigcap_{i=1}^sP_i,\qquad B=A/U.
\]
Then $B$ is complete, reduced, and equidimensional, has isolated
singularities, and satisfies $B/\n B\cong A/\n$.  For every
$\n$-primary ideal $Q$,
\begin{equation}
 e_A(Q)=e_B(QB),\qquad
 \length_B(B/QB)\leq\length_A(A/Q).
\label{eq:top-reduction}
\end{equation}
\end{enumerate}
\end{lemma}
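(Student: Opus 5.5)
For part (i), I will use that $\widehat A$ is faithfully flat over $A$ and $\widehat A/Q\widehat A\cong A/Q$ when $Q$ is $\n$-primary. Indeed, $A/Q$ is already $\n$-adically complete, since $\n^t\subseteq Q$ for some $t$. Applying this to every power $Q^q$, which is again $\n$-primary, and noting $(Q\widehat A)^q=Q^q\widehat A$, gives
\[
\length_A(A/Q^q)=\length_{\widehat A}(\widehat A/Q^q\widehat A)
\]
for all $q\geq1$. Lengths agree because $\widehat A/\n\widehat A=A/\n$, so composition series correspond. The statement for $e$ then follows from the defining limit, using $\dim A=\dim\widehat A$.

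For part (ii), I first record the structural claims. $B=A/U$ is a quotient of a complete local ring, hence complete, and $B/\n B=A/\n$ since $U\subseteq\n$. Because $U$ is an intersection of primes, $B$ is reduced. Its minimal primes are the $P_i/U$, each of dimension $d$, so $B$ is equidimensional. For isolated singularities, take a nonmaximal prime $\fp\supseteq U$. Then $A_\fp$ is regular, hence a domain, so $\fp$ contains a unique minimal prime $P$ of $A$ and $PA_\fp=0$. Two cases arise. If $P$ is one of the $P_i$, then $UA_\fp=P A_\fp=0$, because the other $P_j$ are not contained in $\fp$; hence $B_\fp=A_\fp$ is regular. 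If $P$ is not among the $P_i$, then $\fp$ cannot contain any $P_i$ (otherwise $A_\fp$ would have two minimal primes), contradicting $\fp\supseteq U$. This is the step that needs the most care: it uses that a regular local ring is a domain, and I expect it to be the only mildly delicate point.

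For the numerical claims, the inequality $\length_B(B/QB)\leq\length_A(A/Q)$ is immediate from the surjection $A/Q\to B/QB$. For the multiplicity, I apply the associativity formula to $A$ and to $B$. The primes of dimension $d$ in $\Supp B$ are exactly the $P_i$. By the argument above, each $A_{P_i}$ is regular, hence a field when $d\geq1$, since $P_i$ is nonmaximal. So $\length(A_{P_i})=1$, and $B_{P_i}=A_{P_i}$ because $UA_{P_i}=P_iA_{P_i}=0$. Both sides therefore equal $\sum_i e_A(Q;A/P_i)$. Finally, $e_B(QB)=e_A(Q;B)$, because lengths over $A$ and $B$ agree on $B$-modules.
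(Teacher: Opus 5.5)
Your proposal is correct and follows essentially the same route as the paper: for (i), the identification $\widehat A/Q^q\widehat A\cong A/Q^q$ for $\n$-primary $Q$; for (ii), the associativity formula together with the facts that each $A_{P_i}$ is a field and that regularity of $A_{\mathfrak p}$ forces $UA_{\mathfrak p}=0$. Your two-case argument for isolated singularities is a slightly longer version of the paper's direct one, which notes that $P_1\cdots P_s\subseteq U\subseteq\mathfrak p$ gives some $P_i\subseteq\mathfrak p$, and then $UA_{\mathfrak p}\subseteq P_iA_{\mathfrak p}=0$ because $A_{\mathfrak p}$ is a domain.
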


\begin{proof}
Part (i) follows from faithful flatness of completion and the
isomorphisms $A/\n^r\cong\widehat A/\n^r\widehat A$.  These
isomorphisms also show that extension and contraction induce a
bijection between the $\n$-primary ideals of $A$ and the
$\n\widehat A$-primary ideals of $\widehat A$.

For (ii), by assumption $A_{P_i}$ is a
zero-dimensional regular local ring and hence a field.  Thus
$\length_{A_{P_i}}(A_{P_i})=1$.  The associativity formula gives
\[
 e_A(Q)
 =\sum_{i=1}^s e_A(Q;A/P_i)
 =e_B(QB).
\]
The length inequality follows because $B/QB$ is a quotient of
$A/Q$.  The ideal $U$ is radical,
$\Min B=\{P_i/U:1\leq i\leq s\}$, and
$\dim B/(P_i/U)=d$ for every $i$.  Thus $B$ is reduced and
equidimensional of dimension $d$; it is complete because it is a
quotient of $A$.

It remains to check that $B$ has isolated singularities.  Let $\mathfrak q$ be a
nonmaximal prime of $B$, with inverse image $P$ in $A$.
Since $P_1\cdots P_s\subseteq U\subseteq P$ and $P$ is prime,
some $P_i$ is contained in $P$.
The regular local ring $A_P$ is a domain, so
$(P_i)_P=0$, and 
therefore $U_P\subset (P_i)_P=0$.  Hence
$B_{\mathfrak q}\cong A_P$ is regular.
\end{proof}

The following lemma transfers a large-colength estimate from $B$ back
to $A$.

\begin{lemma}\label{lem:tail-transfer}
Retain the notation of Lemma~\resultref{lem:top-reduction}(ii), and fix
$\eps>0$.  Suppose that an integer $N_B$ has the property that
$
 e_B(J)\leq d!(1+\eps)\length_B(B/J)
$
for every $\n B$-primary ideal $J$ with
$\length_B(B/J)>N_B$.  Then the same inequality holds for every
$\n$-primary ideal $Q\subset A$ of colength greater than
\[
 N_A=\max\{N_B,\lceil e(B)N_B\rceil\}.
\]
\end{lemma}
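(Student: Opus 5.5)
Let $Q\subset A$ be $\n$-primary with $\length_A(A/Q)>N_A$, and put $J=QB$, which is $\n B$-primary. By \eqref{eq:top-reduction} we have $e_A(Q)=e_B(J)$ and $\length_B(B/J)\leq\length_A(A/Q)$. The argument splits into two cases according to whether $\length_B(B/J)$ exceeds $N_B$.

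\emph{Case 1: $\length_B(B/J)>N_B$.} The hypothesis on $N_B$ applies to $J$ and gives
\[
 e_A(Q)=e_B(J)\leq d!(1+\eps)\length_B(B/J)\leq d!(1+\eps)\length_A(A/Q).
\]

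\emph{Case 2: $\length_B(B/J)\leq N_B$.} Here I would use Lech's inequality for $B$, namely $e_B(J)\leq d!\,e(B)\length_B(B/J)$. This gives
\[
 e_A(Q)=e_B(J)\leq d!\,e(B)N_B\leq d!\lceil e(B)N_B\rceil<d!\,\length_A(A/Q),
\]
where the last step uses $\length_A(A/Q)>N_A\geq\lceil e(B)N_B\rceil$. Since $1<1+\eps$, the required bound holds.

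Both cases are routine. The only point to check is that Lech's inequality applies to $B$: it does, because $\dim B=d\geq1$ by Lemma~\resultref{lem:top-reduction}(ii). The term $N_B$ in the maximum defining $N_A$ is not needed, since $\lceil e(B)N_B\rceil\geq N_B$ already as $e(B)\geq1$; it is harmless to keep it.
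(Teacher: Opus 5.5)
Your proposal is correct and follows the paper's proof essentially verbatim: the same split according to whether $\length_B(B/QB)$ exceeds $N_B$, using \eqref{eq:top-reduction} with the hypothesis in the first case and Lech's inequality for $B$ in the second. Your side remark is also right: the $N_B$ term in the definition of $N_A$ is redundant because $e(B)\geq1$.
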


\begin{proof}
Put $J=QB$.  If $\length_B(B/J)>N_B$, then
\eqref{eq:top-reduction} and the assumed bound in $B$ give
\[
 e_A(Q)=e_B(J)
 \leq d!(1+\eps)\length_B(B/J)
 \leq d!(1+\eps)\length_A(A/Q).
\]
If $\length_B(B/J)\leq N_B$, Lech's inequality
\cite{Lech} gives
\[
 e_A(Q)=e_B(J)
 \leq d!\,e(B)\length_B(B/J)
 \leq d!\,e(B)N_B
 <d!\length_A(A/Q).
\]
\end{proof}

  The following useful result allows passage to infinite residue fields without changing dimension, colength, or
multiplicity.

\begin{lemma}
\label{lem:infinite-extension}
Let $(A,\n,k)$ be a complete local ring and let $T$ be an
indeterminate.  Put
\[
 A^\natural=\widehat{A[T]_{\n A[T]}}.
\]
Then $A\to A^\natural$ is a regular, faithfully flat local
homomorphism,
the maximal ideal of $A^\natural$ is $\n A^\natural$, and its
residue field is $k(T)$.  It preserves dimension, colength, and
Hilbert--Samuel multiplicity.  If $A$ is reduced or equidimensional, then so is
$A^\natural$.  If $A$ has isolated singularities, so does
$A^\natural$.  Cohen--Macaulayness on the punctured spectrum is also
preserved.
\end{lemma}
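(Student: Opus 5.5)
The strategy is to establish the formal properties of $A\to A^\natural$ in the order: flatness and regularity, then the invariants (colength, multiplicity, dimension), then the ring-theoretic properties. For the first group, $A\to A[T]_{\n A[T]}$ is a flat local map. It is a localization of a polynomial extension, its closed fibre is the field $k(T)$, and its maximal ideal is $\n A[T]_{\n A[T]}$. Completion is faithfully flat, so $A\to A^\natural$ is flat and local, hence faithfully flat. The maximal ideal of $A^\natural$ is generated by $\n$, and the residue field is $k(T)$. Since $A$ is complete, it is excellent, so $A[T]_{\n A[T]}$ is excellent and its completion map is regular. The fibres of $A\to A[T]_{\n A[T]}$ are localizations of $\kappa(\fp)[T]$, hence geometrically regular. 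A composite of regular maps is regular, so $A\to A^\natural$ is regular.

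For the invariants, take an $\n$-primary $Q$. Then $A^\natural/QA^\natural\cong A[T]_{\n A[T]}/QA[T]_{\n A[T]}$. The module $A/Q$ has a composition series with factors $k$, and by flatness each factor becomes $k(T)$ after base change. Hence $\length_{A^\natural}(A^\natural/QA^\natural)=\length_A(A/Q)$. Applying this to $Q^q$ for all $q$ shows that the Hilbert–Samuel functions coincide. Therefore dimensions agree and $e_A(Q)=e_{A^\natural}(QA^\natural)$.

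For the ring-theoretic properties, I would use the standard ascent results for flat local maps with regular (in particular reduced, and in fact regular) fibres. Reducedness ascends along regular maps; this is $R_0$+$S_1$ ascent (Matsumura, Theorem 23.9). For Cohen–Macaulayness on the punctured spectrum and isolated singularities, take a nonmaximal prime $P$ of $A^\natural$ and set $\fp=P\cap A$. The map $A_\fp\to A^\natural_P$ is flat with regular closed fibre. So $A^\natural_P$ is regular (respectively Cohen–Macaulay) as soon as $A_\fp$ is. The one point to check is that $\fp\neq\n$. If $\fp=\n$, then $P\supseteq\n A^\natural$, which is the maximal ideal, so $P$ would be maximal; this is a contradiction.

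The main obstacle is equidimensionality. Every minimal prime $P$ of $A^\natural$ contracts to a minimal prime $\fp$ of $A$ by going-down. Conversely, for a minimal $\fp$, the map $A/\fp\to A^\natural/\fp A^\natural=(A/\fp)^\natural$ is again of the same type, so $\dim A^\natural/\fp A^\natural=\dim A/\fp=d$. The difficulty is to show that every minimal prime of $(A/\fp)^\natural$ has full dimension. I would reduce to a complete local domain $D=A/\fp$. Such a $D$ is universally catenary and formally equidimensional. Then $D[T]_{\n D[T]}$ is a local domain that is a localization of a finitely generated $D$-algebra. By Ratliff's theorem it is formally equidimensional, so its completion $D^\natural$ is equidimensional of dimension $\dim D$. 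Finally, each minimal prime $P$ of $A^\natural$ is minimal over some $\fp A^\natural$. Therefore $\dim A^\natural/P=d$, which gives equidimensionality of $A^\natural$.
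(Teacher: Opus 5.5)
Your proposal is correct and follows the paper's proof. Both arguments obtain regularity from the excellence of $A[T]_{\n A[T]}$ and the regularity of its completion map, preserve length and multiplicity because the closed fibre is the field $k(T)$, and ascend reducedness, regularity and Cohen--Macaulayness along the regular localized maps $A_\fp\to A^\natural_P$ with $\fp\ne\n$. For equidimensionality, you work componentwise on $(A/\fp)^\natural$ using Ratliff's theorem for the local domain $(A/\fp)[T]_{\n(A/\fp)[T]}$, whereas the paper first shows $A[T]_{\n A[T]}$ is equidimensional and then passes to the completion via universal catenarity; these are the same underlying fact.
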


\begin{proof}
Put $C=A[T]_{\n A[T]}$.  The map $A\to C$ is essentially smooth
and local, with closed fibre $k(T)$.  Complete local rings are
excellent \cite{Stacks}*{Tag~07QW}, and excellence is preserved by
localization and passage to a polynomial ring.  Thus $C$ is
excellent and the completion map $C\to\widehat C=A^\natural$ is
regular.  The composite $A\to A^\natural$ is therefore regular,
local, and faithfully flat.  Since the closed fibre is a field,
\[
 \length_{A^\natural}(A^\natural/QA^\natural)
   =\length_A(A/Q),\qquad
 e_{A^\natural}(QA^\natural)=e_A(Q)
\]
for every $\n$-primary $Q$; the map $A\to A^\natural$ also
preserves dimension.
Regular maps preserve reducedness.  If $A$ is equidimensional, then
\[
 \Min C=\{\mathfrak pC:\mathfrak p\in\Min A\},\qquad
 \dim C/\mathfrak pC=\dim A/\mathfrak p.
\]
Thus $C$ is equidimensional.  Since the excellent local ring $C$
is universally catenary, its completion is equidimensional by the
dimension formula.  Finally, the only prime of $A^\natural$ lying
over $\n$ is its maximal ideal.  If $\mathfrak q\ne\n A^\natural$
and $P=\mathfrak q\cap A$, then $P\ne\n$, and
$
 A_P\to(A^\natural)_{\mathfrak q}
$
is regular.  Regularity and Cohen--Macaulayness therefore ascend from
$A_P$ to $(A^\natural)_{\mathfrak q}$, proving the last assertion.
\end{proof}

\subsection{Bertini sections}

We first record a uniform multiplicity bound on the punctured
spectrum.
\begin{lemma}\label{lem:punctured-multiplicity}
Let $(A,\n)$ be a complete reduced equidimensional local ring of
positive dimension.  Then
\[
 \muo(A):=\max\{e(A_P):P\in\Spec A,\ P\ne\n\}\le e(A).
\]
\end{lemma}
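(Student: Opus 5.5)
We must show two things: the set $\{e(A_P):P\ne\n\}$ is bounded (so the supremum is a maximum, attained as a finite positive integer), and every value is at most $e(A)$. The plan is to get both from the classical result that multiplicity is upper semicontinuous along specialization, together with finiteness of the number of values.

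\textbf{Step 1: localization does not increase multiplicity.} For primes $P\subseteq Q$ in $A$ we want $e(A_P)\le e(A_Q)$. Since $A$ is complete, hence excellent and in particular quasi-unmixed, and is reduced and equidimensional, Nagata's theorem \cite{Nagata}*{Theorem~40.1} gives $e(A_P)\le e(A_Q)$. Taking $Q=\n$ yields $e(A_P)\le e(A)$ for every $P\ne\n$. Thus the supremum is bounded by $e(A)$.

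\textbf{Step 2: the supremum is attained and positive.} The values $e(A_P)$ are positive integers bounded by $e(A)$, so the supremum is attained. The set over which we take it is nonempty because $\dim A>0$, so a minimal prime differs from $\n$. Hence $\muo(A)$ is a finite positive integer with $1\le\muo(A)\le e(A)$.

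\textbf{Main obstacle.} The only delicate point is verifying the hypotheses of Nagata's semicontinuity theorem, namely that $A_Q$ is quasi-unmixed (formally equidimensional). For $Q=\n$ this holds because $A$ is complete and equidimensional, so $\widehat A=A$ is equidimensional. Since only the comparison $P\subseteq\n$ is needed, no statement for intermediate primes is required.

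If one prefers to avoid quoting Nagata in that form, the fallback is the standard reduction. Choose a saturated chain from $P$ to $\n$; using catenarity and equidimensionality, it suffices to treat $\dim A/P=1$. In that case, pick $x\in\n\setminus P$ superficial (the residue field may be made infinite via Lemma~\resultref{lem:infinite-extension}, which preserves multiplicities). Then compare $e(A)$ with $e(A_P)\,e(A/P)\ge e(A_P)$, using the associativity formula.
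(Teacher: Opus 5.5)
Your proof is correct and follows the paper's own route: both reduce the lemma to Nagata's localization inequality \cite{Nagata}*{Theorem~40.1}, after which finiteness and positivity of $\muo(A)$ are immediate.

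The only difference is how Nagata's hypotheses are checked. The paper verifies them in their literal form: the dimension formula $\dim A=\dim A_P+\dim A/P$ (from universal catenarity plus equidimensionality), and analytic unramifiedness of $P$ (automatic, since $A/P$ is a complete domain). Your quasi-unmixedness check supplies the same thing, so your fallback sketch is not needed. One small correction: excellence alone does not give quasi-unmixedness; it is equidimensionality of $A=\widehat A$ that does.
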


\begin{proof}
The complete local ring $A$ is excellent and universally catenary.
Equidimensionality therefore gives
\[
 \dim A=\dim A_P+\dim A/P  \qquad \text{ for every } \qquad P\in\Spec A .
\]
Moreover, $P$ is analytically unramified in the terminology of
Nagata, since $A/P$ is a complete domain and hence
$\widehat{A/P}=A/P$ is reduced.  Nagata's localization inequality
\cite{Nagata}*{Theorem~40.1, p.~153} now gives
$e(A_P)\leq e(A)$, proving the inequality.
\end{proof}

We recall the Flenner--Trivedi local Bertini theorem
\cites{Flenner,Trivedi,TrivediErratum} in the form recorded in
\cite{OchiaiShimomoto}*{Theorem~1.1}.  If $(A,\n)$ is local,
$E\subseteq\n$ is an ideal, and
$\mathcal Q\subseteq\{P\in\Spec A:E\nsubseteq P\}$ is finite, there
is $x\in E$ such that
\begin{equation}
 x\notin P^{(2)}
 \quad\text{for every }P\text{ with }E\nsubseteq P,
 \qquad
 x\notin\bigcup_{Q\in\mathcal Q}Q.
\label{eq:FT}
\end{equation}
For $E=\n$, the first condition applies to every $P \neq \n$.
If $A$ is reduced and $\dim A>0$, then $x$ is $A$-regular:
indeed, $P^{(2)}=P$ for every
$P\in\Min A=\Ass A$, since $A_P$ is a field.
If, in addition, $A$ has isolated singularities and $\dim A\geq2$,
then $A/xA$ has isolated singularities: for $P$ nonmaximal and
contains $x$, the local ring $A_P$ is regular and
$x/1\in PA_P\setminus(PA_P)^2$, so $A_P/xA_P$ is regular.

Suppose now that $A$ is complete and reduced and that $k=A/\n$ is
infinite.  Fix generators $y_1,\ldots,y_s$ of $\n$ and a
coefficient field $C\subseteq A$ in equal characteristic, or a
Cohen coefficient DVR $C\subseteq A$ in mixed characteristic,
with residue field $k$.
We choose elements of the form
\[
 x_a=\sum_{i=1}^s a_i y_i,\qquad a_i\in C,
\]
with residue coefficient vector $\bar a\in k^s$.
Let $P_1,\ldots,P_h$ be the minimal primes of $A$, and put
$A_j=A/P_j$.  On a mixed-characteristic component, the map
$C\to A_j$ is still a coefficient ring map; on a
characteristic-$p$ component it factors through $C/pC=k$.
For each $A_j$, the general-element argument in
\cite{Flenner}*{Satz~4.1} in equal characteristic, or
\cite{OchiaiShimomoto}*{Theorem~4.3, pp.~139--141} in mixed
characteristic, supplies a nonempty open subset $U_j\subseteq k^s$
such that, for every lift of its points to $C^s$, the image of
$x_a$ avoids the symbolic square of every nonmaximal prime of
$A_j$.  Here we use the images of the same
generators $y_i$, which need not be minimal in $A_j$.
In the mixed-characteristic proof, their differentials still generate
the completed module of differentials, and the basic-element lemma
used there (Lemma~3.4) requires only generation.  Minimality is used
only to exclude the square of the maximal ideal on p.~140, a
condition not needed here.  The finite prime-avoidance step
(Lemma~4.2) likewise requires only generation.

The intersection $U=\bigcap_j U_j$ is nonempty and open in $k^s$.
For $\bar a\in U$, the element $x_a$ avoids $P^{(2)}$ for every
$P\ne\n$: choose $P_j\subseteq P$ and use the quotient map
$A_P\to(A_j)_{P/P_j}$, which sends $P^2A_P$ into the square of
the maximal ideal.  To impose the finite avoidance condition in
\eqref{eq:FT}, observe that
$(Q+\n^2)/\n^2$ is a proper subspace of $\n/\n^2$ for each
$Q\in\mathcal Q$, by Nakayama's lemma.
Finally, the superficial classes contain a nonempty open subset of
$\n/\n^2$, by \cite{HunekeSwanson}*{Proposition~8.5.7 and
Corollary~8.5.9}.  Pulling these conditions back along the surjection
$k^s\to\n/\n^2$ and intersecting with $U$, we obtain a nonempty
open set of simultaneous choices.  Since $k$ is infinite, it has
a $k$-rational point; any lift of its coordinates to $C$ defines
an element satisfying \eqref{eq:FT} that is also superficial for
$\n$.

Recall that $x\in\n$ is
superficial for $\n$ if, for some $c$,
$(\n^{q+1}:_A x)\cap\n^c=\n^q$ for all $q\geq c$.  When $x$ is
$A$-regular, this is equivalent to
$(\n^{q+1}:_A x)=\n^q$ for all $q\gg0$, and then
$e(A/xA)=e(A)$
\cite{HunekeSwanson}*{Proposition~11.1.9}.

\begin{lemma}\label{lem:bertini-flag-step}
Let $(A,\n)$ be a complete reduced equidimensional
Cohen--Macaulay local ring of dimension $r\geq2$.  Suppose that
$t=\dim\Sing A>0$.  There is an $A$-regular element
$x\in\n$ such that $B=A/xA$ is complete, reduced,
equidimensional, and Cohen--Macaulay of dimension $r-1$, and
\[
 \dim\Sing B\leq t-1.
\]
If $A/\n$ is infinite, $x$ may in addition be chosen superficial
for $\n$, in which case $e(B)=e(A)$.
\end{lemma}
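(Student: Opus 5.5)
We construct $x$ by applying the Flenner--Trivedi theorem \eqref{eq:FT} with $E=\n$. The finite avoidance set $\mathcal Q$ will consist of the minimal primes of $A$ together with the minimal primes of the singular locus $\Sing A=V(J)$. Here $J$ is a defining ideal of the singular locus, which is closed because complete local rings are excellent. Since $t=\dim\Sing A>0$, every minimal prime $Q$ of $V(J)$ of dimension $t$ is nonmaximal, so avoiding it is legitimate. We include all minimal primes of $V(J)$ that are different from $\n$; a minimal prime equal to $\n$ would mean $\Sing A=\{\n\}$, contradicting $t>0$. When $A/\n$ is infinite, the discussion after \eqref{eq:FT} yields an $x$ that satisfies \eqref{eq:FT} and is also superficial for $\n$. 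The last claim then follows from \cite{HunekeSwanson}*{Proposition~11.1.9}: since $x$ is $A$-regular and superficial, $e(B)=e(A)$.

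The steps for $B=A/xA$ are as follows.

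\textbf{(1) Regularity of $x$ and Cohen--Macaulayness.} As noted after \eqref{eq:FT}, $x$ avoids $P^{(2)}=P$ for every minimal prime $P$, so $x$ is $A$-regular. Hence $B$ is Cohen--Macaulay of dimension $r-1\geq1$, and it is complete as a quotient of a complete ring.

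\textbf{(2) Equidimensionality.} A Cohen--Macaulay local ring is equidimensional; indeed it is unmixed.

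\textbf{(3) Reducedness.} We use Serre's criterion ($R_0$ and $S_1$). Condition $S_1$ holds because $B$ is Cohen--Macaulay. For $R_0$, let $\mathfrak q=P/xA$ be a minimal prime of $B$. Then $\operatorname{ht}P=1$, so $P\neq\n$ since $r\geq2$. Moreover $P\notin\Sing A$: otherwise $P$ would contain some minimal prime $Q$ of $\Sing A$. As $x\notin Q$ and $x\in P$, we get $Q\subsetneq P$, which forces $\operatorname{ht}Q=0$, i.e.\ $Q$ is a minimal prime of $A$. This is fine as far as it goes, but we must exclude $P$ itself being singular. So instead we argue directly: $A_P$ is a one-dimensional Cohen--Macaulay ring containing $x$. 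If $A_P$ is regular, then since $x\notin P^{(2)}$, the quotient $A_P/xA_P$ is a field, as in the argument after \eqref{eq:FT}. If $A_P$ were singular, then $P$ would be a minimal prime of $V(J)\cap V(x)$. To rule this out we need $\dim\Sing A\cap V(x)\leq t-1$ together with the fact that height-one singular primes containing $x$ are avoided; for this we add to $\mathcal Q$ the height-one primes in $\Sing A$. These are finitely many, because they are minimal primes of $V(J)$: a height-one singular prime cannot properly contain a singular prime, since height-zero primes are regular, $A$ being reduced. With this addition, $B$ satisfies $R_0$ and is therefore reduced.

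\textbf{(4) Dimension of the singular locus.} Let $\mathfrak q=P/xA\in\Sing B$ with $P\neq\n$. If $A_P$ were regular, then $x\in PA_P\setminus(PA_P)^2$ would make $B_{\mathfrak q}=A_P/xA_P$ regular, a contradiction. Hence $P\in\Sing A\cap V(x)$. Since $x$ avoids every minimal prime of $\Sing A$, the ideal $J+xA$ has dimension at most $t-1$ in $A$ (by Krull's principal ideal theorem applied to each component). Therefore $\dim\Sing B\leq\dim\bigl(\Sing A\cap V(x)\bigr)\leq t-1$. The maximal ideal causes no trouble, since it is a single point, of dimension $0\leq t-1$.

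The main obstacle is step (3), reducedness. It needs the precise choice of the avoidance set and an honest bookkeeping of singular primes of height one. The remaining steps are formal consequences of \eqref{eq:FT} and the Cohen--Macaulay property.
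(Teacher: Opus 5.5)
Your proof is correct and follows essentially the paper's route: you apply Flenner--Trivedi with $E=\n$ while avoiding the generic points of the components of $\Sing A$, get reducedness from Serre's $(R_0)+(S_1)$, and bound $\dim\Sing B$ by the dimension drop of each component of $\Sing A$ after cutting by $x$. In step (3), the argument you abandoned already finishes the job: $Q\subsetneq P$ forces $Q\in\Min A$, which is impossible because minimal primes of the reduced ring $A$ are regular points. Likewise, the height-one singular primes you ``add'' to $\mathcal Q$ are already in it, as you yourself observe, so that detour is unnecessary.
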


\begin{proof}
Since $A$ is complete it is excellent, so $\Sing A $ is closed and has only finitely many irreducible
components.  Let $\mathcal Q$ be the set
of generic points of its positive-dimensional components, and choose
$x$ via local Bertini Theorem as in \eqref{eq:FT} with $E=\n$.

As explained above, $x$ is $A$-regular.  The quotient
$B$ is complete and
Cohen--Macaulay of dimension $r-1$.  In particular, it is
unmixed, hence equidimensional, and it satisfies $(S_1)$.

We next show that $B$ is generically reduced.  Let
$\overline P\in\Min B$, with inverse image
$P\subset A$.  Since $x$ is $A$-regular, $P$ has
height one and
$\dim A/P=r-1>0$; in particular, $P\ne\n$.
Suppose that $A_P$ were singular.  Choose $Q\in\mathcal Q$ with
$Q\subseteq P$.  A minimal prime of
the reduced ring $A$ is a regular point, so
$\operatorname{ht}Q\geq1$, forcing $Q=P$.  Thus $P$ is
the generic point of a positive-dimensional component of
$\Sing A$, contrary to $x\notin P$.  Hence $A_P$ is regular.
The condition $x\notin P^{(2)}$ says that
$x/1\notin(PA_P)^2$, so
$
 B_{\overline P}=A_P/xA_P
$
is a field.  Thus $B$ satisfies $(R_0)$, and Serre's criterion
$(R_0)+(S_1)$ shows that $B$ is reduced.

It remains to bound its singular locus.  If
$\overline P\ne\n B$ is singular and $P$ is its inverse image,
then $P\in\Sing A$: otherwise $A_P$ would be regular and
$x\in PA_P\setminus(PA_P)^2$ would make
$B_{\overline P}$ regular.  For each $Q\in\mathcal Q$, every
irreducible component of $V(Q)\cap V(x)$ has dimension
$\dim A/Q-1$, by catenarity applied in the domain $A/Q$.
The closed point contributes only a zero-dimensional component.
Therefore $\dim\Sing B\leq t-1$.

If $k=A/\n$ is infinite, choose $x$ superficial as
well by the preceding coefficient-space argument; then $e(B)=e(A)$.
\end{proof}

We use the following invariance of multiplicity under a flat local
homomorphism with regular closed fibre.

\begin{lemma}\label{lem:flat-local-multiplicity}
Let $(A,\mathfrak a,k_A)\to(B,\mathfrak b,k_B)$ be a flat local
homomorphism of Noetherian local rings with regular
closed fibre.  If $A$ is Cohen--Macaulay and $k_A$ is infinite,
then $B$ is Cohen--Macaulay and
\[
 e(B)=e(A).
\]
\end{lemma}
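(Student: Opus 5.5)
The plan is to reduce to the case where $B$ is a quotient of a regular-sequence-style extension and compare Hilbert functions. Let $\bar B=B/\mathfrak aB$ denote the closed fibre, a regular local ring of some dimension $r$. Choose a regular system of parameters $\bar y_1,\ldots,\bar y_r$ of $\bar B$ and lift it to elements $y_1,\ldots,y_r\in\mathfrak b$.

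The first step is Cohen--Macaulayness. It is the standard ascent result: for a flat local map, $\operatorname{depth}B=\operatorname{depth}A+\operatorname{depth}\bar B$ and $\dim B=\dim A+\dim\bar B$. Since $A$ and the regular ring $\bar B$ are Cohen--Macaulay, $B$ is Cohen--Macaulay.

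For the multiplicity, I use that $k_A$ is infinite to choose a minimal reduction $(x_1,\ldots,x_d)$ of $\mathfrak a$, where $d=\dim A$. Since $A$ is Cohen--Macaulay, $e(A)=\length_A(A/(x_1,\ldots,x_d))$. By flatness, the $x_i$ form a $B$-regular sequence. I then claim that $\mathfrak b$ is integral over $J=(x_1,\ldots,x_d,y_1,\ldots,y_r)B$, with $J$ a parameter ideal. The reason is that $\mathfrak b=\mathfrak aB+(y_1,\ldots,y_r)$ because $\bar B$ is regular, so $\mathfrak b^{n+1}\subseteq(\mathfrak a^{n+1}+\ldots)$. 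It is cleaner to note that the associated graded ring satisfies $\operatorname{gr}_{\mathfrak b}B\supseteq$ a polynomial extension structure. Concretely, since $(x)$ is a reduction of $\mathfrak a$ we have $\mathfrak a^{n+1}=(x)\mathfrak a^n$. Writing $\mathfrak b=\mathfrak aB+Y$ with $Y=(y)$, we get $\mathfrak b^{m}=\sum_{i}\mathfrak a^iY^{m-i}B$, and for $m$ large each term lies in $J\mathfrak b^{m-1}$. This gives a reduction, and hence
\[
 e(B)=e_B(J)=\length_B(B/J)
\]
by Cohen--Macaulayness of $B$.

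Finally, flatness gives $\length_B(B/(x)B)$-type computations:
\[
 \length_B(B/J)=\length_B\bigl(B/(x)B\otimes_B B/(y)B\bigr)
 =\length_A(A/(x))\cdot\length_{\bar B}(\bar B/(\bar y)),
\]
using that $B/(x)B$ is flat over $A/(x)$ with closed fibre $\bar B$. I filter $A/(x)$ by copies of $k_A$ and tensor with $B/(y)B$, which is flat over $A$ modulo the regular sequence because the $\bar y$ are $\bar B$-regular and one applies the local flatness criterion. Since $\bar B/(\bar y)=k_B$, we obtain $\length_B(B/J)=e(A)$. The main obstacle is this last flatness and length-additivity step, namely verifying that $B/(y)B$ is $A$-flat. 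I would handle it with the local criterion of flatness, inducting on $r$ and using that each $\bar y_i$ is a nonzerodivisor on the fibre.
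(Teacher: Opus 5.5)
Your proposal is correct and follows essentially the same route as the paper. A minimal reduction of $\mathfrak a$ together with lifts $y_1,\ldots,y_r$ of a regular system of parameters of the closed fibre generates a parameter ideal $J$ reducing $\mathfrak b$. The equality $\ell_B(B/J)=e(A)$ then comes from the $A$-flatness of $B/(y)B$ by filtering $A/(x)$ by copies of $k_A$; the paper cites \cite{Stacks}*{Tag~00MG} for exactly the local-flatness-criterion statement you propose to prove by induction on $r$.

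The only differences are minor. You obtain Cohen--Macaulayness from the depth formula for flat local maps, whereas the paper exhibits the explicit $B$-regular sequence $y_1,\ldots,y_r,x_1,\ldots,x_d$. You also verify the reduction $\mathfrak b^m=J\mathfrak b^{m-1}$ directly rather than appealing to integrality.
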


\begin{proof}
Put $u=\dim A$.  Since $k_A$ is infinite, we can choose a minimal
reduction $(a_1,\ldots,a_u)$ of $\mathfrak a$
\cite{HunekeSwanson}*{Proposition~8.3.7}; the sequence is empty
when $u=0$.  Since $A$ is Cohen--Macaulay,
\[
 \length_A(A/(a_1,\ldots,a_u))=e(A).
\]
Let $v$ be the dimension of the closed fibre, and lift a regular
system of parameters of that fibre to
$y_1,\ldots,y_v\in\mathfrak b$.  By \cite{Stacks}*{Tag~00MG},
this is a $B$-regular sequence and
$
 C=B/(y_1,\ldots,y_v)
$
is flat over $A$.  The $a_i$ therefore form a $C$-regular
sequence.  The dimension formula for flat local homomorphisms
\cite{Stacks}*{Tag~00ON} gives $\dim B=u+v$, so $B$ is
Cohen--Macaulay.

The images of the $y_i$ generate the maximal ideal of the closed
fibre.  Hence
$
 \mathfrak b=\mathfrak aB+(y_1,\ldots,y_v).
$
Since $\mathfrak a$ is integral over $(a_1,\ldots,a_u)$, the
parameter ideal
$
 Q=(a_1,\ldots,a_u,y_1,\ldots,y_v)B
$
reduces $\mathfrak b$.  It follows that
\[
 e(B)=\length_B(B/Q)
 =\length_C(C/(a_1,\ldots,a_u)C).
\]
Tensoring a composition series of $A/(a_1,\ldots,a_u)$ with the
flat local $A$-algebra $C$ gives a filtration whose nonzero
factors are all isomorphic to the simple $C$-module $k_B$.
Hence $\length_C(C/(a_1,\ldots,a_u)C)
=\length_A(A/(a_1,\ldots,a_u))=e(A)$.
\end{proof}

\section{Colon filtration, Constant blocks, and curve sections}
\label{sec:blocks}

\subsection{The colon filtration}

The following assumptions remain in force through the proof of
Theorem~\resultref{thm:recurrence}.

\noindent\textbf{Standing hypotheses.}

 Let $(R,\m,k)$ be
a \emph{complete reduced equidimensional} local ring of dimension $d\geq2$
with infinite residue field $k$.  Assume that $R$ is Cohen--Macaulay on
the punctured spectrum, and fix an $R$-regular element $x\in\m$.  Put
$
 S=R/xR,
$
which is complete of dimension $d-1$, but not necessarily reduced or Cohen--Macaulay.

For an $\m$-primary ideal $K$ and an $\m S$-primary ideal $J$,
set
\[
\begin{aligned}
 F_R(K)&=\frac{e_R(K)}{d!},&\qquad
 F_S(J)&=\frac{e_S(J)}{(d-1)!},\\
 s_x(K)&=\length_S\bigl(S/\bigl((K+(x))/(x)\bigr)\bigr).
\end{aligned}
\]
For convenience, put $F_R(R)=0$.  If $x\notin K$,
then \cite{HMQS}*{Lemma~5.1} implies that
\begin{equation}
 e_R(K)-e_R(K:x)
 \leq d\,e_S\bigl((K+(x))/(x)\bigr).
\label{eq:HMQSstep}
\end{equation}
 Dividing by $d!$
we obtain
\begin{equation}
 F_R(K)\leq F_R(K:x)+F_S((K+(x))/(x)).
 \label{eq:normalized-one-step}
\end{equation}
 If $x\in K$, we claim that this inequality still holds, with the interpretation that $K:x=R$ and $F_R(R)=0$. Indeed, we choose lifts in $K$ of a minimal
reduction $(\bar a_1,\ldots,\bar a_{d-1})$ of
$(K+(x))/(x)$ (using that the residue field is infinite).  Then
$(x,a_1,\ldots,a_{d-1})\subset K$ is a parameter ideal, so
\begin{equation}
 e_R(K)\leq e_R(x,a_1,\ldots,a_{d-1})
 =e_S(\bar a_1,\ldots,\bar a_{d-1})
 =e_S((K+(x))/(x)).
\label{eq:terminal}
\end{equation}
Here the first equality follows from the Koszul Euler characteristic,
using that $x$ is regular; the second follows because
$(\bar a_1,\ldots,\bar a_{d-1})$ reduces
$(K+(x))/(x)$.  Dividing \eqref{eq:terminal} by $d!$ gives the desired inequality
\[
 F_R(K)\leq\frac1dF_S((K+(x))/(x))\leq
 F_S((K+(x))/(x)).
\]

The next proposition combines the estimate when $s_x(K)$ is large
with a separate bound when $s_x(K)$ is bounded.  This accounts for
the maximum in Theorem~\resultref{thm:recurrence}.

\begin{proposition}\label{prop:colon-filtration-reduction}
Let $\alpha,\gamma>0$.  Suppose that there are integers
$C,M\geq1$ such that
\begin{align}
 s_x(K)>C
 &\ \Longrightarrow\
 F_S((K+(x))/(x))\leq\alpha s_x(K),\label{eq:large-slice}\\
 s_x(K)\leq C,\quad\length_R(R/K)>M
 &\ \Longrightarrow\
 F_R(K)\leq\gamma\length_R(R/K)\label{eq:bounded-slice}.
\end{align}
Then every $\Lambda>\max\{\alpha,\gamma\}$ admits an integer
$N=N(R,x,C,M,\Lambda,\alpha)$ such that
\[
 F_R(I)\leq\Lambda\length_R(R/I)
\]
for every $\m$-primary ideal $I$ with
$\length_R(R/I)>N$.
\end{proposition}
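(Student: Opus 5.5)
We iterate the one-step inequality \eqref{eq:normalized-one-step} along the colon filtration of $I$. Put $I_0=I$, $I_{j+1}=I_j:x$, and stop at the first index $m$ with $x\in I_{m}$, setting $I_{m+1}=R$. This index exists because $I$ is $\m$-primary, so $x^r\in I$ for some $r$; each $I_j$ with $j\le m$ is $\m$-primary. Telescoping \eqref{eq:normalized-one-step} (using the terminal case for $j=m$ and $F_R(R)=0$) gives
\[
 F_R(I)\leq\sum_{j=0}^{m}F_S\bigl((I_j+(x))/(x)\bigr).
\]
Since $x$ is regular, multiplication by $x$ gives $I_{j+1}/I_j\cong (I_{j+1}x+I_j)/I_j$, and the exact sequences $0\to R/(I_j:x)\xrightarrow{x} R/I_j\to R/(I_j+(x))\to0$ yield
\[
 \length_R(R/I_j)=\length_R(R/I_{j+1})+s_x(I_j),
 \qquad
 \length_R(R/I)=\sum_{j=0}^{m}s_x(I_j).
\]
The slice lengths therefore partition the colength, and the task is to compare termwise.

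Split the indices. Since $I_{j}\subseteq I_{j+1}$, the images in $S$ increase, so $s_x(I_j)$ is nonincreasing in $j$. Let $j_0$ be the first index with $s_x(I_{j_0})\le C$, so that the terms with $j<j_0$ have $s_x>C$. For each such term \eqref{eq:large-slice} gives $F_S\le\alpha s_x(I_j)$. For the tail $j\ge j_0$, I would not use the slice terms at all. Instead I would stop the telescoping at $j_0$:
\[
 F_R(I)\le\sum_{j<j_0}F_S\bigl((I_j+(x))/(x)\bigr)+F_R(I_{j_0})\le\alpha\sum_{j<j_0}s_x(I_j)+F_R(I_{j_0}).
\]
If $j_0$ does not exist (all slices large), the full sum bounded by $\alpha$ suffices, with the terminal term also large or handled alike. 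Now $s_x(I_{j_0})\le C$. If $\length_R(R/I_{j_0})>M$, then \eqref{eq:bounded-slice} gives $F_R(I_{j_0})\le\gamma\length(R/I_{j_0})$. Otherwise Lech's inequality gives $F_R(I_{j_0})\le e(R)M$, a constant. Combining, and using $\sum_{j<j_0}s_x(I_j)+\length(R/I_{j_0})=\length(R/I)$,
\[
 F_R(I)\le\max\{\alpha,\gamma\}\length_R(R/I)+e(R)M.
\]
Choosing $N$ with $(\Lambda-\max\{\alpha,\gamma\})N\ge e(R)M$ finishes the proof.

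The main point to check carefully is the monotonicity of $s_x(I_j)$ and the boundary cases. These are the case $j_0=0$, where the bound is just \eqref{eq:bounded-slice} or Lech, and the case where the filtration terminates before any slice drops to $\le C$. In that case the terminal inequality \eqref{eq:terminal} has a slice that is still large, so \eqref{eq:large-slice} still applies. The one place needing care is that \eqref{eq:large-slice} is stated for every $\m$-primary $K$, including $K=I_m$ with $x\in K$, which is fine. The dependence $N=N(R,x,C,M,\Lambda,\alpha)$ is then visible through $e(R)$, $M$, and the gap $\Lambda-\max\{\alpha,\gamma\}$ (here $\gamma$ is implicit in $\Lambda$'s admissibility).
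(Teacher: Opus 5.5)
Your argument is correct and is essentially the paper's proof: you telescope \eqref{eq:normalized-one-step} along the colon filtration up to the first index whose slice colength is at most $C$, apply \eqref{eq:large-slice} before that index, and apply \eqref{eq:bounded-slice} or Lech's inequality at it. There are two small points. First, merging both cases into $\max\{\alpha,\gamma\}$ makes your $N$ depend on $\gamma$; if you treat the case $\length_R(R/I_{j_0})>M$ separately, no additive constant arises there, and you recover the paper's $N=\lceil e(R)M/(\Lambda-\alpha)\rceil$, matching the stated dependence. Second, the isomorphism $I_{j+1}/I_j\cong(I_{j+1}x+I_j)/I_j$ is wrong as written, since its right side is $0$. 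It is never used, though: the exact sequence you write next is what actually gives the length identity.
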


\begin{proof}
For an $\m$-primary ideal $I$, let
$q=\min\{n\in\mathbb Z_{\geq0}:x^n\in I\}$ and
 $K_j=I:x^j$, $0\le j\le q$.  Put
 $\lambda_j=\length_R(R/K_j)$, $\beta_j=s_x(K_j)$ for $j<q$, and
 $\beta_q=0$.  Then
$F_R(K_q)=\lambda_q=0$.  Multiplication by $x$ and
\eqref{eq:normalized-one-step} give
\[
 \lambda_j=\lambda_{j+1}+\beta_j,\qquad
 F_R(K_j)\leq F_R(K_{j+1})+F_S((K_j+(x))/(x)).
\]
Let
$
 r=\min\bigl(\{j<q:\beta_j\leq C\}\cup\{q\}\bigr).
$
For $D_j=F_R(K_j)-\Lambda\lambda_j$, the preceding two relations
and \eqref{eq:large-slice} give
$
 D_0\leq D_r-(\Lambda-\alpha)(\lambda_0-\lambda_r).
$
If $r=q$, the right-hand side is nonpositive.  If $r<q$ and
$\lambda_r>M$,
\eqref{eq:bounded-slice} gives $D_r\le0$, because
$\gamma<\Lambda$.  If $\lambda_r\leq M$, Lech's inequality gives
$F_R(K_r)\leq e(R)\lambda_r\leq e(R)M$.  Consequently,
\begin{align*}
 D_0
 &\leq F_R(K_r)-\Lambda\lambda_r
       -(\Lambda-\alpha)(\lambda_0-\lambda_r)\\
 &=F_R(K_r)-\alpha\lambda_r
       -(\Lambda-\alpha)\lambda_0\\
 &\leq e(R)M-(\Lambda-\alpha)\lambda_0,
\end{align*}
and the right-hand side is nonpositive whenever $\lambda_0>N$,
where
\[
 N=\left\lceil\frac{e(R)M}{\Lambda-\alpha}\right\rceil. \qedhere
\]
\end{proof}

\subsection{Constant blocks}

We analyze the intervals on which the images of the colon ideals in
$S=R/xR$ are constant.  These intervals have a simple exact form, and
their multiplicity differences can be computed on suitable
one-dimensional sections.

Fix an $\m$-primary ideal $I \subset R$, and put
\[
 q=\min\{n\in\mathbb Z_{\geq0}:x^n\in I\},\qquad
 K_j=I:x^j,\qquad J_j=(K_j+(x))/(x).
\]
Let $a\geq0$ and $h\geq1$ satisfy $a+h\leq q$. We will say that $[a,a+h)$ is a \emph{constant block} of the colon filtration if
the colengths $\length_S(S/J_j)$ are constant for
$a\leq j<a+h$.  Equivalently, since
$J_j\subseteq J_{j+1}$, the ideals $J_j$ are constant for
$a\leq j<a+h$.
Denote their common value by $J\subset S$, and put
$
 \beta=\length_S(S/J).
$
Put $A=K_a$.  After relabeling $K_{a+j}$ as $K_j$ for
$0\leq j\leq h$, we can write
\[
 A=K_0,\qquad K_j=A:x^j\quad(0\leq j\leq h),\qquad B=K_h,
\]
thanks to the transitivity identities $I: x^{a+j}=(I:x^a):x^j$.
The ideals in a constant block satisfy the following identities.

\begin{lemma}\label{lem:block}
There is an ideal $L\subset A$, generated by fixed lifts of generators
of $J$, such that
\begin{equation}
\begin{aligned}
 L+(x)&=K_j+(x) &&(0\leq j<h),\\
 K_j&=L+xK_{j+1} &&(0\leq j<h),
\end{aligned}
\label{eq:blockrec}
\end{equation}
and
\begin{equation}
 K_j=L+x^{h-j}B\quad(0\leq j\leq h).
\label{eq:blockform}
\end{equation}
Moreover
\begin{equation}
 \length_R(K_{j+1}/K_j)=\beta,\qquad
 \length_R(B/A)=h\beta.
\label{eq:block-lengths}
\end{equation}
\end{lemma}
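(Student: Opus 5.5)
Let $L$ be the ideal of $R$ generated by fixed lifts $\ell_1,\dots,\ell_m\in A$ of a generating set of $J$. Such lifts exist because $J=J_0=(A+(x))/(x)$. The plan is to prove the two identities in \eqref{eq:blockrec} first, then iterate them to get \eqref{eq:blockform}, and finally read off the lengths \eqref{eq:block-lengths} from the short exact sequences induced by multiplication by $x$.

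\textbf{Step 1: the identities \eqref{eq:blockrec}.} For $0\le j<h$ the block is constant, so $J_j=J$. Hence $K_j+(x)=L+(x)$, since both are the preimage of $J$ in $R$. It remains to show $L\subseteq K_j$: we have $L\subseteq A=K_0\subseteq K_j$. This gives the first identity. For the second, the inclusion $L+xK_{j+1}\subseteq K_j$ holds because $xK_{j+1}=x(K_j:x)\subseteq K_j$. Conversely, take $u\in K_j$. Since $K_j\subseteq L+(x)$, write $u=\ell+xv$ with $\ell\in L$. Then $xv=u-\ell\in K_j$, so $v\in K_j:x=K_{j+1}$. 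This proves $K_j=L+xK_{j+1}$.

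\textbf{Step 2: the closed form \eqref{eq:blockform}.} Use descending induction on $j$ from $j=h$. The base case is $K_h=B=L+B$, which holds because $L\subseteq A\subseteq B$. For the inductive step,
\[
K_j=L+xK_{j+1}=L+x\bigl(L+x^{h-j-1}B\bigr)=L+x^{h-j}B,
\]
using $xL\subseteq L$.

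\textbf{Step 3: the lengths \eqref{eq:block-lengths}.} Consider multiplication by $x$ on $R/K_{j+1}\to R/K_j$. It is injective, because $K_j:x=K_{j+1}$. This gives an exact sequence
\[
0\to R/K_{j+1}\xrightarrow{\ x\ } R/K_j\to R/(K_j+(x))\to 0 .
\]
For $j<h$ the cokernel has length $\length_S(S/J_j)=\beta$. Therefore
\[
\length_R(K_{j+1}/K_j)=\length(R/K_j)-\length(R/K_{j+1})=\beta .
\]
All these lengths are finite because every $K_j$ is $\m$-primary: it contains $I$, and $j\le q$. Summing over $0\le j<h$ gives $\length_R(B/A)=h\beta$.

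There is no real obstacle; the argument is elementary. The only point needing care is Step 1, where we must make sure $L$ lies inside every $K_j$ and not merely inside $K_j+(x)$. This is why the lifts are taken in $A=K_0$, the smallest ideal of the block.
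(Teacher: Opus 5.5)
Your proof is correct and follows the paper's own route: you lift generators of $J$ into $A$, decompose $u=\ell+xv$ with $v\in K_j:x=K_{j+1}$, iterate using $xL\subseteq L$, and read off the lengths from the exact sequence given by multiplication by $x$. One small imprecision: $K_h$ can equal $R$ when $a+h=q$, so not every $K_j$ is $\m$-primary; the finiteness of the lengths still follows from $I\subseteq K_j$.
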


\begin{proof}
Choose generators of $J$, lift them to $A$, and let $L$ be the
ideal generated by the lifts.  Then $L\subseteq A\subseteq K_j$ and
$L+(x)=K_j+(x)$ for $0\leq j<h$.  If $b\in K_j$, write
$b=l+xr$, with
$l\in L$.  Since $xr\in K_j$, one has
$r\in K_j:x=K_{j+1}$.  This gives the inclusion $K_j \subset L+xK_{j+1}$ and hence the equality
\eqref{eq:blockrec} as the reverse inclusion is clear.  Iteration, using $xL\subseteq L$,
proves \eqref{eq:blockform} (for $j=h$, it says $B=L+B$).

We have the following short exact sequence
\[
0\to R/K_{j+1}\overset{\cdot x}{\to}R/K_j
\to S/J\to 0.
\]
It follows that
$\beta= \length_S(S/J)= \length_R(R/K_j)-\length_R(R/K_{j+1})=\length_R(K_{j+1}/K_j)$.
Summing over $j$, we get $\length_R(B/A)=h\beta$, proving
\eqref{eq:block-lengths}.
\end{proof}

\subsection{Mixed multiplicities and curve sections}

For $\m$-primary ideals $L_1,\ldots,L_s$ and nonnegative integers
$r_1,\ldots,r_s$ with $r_1+\cdots+r_s=d$, we write
\[
 e_R\bigl(L_1^{[r_1]},\ldots,L_s^{[r_s]}\bigr)
\]
for the mixed multiplicity in which $L_j$ occurs $r_j$ times.  Thus,
for example,
\[
 e_R\bigl(I^{[d-1]},(I')^{[1]}\bigr)
 =e_R(\underbrace{I,\ldots,I}_{d-1},I').
\]

For $\m$-primary ideals $I\subseteq I'=I:x$ with $I'\ne R$, set
\[
 D_1(I,I')=
 e_R(I)-e_R\bigl(I^{[d-1]},(I')^{[1]}\bigr).
\]
Put
\[
 e_0=e_R(I),\quad
 e_1=e_R\bigl(I^{[d-1]},(I')^{[1]}\bigr),\quad e_d=e_R(I').
\]
By Teissier's mixed-multiplicity inequality
\cite{ReesSharp}, 
$e_1^d\leq e_0^{d-1}e_d$. By the monotonicity of Hilbert--Samuel
multiplicity, 
$e_d\leq e_0$.  If $t=(e_d/e_0)^{1/d}\in[0,1]$, then
$e_1\leq e_0t$, and hence
\begin{equation}
 e_R(I)-e_R(I')=e_0(1-t^d)
 \leq d e_0(1-t)\leq dD_1(I,I').
\label{eq:mixedinterp}
\end{equation}

\begin{remark}
A repeated application of \emph{loc. cit.} yields 
$e_R(I_1,\ldots,I_d)^d\leq\prod_{j=1}^d e_R(I_j)$.  Hence, for every
$N$,
\[
 \sup_{\substack{1\leq j\leq d,\ \sqrt{I_j}=\m\\
                  \length_R(R/I_j)>N}}
 \frac{e_R(I_1,\ldots,I_d)}
 {d!\bigl(\prod_{j=1}^d\length_R(R/I_j)\bigr)^{1/d}}
 =
 \sup_{\substack{\sqrt I=\m\\\length_R(R/I)>N}}
 \frac{e_R(I)}{d!\length_R(R/I)}.
\]
The reverse inequality follows by taking $I_1=\cdots=I_d$.  Thus
$\cinf(R)$ has the same tail formula in terms of mixed
multiplicities.
\end{remark}

We recall the following definition from \cite{DinhViet}*{Definition~1}, in the two-ideal case
needed here.  Let $(B,\mathfrak n)$ be
local, and let $J_0,L\subseteq B$ be $\mathfrak n$-primary ideals.
For an ideal $N\subseteq B$, write
$N:_B L^\infty=\bigcup_{q\geq1}(N:_B L^q)$.  An element $a\in L$
is \emph{a weak-$(FC)$-element} for $(J_0,L)$ if
\begin{itemize}
\item [\textup{(FC1)}] \qquad  $(a)\cap J_0^uL^{v+1}=aJ_0^uL^v$
     for all $u,v\gg 0$,
\item [\textup{(FC2)}]  \qquad $0:_B a\subseteq 0:_B L^\infty$.
\end{itemize}
It is an \emph{$(FC)$-element} if, in addition,
\begin{itemize}
 \item [\textup{(FC3)}] \qquad
 $\dim B/\bigl((a):_B L^\infty\bigr)
 =\dim B/(0:_B L^\infty)-1.$
\end{itemize}
A sequence $a_1,\ldots,a_t\in L$ is an \emph{$(FC)$-sequence} for
$(J_0,L)$ if, for every $0\leq r<t$, upon setting
\[
 B_r=B/(a_1,\ldots,a_r)B,\qquad
 J_{0,r}=J_0B_r,\qquad L_r=LB_r,
\]
where $B_0=B$, the image $\bar a_{r+1}$ of $a_{r+1}$ in $B_r$
is an $(FC)$-element for $(J_{0,r},L_r)$.  Since $L$ is
$\mathfrak n$-primary, \textup{(FC2)} is precisely filter-regularity.

To compute $D_1(I,I')$ after cutting down to dimension one, one must
choose the successive elements simultaneously so that they are
superficial for $I$, form an $(FC)$-sequence for $(I',I)$, and
reduce the image of $I$ in $S$.
The following lemma achieves these three genericity conditions simultaneously.

\begin{lemma}\label{lem:simultaneous-open}
Retain $\m$-primary ideals $I\subseteq I'=I:x$ with $I'\ne R$,
and let $b_1,\ldots,b_t$ generate $I$.  Suppose that
$a_1,\ldots,a_{i-1}$, with $1\leq i\leq d-1$, have already been
chosen and satisfy \textup{(i)}--\textup{(iii)} below with $i-1$ in
place of $i$. Let
$
 \pi\colon k^t\to V:=I/\m I
$
be the $k$-linear surjection defined by $b_1,\ldots,b_t$.  There is a Zariski dense open
subset $\Omega_i\subseteq V$, depending on the chosen initial segment,
with the following property.  Set
$U_i=\pi^{-1}(\Omega_i)\subseteq\mathbb A_k^t$.  For every
$\mathbf c_i=(c_{i\nu})\in U_i(k)$, choose arbitrary lifts
$\widetilde c_{i\nu}\in R$ and put
\[
 a_i(\mathbf c_i)=\sum_{\nu=1}^t\widetilde c_{i\nu}b_\nu.
\]
Then adjoining $a_i(\mathbf c_i)$ to the chosen initial segment yields
a sequence with all of the following properties:
\begin{enumerate}[label=\textup{(\roman*)}]
\item $a_1,\ldots,a_i$ is an $I$-superficial sequence, and
$a_j$ is filter-regular on $R/(a_1,\ldots,a_{j-1})$ for
$1\leq j\leq i$;
\item it is an initial segment of an $(FC)$-sequence for $(I',I)$
of length $d-1$, all of whose terms lie in $I$;
\item the images of $a_1,\ldots,a_i$ in $S$ are an initial segment
of a superficial sequence for $J=(I+(x))/(x)$ that can be completed
to a minimal reduction of $J$.  When $i=d-1$, they already
generate such a reduction.
\end{enumerate}
At each stage, $\Omega_i$, and hence $U_i$, may be replaced by a
nonempty principal open.
\end{lemma}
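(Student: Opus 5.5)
The plan is to show that each of the three requirements (i)--(iii), imposed on the new element $a_i$, is satisfied by all elements whose class in $V=I/\m I$ lies in some nonempty Zariski open subset of $V$. Then $\Omega_i$ is taken to be the intersection of these finitely many open sets. Since $k$ is infinite, a finite intersection of nonempty opens in the irreducible space $V\cong\mathbb A_k^{\dim V}$ is again nonempty and dense, and it contains a nonempty principal open. Each condition will depend on $a_i$ only through its class in $I/\m I$, or, more precisely, it will be detected by a "bad" proper closed subset of $V$ that is independent of the lift. Hence the choice of lifts $\widetilde c_{i\nu}$ is irrelevant, and $U_i=\pi^{-1}(\Omega_i)$ is open and nonempty because $\pi$ is a surjective linear map.

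\textbf{Condition (i).} Superficiality for $I$ on $R_{i-1}=R/(a_1,\ldots,a_{i-1})$ is controlled by the fibre cone. The classes in $I/\m I$ that avoid every relevant associated prime of $\operatorname{gr}_I(R_{i-1})\otimes k$ (except those containing the irrelevant ideal) form a nonempty open set; this is \cite{HunekeSwanson}*{Proposition~8.5.7}. Filter-regularity on $R_{i-1}$ requires $a_i$ to avoid the finitely many nonmaximal associated primes of $R_{i-1}$. Each such prime $P$ gives the subspace $(P\cap I+\m I)/\m I$, which is proper: otherwise $I\subseteq P+\m I$, and Nakayama would give $I\subseteq P$, contradicting $\sqrt I=\m\ne P$. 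So this condition also defines a nonempty open set.

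\textbf{Condition (iii).} We work in $S$ with $\bar I=J$. The map $I/\m I\to J/\m_S J$ is surjective and $k$-linear. The images $\bar a_1,\ldots,\bar a_{i-1}$ already form an initial segment of a superficial sequence for $J$ that extends to a minimal reduction. The standard general-element argument (\cite{HunekeSwanson}*{Corollary~8.5.9} together with \cite{HunekeSwanson}*{Proposition~8.3.7} on the fibre cone $F(J)$) gives a nonempty open set of $J/\m_SJ$ on which $\bar a_i$ is superficial for $J$ on $S/(\bar a_1,\ldots,\bar a_{i-1})$. On the same set, the linear forms $\bar a_1,\ldots,\bar a_i$ form part of a homogeneous system of parameters of $F(J)$, so they extend to a minimal reduction; this holds because $\dim F(J)=d-1\ge i$ and the nonvanishing-of-dimension-drop condition is open. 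The preimage in $V$ of this open set is a nonempty open set. When $i=d-1$, the images $\bar a_1,\ldots,\bar a_{d-1}$ form a full system of parameters of $F(J)$, so they generate a minimal reduction.

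\textbf{Condition (ii).} This is the main obstacle. Here we invoke the existence theory for $(FC)$-elements of Viet and Dinh--Viet \cite{DinhViet}. For $\m$-primary ideals $(J_0,L)=(I'R_{i-1},IR_{i-1})$, the $(FC)$-elements in $L$ are generic. Concretely, consider the bigraded ring $T=\bigoplus_{u,v}J_0^uL^v/J_0^{u+1}L^v$ (or the corresponding bigraded ring tensored with $k$). An element $a\in L$ is $(FC)$ as soon as its class in $L/\m L$ avoids the finitely many relevant bihomogeneous associated primes of this ring, together with the primes of $R_{i-1}$ handled in (FC2)/(FC3). Each such prime cuts out a proper linear subspace of $L/\m L=V$, where properness again follows from $\m$-primariness and Nakayama.

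The delicate points are the following. First, (FC3) must hold, and since $L$ is $\m$-primary it reduces to the dimension drop $\dim R_i=\dim R_{i-1}-1$, which follows from filter-regularity. Second, the avoided primes must not contain all of $T_{(0,1)}$; this is where we use $I\subseteq I'$ and that both ideals are $\m$-primary with $I'\ne R$. Third, the segment must remain extendable to length $d-1$, which holds because after $i\le d-1$ steps we have $\dim R_i\ge1$, so the same argument applies at the next step.

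Intersecting the three open sets gives $\Omega_i$, completing the induction.
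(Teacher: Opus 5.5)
Your proposal is correct and follows essentially the same route as the paper. In both, $\Omega_i$ is the intersection of the preimages in $V$ of three open sets: the superficiality open set, the weak-$(FC)$ open set from multigraded prime avoidance (Trung--Verma, Dinh--Viet), and a superficial-plus-dimension-drop open set over $S_{i-1}$; (FC3) is then deduced from filter-regularity and $\dim R_{i-1}\geq 2$. The differences are cosmetic: you impose filter-regularity by avoiding the nonmaximal associated primes directly, whereas the paper reads it off from (FC2); and you phrase extendability in (iii) through a homogeneous system of parameters of the fibre cone $\mathcal F(J)$, whereas the paper avoids the top-dimensional minimal primes of $S_{i-1}$ and uses the Huneke--Swanson results on completing superficial sequences to reductions.
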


\begin{proof}
Put $A_{i-1}=(a_1,\ldots,a_{i-1})$,
\[
 B=R/A_{i-1},\qquad \mathfrak n=\m B,\qquad
 I_B=IB,\qquad I'_B=I'B.
\]
Since $A_{i-1}\subseteq I$, the quotient map induces a surjective
$k$-linear map
$
 \rho_B\colon V\to I_B/\mathfrak n I_B.
$

We first impose the conditions in \textup{(i)} and \textup{(ii)}.  By
\cite{HunekeSwanson}*{Proposition~8.5.7}, there is a dense open subset
$G_B^{\mathrm{sup}}\subseteq I_B/\mathfrak n I_B$ all of whose lifts
are superficial for $I_B$.  Applied to the multigraded data
$(I'_B,I_B)$ and the second component, the homogeneous prime-avoidance
argument in the proof of \cite{TrungVerma}*{Lemma~1.5} supplies a second
dense open subset
$
 G_B^{FC}\subseteq I_B/\mathfrak n I_B.
$
Every lift of a class in $G_B^{FC}$ is superficial in the sense of
\cite{DinhViet}*{Definition~4}, and hence is a weak-$(FC)$-element for
$(I'_B,I_B)$ by \cite{DinhViet}*{Proposition~6}.  Indeed, the bad
locus in the prime-avoidance argument is a finite union of proper linear
subspaces determined by the relevant multigraded associated primes.
Thus this condition depends only on the class modulo
$\mathfrak n I_B$, and every class in
\[
 G_B:=G_B^{\mathrm{sup}}\cap G_B^{FC}
\]
gives \textup{(i)} and the weak-$(FC)$ part of \textup{(ii)}.

We next verify \textup{(FC3)}, keeping track of the unsaturated
quotients.  Since
$I_B$ is $\mathfrak n$-primary,
$
 0:_B I_B^\infty=H^0_{\mathfrak n}(B).
$
The induction hypothesis gives
$
 \dim B/H^0_{\mathfrak n}(B)=d-i+1.
$
If $a\in I_B$ lifts a class in $G_B$, then \textup{(FC2)} implies
that $a$ is a nonzerodivisor on $B/H^0_{\mathfrak n}(B)$.
Indeed, if $ab\in H^0_{\mathfrak n}(B)$, then
$\mathfrak n^qb\subseteq 0:_Ba\subseteq H^0_{\mathfrak n}(B)$ for some $q$;
since $H^0_{\mathfrak n}(B)$ has finite length, this implies $b\in H^0_{\mathfrak n}(B)$.  Moreover,
\[
 \frac{(a):_B I_B^\infty}{aB}
 =H^0_{\mathfrak n}(B/aB),
\]
which also has finite length.
It follows that
$
 \dim B/\bigl((a):_B I_B^\infty\bigr)
 =\dim B/aB=d-i,
$ 
proving \textup{(FC3)}.  It also shows that
$\dim (B/aB)/H^0_{\mathfrak n}(B/aB)=d-i$, which is precisely the
induction statement needed for the next unsaturated quotient.

We now impose \textup{(iii)}.  Set
\[
 S_{i-1}=S/(\bar a_1,\ldots,\bar a_{i-1}),\qquad
 \mathfrak n_{i-1}=\m S_{i-1},\qquad J_{i-1}=JS_{i-1}.
\]
By the induction hypothesis in \textup{(iii)},
$\dim S_{i-1}=d-i$, and $J_{i-1}$ is
$\mathfrak n_{i-1}$-primary.  By assumption, the induced map
$
 \rho_S\colon V\to
 W_{i-1}:=J_{i-1}/\mathfrak n_{i-1}J_{i-1}
$
is surjective.  By \cite{HunekeSwanson}*{Proposition~8.5.7}, there is a dense open
subset $G_{i-1}^{S}\subseteq W_{i-1}$ whose lifts are superficial for
$J_{i-1}$.

Let $\mathcal P_{i-1}$ denote the set of minimal primes
$\mathfrak p$ of $S_{i-1}$ such that
$\dim S_{i-1}/\mathfrak p=d-i$.  For $\mathfrak p\in\mathcal
P_{i-1}$, set
\[
 L_{\mathfrak p}=
 \frac{(\mathfrak p\cap J_{i-1})+\mathfrak n_{i-1}J_{i-1}}
      {\mathfrak n_{i-1}J_{i-1}}\subseteq W_{i-1}.
\]
Each $L_{\mathfrak p}$ is a proper linear subspace.  Otherwise,
Nakayama's lemma would give $J_{i-1}\subseteq\mathfrak p$, contrary
to the fact that $J_{i-1}$ is $\mathfrak n_{i-1}$-primary and
$\mathfrak p\ne\mathfrak n_{i-1}$.  Hence
\[
 G_{i-1}^{\mathrm{red}}
 =G_{i-1}^{S}\setminus
  \bigcup_{\mathfrak p\in\mathcal P_{i-1}}L_{\mathfrak p}
\]
is dense and open in $W_{i-1}$.  Any lift
$z\in J_{i-1}$ of a class in this open satisfies
$\dim S_{i-1}/zS_{i-1}=d-i-1$; since
$G_{i-1}^{\mathrm{red}}\subseteq G_{i-1}^{S}$, it is superficial by
\cite{HunekeSwanson}*{Proposition~8.5.7(3)}.  Applying
\cite{HunekeSwanson}*{Theorem~8.6.3} to the image of $J_{i-1}$ in
$S_{i-1}/zS_{i-1}$, we obtain a superficial sequence in the quotient
that generates a minimal reduction.  Lifting this sequence to
$J_{i-1}$ completes $z$ to a superficial sequence.  By
\cite{HunekeSwanson}*{Proposition~8.6.1}, the resulting reduction lifts
first to $S_{i-1}$ and then, through the fixed superficial initial
segment, to $S$.  It has $d-1$ generators and is therefore a minimal
reduction of $J$.  Thus every class in
$G_{i-1}^{\mathrm{red}}$ preserves \textup{(iii)}.

We can now take
$
 \Omega_i=\rho_B^{-1}(G_B)\cap
            \rho_S^{-1}(G_{i-1}^{\mathrm{red}})\subseteq V,
$
which is dense and open.  Finally,
two choices of coefficient lifts for the same
$\mathbf c_i\in k^t$ produce elements of $I$ that differ by an
element of $\m I$.  They therefore determine the same class in $V$,
so all three conditions hold for arbitrary choices of lifts.  After
shrinking $\Omega_i$ to a nonempty principal open, its inverse image
$U_i$ is also principal.  Since $k$ is infinite, this open contains
a $k$-rational point.  Repeating the construction through stage
$d-1$ gives the continuations asserted in \textup{(ii)} and
\textup{(iii)}.
\end{proof}

Once cutting down to dimension one, we have the following 
 valuation formulas, which will be applied to each adjacent pair in a constant block.

\begin{lemma}\label{lem:simultaneous-curve-section}
Let $I\subseteq I'=I:x$ be $\m$-primary ideals with $I'\ne R$,
and put
$J=(I+(x))/(x)$.  One may choose
$a_1,\ldots,a_{d-1}\in I$ successively as in
Lemma~\resultref{lem:simultaneous-open} so that, for
\[
 C=R/(a_1,\ldots,a_{d-1}),\qquad
 Q=C/H^0_\m(C),
\]
$Q$ is a one-dimensional Cohen--Macaulay ring with no embedded primes,
$x$ is
$Q$-regular,
and
\begin{equation}
D_1(I,I')=e_Q(IQ)-e_Q(I'Q),\qquad
e_Q(x)=e_S(J).
\label{eq:curve}
\end{equation}

More precisely, index by $\nu$ the pairs
$(\mathfrak p_\nu,\mathfrak n_\nu)$, where
$\mathfrak p_\nu\in\Min Q$ and $\mathfrak n_\nu$ is a maximal ideal
of the finite normalization $\overline{Q/\mathfrak p_\nu}$.  Put
$
 V_\nu=(\overline{Q/\mathfrak p_\nu})_{\mathfrak n_\nu},
$
and let $v_\nu$ be the normalized valuation of the discrete valuation
ring (DVR) $V_\nu$.
Set
$
 c_\nu=\length_{Q_{\mathfrak p_\nu}}(Q_{\mathfrak p_\nu}),\,
 f_\nu=[\kappa(\mathfrak n_\nu):k].
$
For an ideal $E\subseteq R$, set
\[
 v_\nu(E)=\min\{v_\nu(\bar z):z\in E\},
\]
where $\bar z$ denotes the image of $z$ in $V_\nu$.  Then
\begin{equation}
\begin{aligned}
D_1(I,I')&=\sum_\nu c_\nu f_\nu
 \bigl(v_\nu(I)-v_\nu(I')\bigr),\\
e_Q(x)&=\sum_\nu c_\nu f_\nu v_\nu(x),\\
0&\le v_\nu(I)-v_\nu(I')\le v_\nu(x).
\end{aligned}
\label{eq:branch-valuation-formulas}
\end{equation}
In particular, $D_1(I,I')\le e_S(J)$.
\end{lemma}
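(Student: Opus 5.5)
We take $a_1,\ldots,a_{d-1}\in I$ successively by Lemma~\resultref{lem:simultaneous-open}, with $i=1,\ldots,d-1$, and set $C=R/(a_1,\ldots,a_{d-1})$ and $Q=C/H^0_\m(C)$.

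\textbf{Structure of $Q$.} By (i) and the computation in that proof, $\dim Q=1$ and $\operatorname{depth}Q\ge1$. Hence $Q$ is one-dimensional Cohen--Macaulay with no embedded primes.

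\textbf{Regularity of $x$ on $Q$.} Since $Q$ is Cohen--Macaulay of dimension one, $x$ is $Q$-regular if and only if $x$ lies in no minimal prime of $Q$, i.e.\ $\dim Q/xQ=0$. Now $Q/xQ$ is a quotient of $C/xC=S/(\bar a_1,\ldots,\bar a_{d-1})$. By (iii), the $\bar a_j$ generate a reduction of the $\m S$-primary ideal $J$, so this ring is Artinian, and $x$ is $Q$-regular.

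\textbf{The formula $e_Q(x)=e_S(J)$.} Since $x$ is $Q$-regular, $e_Q(x)=\length(Q/xQ)$. Because $H^0_\m(C)$ has finite length, $e_Q(x)=e_C(x)=\chi(x;C)$. The sequence $a_1,\ldots,a_{d-1},x$ is a system of parameters of $R$, so by symmetry of the Koszul Euler characteristic and regularity of $x$,
\[
e_C(x)=\chi(x,a_1,\ldots,a_{d-1};R)=e_S(\bar a_1,\ldots,\bar a_{d-1})=e_S(J).
\]
The last step uses (iii) once more.

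\textbf{The formula for $D_1(I,I')$.} This uses (ii) together with the theory of $(FC)$-sequences of Dinh--Viet. For an $(FC)$-sequence of length $d-1$ taken in the second ideal $I$ for the pair $(I',I)$, their result says that the mixed multiplicity $e_R(I^{[d-1]},(I')^{[1]})$ equals $e(I'\,\overline Q)$, where $\overline Q$ is $R$ modulo the sequence and then modulo $0:I^\infty$, i.e.\ $\overline Q=Q$. Superficiality of the sequence for $I$ from (i) gives, in the same way, $e_R(I)=e_Q(IQ)$. Subtracting the two identities yields \eqref{eq:curve}.

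\textbf{Valuation formulas.} For an $\m Q$-primary ideal $E$ of the one-dimensional ring $Q$, the associativity formula and the finite birational normalization give
\[
e_Q(EQ)=\sum_{\mathfrak p}c_{\mathfrak p}\,e(E;Q/\mathfrak p)=\sum_\nu c_\nu f_\nu v_\nu(E).
\]
Here $e(E;Q/\mathfrak p)=e(E;\overline{Q/\mathfrak p})$, and for a semilocal Dedekind ring this equals $\sum_{\mathfrak n}[\kappa(\mathfrak n):k]\,v_{\mathfrak n}(E)$. Applying this to $I$, $I'$ and $(x)$ gives the first two lines of \eqref{eq:branch-valuation-formulas}; $v_\nu(x)$ is finite because $x$ is $Q$-regular.

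\textbf{Inequalities.} The bound $v_\nu(I)\ge v_\nu(I')$ follows from $I\subseteq I'$. For the other bound, $xI'\subseteq I$ gives $v_\nu(x)+v_\nu(I')\ge v_\nu(I)$. Summing with the positive weights $c_\nu f_\nu$ gives $D_1(I,I')\le e_Q(x)=e_S(J)$.

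\textbf{Main obstacle.} The expected difficulty is the identification of the mixed multiplicity $e_R(I^{[d-1]},(I')^{[1]})$ with $e_Q(I'Q)$. This requires matching the Dinh--Viet formula to our unsaturated quotients, namely checking that their quotient $B/(0:L^\infty)$ is exactly our $Q$ at each stage. It also requires that the successive choices from Lemma~\resultref{lem:simultaneous-open} give an honest $(FC)$-sequence of full length $d-1$ rather than merely a weak one; condition (FC3) was checked at each stage in that proof.
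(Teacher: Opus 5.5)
Your outline follows the paper's proof in each component: the Dinh--Viet $(FC)$-sequence formula for $e_R(I^{[d-1]},(I')^{[1]})$, iterated superficial reduction for $e_R(I)$, Koszul Euler characteristics for $e_Q(x)$, and associativity plus finite normalization for the branch formulas. One step, however, is not justified as written. In computing $e_Q(x)$ you assert $\chi(x;C)=\chi(x,a_1,\ldots,a_{d-1};R)$. Your reasons are that $\mathbf a,x$ is a system of parameters, that $\chi$ is symmetric, and that $x$ is $R$-regular. Symmetry only reorders the Koszul complex, and regularity of $x$ only gives $\chi(x,\mathbf a;R)=\chi(\bar{\mathbf a};S)$. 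Neither connects $\chi(\mathbf a,x;R)$ with $\chi(x;C)=\chi(x;H_0(\mathbf a;R))$. The double Koszul complex gives
\[
 \chi(\mathbf a,x;R)=\sum_{q\geq0}(-1)^q\chi\bigl(x;H_q(\mathbf a;R)\bigr),
\]
and the terms with $q\geq1$ need not vanish for an arbitrary system of parameters. For instance, take $R=k\fps{s,t,u,v,w}/((s,t)\cap(u,v))$, $\mathbf a=(s-u,t-v)$ and $x=w$. Then $C\cong k\fps{s,t,w}/(s,t)^2$, so $e_C(x)=3$. On the other hand, $\chi(\mathbf a,x;R)=e(\mathbf a,x;R)=2$, with one from each plane. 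The discrepancy is $e(x;H_1(\mathbf a;R))=1$: the module $H_1(\mathbf a;R)\cong 0:_{R/(s-u)R}(t-v)$ is the one-dimensional module generated by the image of $s$.

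The missing point is that $H_q(\mathbf a;R)$ has finite length for $q\geq1$, and you already have what is needed. By (i) of Lemma~\resultref{lem:simultaneous-open}, each $a_j$ is filter-regular on $R/(a_1,\ldots,a_{j-1})$. So for every prime $P\ne\m$ the sequence $\mathbf a$ is weakly regular on $R_P$, and $H_q(\mathbf a;R)_P=0$ for $q\geq1$. Hence $\chi(x;H_q(\mathbf a;R))=0$ for $q\geq1$, and the sum collapses to $\chi(x;C)$. Your chain $e_Q(x)=\chi(x;Q)=\chi(x;C)=\chi(\mathbf a,x;R)=\chi(\bar{\mathbf a};S)=e_S(J)$ then goes through. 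This is exactly the argument the paper gives, and with it inserted your proof coincides with the paper's.
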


\begin{proof}
Choose the tuple successively as in
Lemma~\resultref{lem:simultaneous-open}.  Write
$\mathbf a=(a_1,\ldots,a_{d-1})$.  Since $I$ is $\m$-primary,
\[
\frac{(\mathbf a):I^\infty}{(\mathbf a)}
=H^0_\m(R/(\mathbf a)).
\]
We now apply the mixed-multiplicity theorem
\cite{Viet}*{Theorem~3.4}, in the form
\cite{DinhViet}*{Theorem~3}, with
$(J_{\mathrm V},I_1)=(I',I)$ and
$(k_0,k_1)=(0,d-1)$.  The ideal $I$ is contained in no minimal
prime of the positive-dimensional reduced ring $R$, so it contains a
nonzerodivisor; in particular, $0:I^\infty=0$.  Part (i) of the cited
theorem gives
$
 e_R\bigl(I^{[d-1]},(I')^{[1]}\bigr)\ne0.
$
Part (ii), together with the displayed saturation identity, gives
$
 e_R\bigl(I^{[d-1]},(I')^{[1]}\bigr)=e_Q(I'Q).
$
Iterating superficial reduction
\cite{HunekeSwanson}*{Proposition~11.1.9} gives
$e_R(I)=e_C(IC)$.  Quotienting $C$ by the finite-length module
$H^0_\m(C)$ does not change this one-dimensional multiplicity, so
$e_R(I)=e_Q(IQ)$.

The images of $\mathbf a$ reduce $J$, so
$(x,\mathbf a)$ is $\m$-primary.  Thus $x$ is a parameter on
$C$, and
\[
 H^0_\m(C)=H^0_{(x)}(C)
 =\frac{(\mathbf a):x^\infty}{(\mathbf a)}.
\]
Consequently $x$ is regular on $Q$, and the nonzero
one-dimensional ring $Q$ is Cohen--Macaulay and has no embedded
primes.

Let $\chi$ denote Koszul Euler characteristic.  Since $\mathbf a$ is
a filter-regular sequence, $H_q(\mathbf a;R)$ has finite length for
$q>0$.  The spectral sequence of the double Koszul complex gives
\[
 \chi(\mathbf a,x;R)
 =\sum_q(-1)^q\chi(x;H_q(\mathbf a;R)).
\]
Since the Euler characteristic of multiplication by $x$ on a
finite-length module is zero, the right side reduces to $\chi(x;H_0(\mathbf a;R))=\chi(x;C)$. Similarly, additivity for the short exact sequence
\[
 0\to H^0_\m(C)\to C\to Q
 \to0
\]
 gives $\chi(x;C)=\chi(x;Q)$.  Since $x$ is
$R$-regular, the Koszul complex on $x$ resolves $S$.  Therefore
\[
\begin{aligned}
 e_Q(x)
 &=\chi(x;Q)=\chi(x;C)
 =\chi(a_1,\ldots,a_{d-1},x;R)\\
 &=\chi(x,a_1,\ldots,a_{d-1};R)\\
 &=\chi(\bar a_1,\ldots,\bar a_{d-1};S)\\
 &=e_S(\bar a_1,\ldots,\bar a_{d-1})=e_S(J).
\end{aligned}
\]

Applying the associativity formula to $Q$ and then passing to the
finite normalizations of the complete one-dimensional domains
$Q/\mathfrak p$ gives the first two formulas in
\eqref{eq:branch-valuation-formulas}; see
\cite{HunekeSwanson}*{Theorems~4.3.4, 11.2.4, and~11.2.7}.  Finally,
$xI'\subseteq I\subseteq I'$ imply
\[
  v_\nu(x)+v_\nu(I') \ge v_\nu(I)\geq v_\nu(I'),
\]
which proves the last formula and the asserted inequality.
\end{proof}

As a consequence, we obtain the following uniform bound for colon pairs with bounded
section colength.  For an integer $C\geq1$ write
$
 A_C=\frac{C^{d-1}e(S)}{(d-1)!}.
$
For $I\subseteq I'=I:x$ as above, if
$\length_S(S/J)\leq C$, then $(\m S)^C\subseteq J$, and hence
\begin{equation}
D_1(I,I')\leq e_S(J)\leq C^{d-1}e(S)=(d-1)!A_C.
\label{eq:rough-drop-bound}
\end{equation}
Together with \eqref{eq:mixedinterp}, this implies that
\[
 F_R(I)-F_R(I')\leq A_C.
\]

\section{The catapower construction}
\label{sec:catapower}

\subsection{The catapower and its localizations}

A key construction is to pass from sequences of ideals in $R$ to ideals in a single
complete Noetherian local ring, its catapower.  For this purpose, we will first review several
constructions concerning ultraproducts in commutative algebra, following \cite{Schoutens}*{Sections~2.1 and~8.1.1}..

Fix a nonprincipal ultrafilter $\mathcal U$ on $\mathbb N$.
We will say that a property holds for $\mathcal U$-almost all $n$ if the set of
indices where it holds belongs to $\mathcal U$.  For a sequence of
rings $(A_n)$, the ultraproduct $\prod_{\mathcal U}A_n$ is obtained
from $\prod_n A_n$ by identifying sequences that agree for
$\mathcal U$-almost all $n$.  We denote the class of $(z_n)$ by
$[z_n]$.  For the constant families $R$ and $k$, we consider
\[
 R^*=\prod_{\mathcal U}R,\qquad
 K=k^{\mathcal U}=\prod_{\mathcal U}k,
\]
known as the ultrapowers of $R$ and $k$.
Since each $\m^t$ is finitely generated,
$\m^tR^*=\prod_{\mathcal U}\m^t$
\cite{Schoutens}*{2.4.20}.  The ring $R^*$ is local with maximal
ideal $\m R^*$ and residue field $K$
\cite{Schoutens}*{2.4.8}. Although $R$ is complete Noetherian, its ultrapower $R^*$ need not be so \cite{Schoutens}*{2.4.16}.
We therefore pass to its separated quotient
\[
 \Inf(R^*)=\bigcap_{t\geq1}\m^tR^*,\qquad
 R_\#=R^*/\Inf(R^*),
\]
called the \emph{catapower} of $R$.  Here $\Inf(R^*)$ consists
of the classes $[z_n]$ for which $z_n\in\m^t$ for
$\mathcal U$-almost all $n$, for every fixed $t\geq1$.
By \Cref{lem:catapower-regularity} below (see also
\cite{Schoutens}*{Theorem~8.1.4}), the ring $R_\#$ is a complete Noetherian local ring, with maximal ideal $\m R_\#$
and residue field $K$.  Constant sequences induce the diagonal map
$R\to R_\#$.  For ideals $E_n\subseteq R$, let
$\catlim E_n$ denote the image of $\prod_{\mathcal U}E_n$ in
$R_\#$.

For a bounded sequence $(a_n)$ of real numbers, we write
$\lim_{\mathcal U}a_n$ for the unique real number $a$ such that
$\{n:|a_n-a|<\varepsilon\}\in\mathcal U$ for every
$\varepsilon>0$.  For any sequence $(a_n)$ of real numbers, we write
$a_n\to_{\mathcal U}\infty$ if
$\{n:a_n\geq B\}\in\mathcal U$ for every real number $B$.

The following length comparison is a special case of
\cite{Schoutens}*{Proposition~2.4.17}; we include the argument for
the quotients used here.

\begin{lemma}\label{lem:bounded-transfer}
Let $H\in\mathbb Z_{\geq0}$, and let $E_n\subseteq R$ be ideals for
which $\length_R(R/E_n)\leq H$.  Then
\begin{equation}
 \Inf(R^*)\subseteq\prod_{\mathcal U}E_n,\qquad
 R_\#/\catlim E_n\cong\prod_{\mathcal U}R/E_n,
\label{eq:bounded-transfer}
\end{equation}
and
\begin{equation}
 \length_{R_\#}(R_\#/\catlim E_n)
 =\lim_{\mathcal U}\length_R(R/E_n).
\label{eq:length-transfer}
\end{equation}
\end{lemma}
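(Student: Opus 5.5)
The plan is to show first that $\m^H\subseteq E_n$ for every $n$, and then to reduce everything to the fact that $R/\m^H$ is an Artinian ring of finite length.

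\textbf{Step 1: $\m^H\subseteq E_n$.} The chain $R/E_n\supseteq \m(R/E_n)\supseteq\m^2(R/E_n)\supseteq\cdots$ strictly decreases until it stabilizes. By Nakayama's lemma it stabilizes only at zero, and there are at most $\length_R(R/E_n)\le H$ strict steps. Hence $\m^H\subseteq E_n$ for all $n$, which gives
\[
\Inf(R^*)\subseteq\m^HR^*=\prod_{\mathcal U}\m^H\subseteq\prod_{\mathcal U}E_n ,
\]
using \cite{Schoutens}*{2.4.20} for the middle equality. This is the first claim in \eqref{eq:bounded-transfer}.

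\textbf{Step 2: the isomorphism.} Since ultraproducts of exact sequences are exact, the surjections $R\to R/E_n$ induce a surjection $R^*\to\prod_{\mathcal U}R/E_n$. Its kernel is $\prod_{\mathcal U}E_n$: a class $[z_n]$ maps to zero exactly when $z_n\in E_n$ for $\mathcal U$-almost all $n$. This kernel contains $\Inf(R^*)$ by Step 1. Passing to $R_\#$, the image of $\prod_{\mathcal U}E_n$ is $\catlim E_n$, so we obtain
\[
R_\#/\catlim E_n\cong R^*/\textstyle\prod_{\mathcal U}E_n\cong\prod_{\mathcal U}R/E_n .
\]

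\textbf{Step 3: lengths.} The sequence $\ell_n=\length_R(R/E_n)$ takes finitely many values, so it equals a constant $\ell$ on a set in $\mathcal U$; thus $\lim_{\mathcal U}\ell_n=\ell$. For such $n$, choose a composition series
\[
R/E_n=M_{n,0}\supsetneq M_{n,1}\supsetneq\cdots\supsetneq M_{n,\ell}=0
\]
with each quotient $M_{n,i}/M_{n,i+1}\cong k$.

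Taking ultraproducts gives a chain in $\prod_{\mathcal U}R/E_n$ whose successive quotients are $\prod_{\mathcal U}k=K$, again by exactness of ultraproducts. Each quotient is killed by $\m R^*$, so it is a one-dimensional vector space over the residue field $K$ of $R_\#$. It is therefore a simple $R_\#$-module, where the module structure comes through $R_\#\to R^*/\prod_{\mathcal U}E_n$. Consequently the chain is a composition series of length $\ell$, which proves \eqref{eq:length-transfer}.

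\textbf{Main obstacle.} The only delicate point is checking that the quotient $K$ is simple over $R_\#$ rather than merely over $R^*$. This holds because $R_\#$ is a quotient of $R^*$ and the $R_\#$-module structure is induced from $R^*$, so the submodules are the same for both rings. All other steps are formal consequences of Łoś-type exactness for ultraproducts of modules.
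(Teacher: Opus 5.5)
Your proof is correct and follows essentially the same route as the paper: Steps 1 and 2 match the paper's argument, namely $\m^H\subseteq E_n$, hence $\Inf(R^*)\subseteq\m^HR^*\subseteq\prod_{\mathcal U}E_n$, and then $R_\#/\catlim E_n\cong R^*/\prod_{\mathcal U}E_n\cong\prod_{\mathcal U}R/E_n$. For the length formula, the paper transfers the $\m$-adic filtration of $R/E_n$ (using finite generation of $\m^i$) and counts the $K$-dimensions of its graded pieces; you instead transfer a composition series after fixing the length on a $\mathcal U$-large set, which is the same exactness-of-ultraproducts argument and equally valid.
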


\begin{proof}
If $H=0$, then $E_n=R$ for every $n$, and all the assertions are
immediate.  Hence assume $H\geq1$.
A nonzero finite-length module of length at most $H$ is killed by
$\m^H$.  Hence $\m^H\subseteq E_n$ for every $n$, which implies
the first assertion.  Consequently,
\[
R_\#/\catlim E_n\cong \prod_{\cU}R /\prod_{\cU} E_n \cong \prod_{\mathcal U}R/E_n,
\]
see also \cite{Schoutens}*{2.1.6}.

Let $N_n=R/E_n$ and $N^*=\prod_{\mathcal U}N_n$.  For
$0\leq i<H$, finite generation of $\m^i$ gives
\[
\m^iN^*/\m^{i+1}N^*
 \cong
 \prod_{\mathcal U}
 \m^iN_n/\m^{i+1}N_n.
\]
The right-hand side is a $K$-vector space whose
dimension is the ultralimit of the corresponding $k$-dimensions.
Summing the dimensions of the successive quotients of the
$\m$-adic filtration proves
\eqref{eq:length-transfer}.
\end{proof}

By
\cite{Schoutens}*{Theorem~8.1.15 and Corollary~8.1.16}, the diagonal map $R\to R_\#$ is faithful flat and regular; the latter
applies because $R$ is complete and hence excellent.  We give a
proof via a Cohen presentation, treating equal and mixed
characteristic separately, and establish the localization properties
needed below.

\begin{lemma}\label{lem:catapower-regularity}
The ring $R_\#$ is complete Noetherian, and the diagonal
map $R\to R_\#$ is faithfully flat and regular.  If $R$ is equidimensional of dimension $d$, then $R_\#$ is reduced and
equidimensional of dimension $d$ and is catenary.  If, in addition,
$R$ is Cohen--Macaulay on the punctured spectrum, then every
nonmaximal localization
of $R_\#$ is Cohen--Macaulay.  If $R$ has isolated
singularities, these localizations are regular.
\end{lemma}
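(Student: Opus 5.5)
We argue through a Cohen presentation of $R$ and compare with the catapower of a regular ring. By Cohen's structure theorem, $R\cong T/\mathfrak a$ with $T=C\fps{z_1,\ldots,z_m}$, where $C$ is the coefficient field $k$ in equal characteristic or a Cohen DVR $(V,p)$ in mixed characteristic. Ultrapowers commute with finite presentations, since $\mathfrak a$ is finitely generated: $R^*\cong T^*/\mathfrak aT^*$ and $\Inf(R^*)$ is the image of $\Inf(T^*)$. Hence $R_\#\cong T_\#/\mathfrak aT_\#$, and it suffices to understand $T_\#$ and the map $T\to T_\#$.

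\textbf{Step 1: the regular case.} We claim $T_\#\cong C_\#\fps{z_1,\ldots,z_m}$, where $C_\#=K$ in equal characteristic and $C_\#=V_\#$ in mixed characteristic. In the mixed case $V_\#$ is a complete DVR with uniformizer $p$ and residue field $K$; this holds because $V^*$ is a valuation ring whose maximal ideal is $pV^*$, and quotienting by $\bigcap p^tV^*$ makes it separated and complete.

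To identify $T_\#$, map a class $[f_n]$ to the power series whose coefficient of $z^\alpha$ is the ultraclass of the coefficients of $f_n$. This map is well defined modulo $\Inf$, and surjectivity and injectivity follow by comparing the truncations $T/\m^t$ and $T_\#/\m^tT_\#\cong\prod_{\mathcal U}T/\m^t$. The latter isomorphism is Lemma~\resultref{lem:bounded-transfer}, and it gives a compatible system of isomorphisms. Completeness follows because $T_\#=\varprojlim T_\#/\m^t$: an inverse system of classes $[f_n^{(t)}]$ glues by a diagonal choice. This diagonal gluing is the step I expect to need the most care, since one must choose representatives coherently across $\mathcal U$-large sets.

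Consequently $R_\#$ is a quotient of a regular complete local ring, hence complete Noetherian. Moreover $T\to T_\#$ is flat, since $C\to C_\#$ is flat (a field extension, or torsion-free over a DVR) and formal power series preserve flatness for Noetherian complete rings via the local flatness criterion. The map is faithfully flat because it is local. Base change gives faithful flatness of $R\to R_\#$.

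\textbf{Step 2: regularity of $R\to R_\#$.} It suffices to show $T\to T_\#$ is regular, since regularity is stable under base change along finite type maps, here $T\to R$. The closed fibre is $K$, a field. Flatness together with the fact that $T_\#$ is complete then gives formal smoothness of $T\to T_\#$ for the $\m$-adic topologies. By André's theorem (or \cite{Stacks}*{Tag~07PM}), a formally smooth map of Noetherian complete local rings whose source is complete is regular. In equal characteristic one can instead argue directly: $k\to K$ is separable when $k$ is perfect. In general, the fibres of $k\fps z\to K\fps z$ are geometrically regular because $k\to K$ is a regular extension; this holds since $k^{\mathcal U}$ is separable over $k$, as ultrapowers preserve linear disjointness from $k^{1/p}$.

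\textbf{Step 3: consequences.} By regularity, reducedness ascends and $\dim R_\#=d$, since the closed fibre is a field. $R_\#$ is catenary because it is complete. For equidimensionality, each minimal prime $\mathfrak q$ of $R_\#$ contracts to a minimal prime $P$ of $R$ by going-down. Then $R_\#/PR_\#=(R/P)_\#$, and the reduced ring $(R/P)_\#$ is equidimensional of dimension $\dim R/P=d$. This follows because the formal fibre argument gives all minimal primes of $(R/P)_\#$ lying over $0$ of dimension $d$, using that $(R/P)_\#$ is flat over the domain $R/P$ with closed fibre a field and the dimension formula.

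For a nonmaximal prime $\mathfrak q$ of $R_\#$, set $P=\mathfrak q\cap R$. Then $P\neq\m$, since $\m R_\#$ is the maximal ideal and the fibre over $\m$ is the single point. The map $R_P\to(R_\#)_{\mathfrak q}$ is flat with regular fibre. Hence Cohen--Macaulayness, respectively regularity, of $R_P$ ascends to $(R_\#)_{\mathfrak q}$ by the standard ascent theorems.
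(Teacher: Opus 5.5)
Your overall route is the paper's: Cohen presentation, the identification $R_\#\cong C_\#\fps{z}/\mathfrak aC_\#\fps{z}$, flatness by an adic/local flatness criterion, regularity of $T\to T_\#$ via \cite{Stacks}*{Tag~07PM} and base change along $T\to R$, and ascent along $R_P\to(R_\#)_{\mathfrak q}$. Your claim that $\Inf(R^*)$ is the image of $\Inf(T^*)$ is correct, and it bypasses the paper's inverse-limit and closedness step. It needs one line of proof: lift each $z_n$ into $\n^{t(n)}$, where $t(n)$ is its $\m$-adic order.

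One sentence in Step 2 is wrong. Flatness, completeness and a field as closed fibre do not give formal smoothness: for inseparable $K/k$, the map $k\fps{z}\to K\fps{z}$ is a counterexample. Tag~07PM requires the closed fibre $K$ to be geometrically regular over $k$, i.e.\ $K/k$ separable, in both equal and mixed characteristic. Your remark that ultrapowers preserve linear disjointness from $k^{1/p}$ supplies exactly this. Via {\L}o\'s, a nontrivial relation $\sum_\nu y_\nu^p\mathbf b^\nu=0$ with $y_\nu\in K$, among $p$-independent elements $\mathbf b$ of $k$, would descend to $k$; this is how the paper argues. So separability is the essential input, not an alternative route, and the perfect-field remark does not cover the general case.

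The genuine gap is equidimensionality in Step 3. You cite flatness over the domain $D=R/P$, a field as closed fibre, and the dimension formula. All three also hold for $D\to\widehat D$ when $D$ is one of Nagata's local domains with non-equidimensional completion, so they cannot prove that $D_\#$ is equidimensional. At a minimal prime $\mathfrak q'$ of $D_\#$, the dimension formula only returns $\operatorname{ht}\mathfrak q'=0$; it says nothing about $\dim D_\#/\mathfrak q'$. You need an ambient ring in which height plus coheight equals dimension, and Step~1 already provides one:
\begin{itemize}
\item If $Q$ is a minimal prime of $\mathfrak aT_\#$, going down gives $\widetilde P=Q\cap T\in\Min(\mathfrak a)$, so $\operatorname{ht}\widetilde P=\dim T-d$ because $R$ is equidimensional.
\item The dimension formula for the flat map $T_{\widetilde P}\to(T_\#)_Q$, whose fibre prime is minimal, gives $\operatorname{ht}Q=\operatorname{ht}\widetilde P$.
\item Since $T_\#=C_\#\fps{z}$ is a regular local ring of dimension $\dim T$, it follows that $\dim T_\#/Q=\dim T-\operatorname{ht}Q=d$.
\end{itemize}
This is the paper's argument. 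With it in place, the remainder of your Step~3 (reducedness, dimension, catenarity, and ascent of Cohen--Macaulayness and regularity) goes through.
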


\begin{proof}
Choose a Cohen presentation (\cite{Stacks}*{Tag~032A})
$
 R=\mathcal O\fps{X_1,\ldots,X_e}/\mathfrak a,
$
where $\mathcal O=k$ in equal characteristic and $\mathcal O$ is a
Cohen ring in mixed characteristic.  In mixed characteristic with
imperfect residue field, we fix one Cohen ring; no canonicity is
required.  Since $R$ is reduced, it has characteristic zero or $p>0$. 

Let 
\[
 \mathcal O^*=\prod_{\mathcal U}\mathcal O, \quad \text{ and }  \quad 
 \mathcal O_\#=
 \begin{cases}
 K,&\mathcal O=k,\\[1mm]
 \mathcal O^*/\bigcap_{r\geq1}p^r\mathcal O^*,
   &\mathcal O\text{ is a Cohen ring}.
 \end{cases}
\]
In the second case, every nonzero class has a representative whose
$p$-adic order is constant on a set in $\mathcal U$; it is
therefore $p^r$ times a unit for a unique $r$.  In particular,
$\mathcal O_\#$ is $p$-adically separated.

To prove completeness, take a compatible family
$\eta_s\in\mathcal O_\#/p^s\mathcal O_\#$, $s\geq1$.
We can recursively choose $u_i\in\mathcal O_\#$ so that
$\sum_{i=0}^{s-1}p^iu_i$ represents $\eta_s$, and represent each
$u_i$ by a sequence $(u_{i,n})_n$ in $\mathcal O$.  Put
\[
 b_n=\sum_{i=0}^{n-1}p^iu_{i,n}.
\]
For every fixed $s$, one has
$b_n\equiv\sum_{i=0}^{s-1}p^iu_{i,n}\pmod{p^s\mathcal O}$
whenever $n\geq s$.  Since $\{n:n\geq s\}\in\mathcal U$,
the image of $[b_n]$ in $\mathcal O_\#$ maps to $\eta_s$
for every $s$.  Thus $\mathcal O_\#$ is $p$-adically complete
(cf. \cite{Schoutens}*{Theorem~8.1.4 and Remark~8.1.5}), and hence
a Cohen ring with uniformizer $p$ and residue field $K$.

The compatible reduction maps modulo $\m^N$, for $N\geq1$, induce
an isomorphism
\begin{equation}
 R_\#\overset{\sim}{\to}
 \varprojlim_N\prod_{\mathcal U}R/\m^N.
 \label{eq:inverse-limit}
\end{equation}
It is injective by definition.  For surjectivity, we use the diagonal
argument in the proof of \cite{Schoutens}*{Theorem~8.1.4}.
Take a compatible family $(\xi_N)_N$ and choose a representing
sequence at each level.  There are decreasing sets
\[
 G_1\supseteq G_2\supseteq\cdots,\qquad G_r\in\mathcal U,
\]
such that, on $G_r$, the representing sequences at levels
$1,\ldots,r$ are compatible under all relevant reduction maps.  Define
$
 r(n)=\max\bigl(\{r\leq n:n\in G_r\}\cup\{1\}\bigr).
$
For each fixed $s$, the set $G_s\cap\{n:n\geq s\}$ belongs to
$\mathcal U$, and $r(n)\geq s$ there.  Thus
$r(n)\to_{\mathcal U}\infty$.  For each $n$, choose a lift to
$R$ of the $n$-th term representing $\xi_{r(n)}$.
For each fixed $s$, these lifts agree modulo $\m^s$ with the
terms representing $\xi_s$ for $\mathcal U$-almost all $n$.
The resulting class in $R_\#$ therefore maps to $(\xi_N)_N$.
Different choices of lifts differ by elements of $\m^{r(n)}$,
so give the same class.  This proves the desired isomorphism.

Let $A=\mathcal O\fps{\mathbf X}$, let $\n$ be its maximal ideal,
and let $A'=\mathcal O_\#\fps{\mathbf X}$.  Reduction modulo
$\n^N$ involves only finitely many coefficients, so taking their
ultraproducts gives
\[
 \prod_{\mathcal U}A/\n^N
 \cong A'/\n^NA'.
\]
If $\mathfrak a=(f_1,\ldots,f_s)$, the fixed finite generating set
also gives, by \cite{Schoutens}*{2.1.6},
\[
 \prod_{\mathcal U}R/\m^N
 \cong A'/(\mathfrak aA'+\n^NA').
\]
The ideal $\mathfrak aA'$ is closed in the complete Noetherian ring
$A'$.  Taking inverse limits in \eqref{eq:inverse-limit} therefore
gives
\begin{equation}
 R_\#\cong
 \mathcal O_\#\fps{X_1,\ldots,X_e}/
 \mathfrak a\mathcal O_\#\fps{X_1,\ldots,X_e}.
\label{eq:coefficient}
\end{equation}
In particular, $R_\#$ is complete and Noetherian.  In equal
characteristic, \eqref{eq:coefficient} is the description in
\cite{Schoutens}*{Theorem~8.1.19}.

To prove regularity of $R\to R_\#$, we first show that the field
extension $K/k$ is separable.  This is automatic when
$\operatorname{char}k=0$ \cite{Stacks}*{Tag~0321}.  Suppose that
$\operatorname{char}k=p>0$.  If the $p$-monomials of a finite
$p$-independent tuple $\mathbf b$ in $k$ became dependent over
$K^p$, there would be elements $y_\nu\in K$, not all zero, such
that
\[
 \sum_\nu y_\nu^p\mathbf b^\nu=0.
\]
The existence of such a tuple is a first-order statement with
parameters in $k$.  Since $K$ is an ultrapower of $k$,
{\L}o\'s's theorem \cite{Schoutens}*{Theorem~2.3.2, pp.~14--15}
implies that such a tuple already exists in $k$.  Its $p$-th
powers would then give a relation over $k^p$, a contradiction.
Thus every finite
subset of a $p$-basis of $k/\mathbb F_p$ remains $p$-independent
in $K$.  It follows from
\cite{Stacks}*{Tag~07P2; Tag~031X}
that $K/k$ is separable.  By
\cite{Stacks}*{Tag~07EQ}, the map $k\to K$ is regular,
equivalently geometrically regular, in either characteristic.

It follows that $\mathcal O\to\mathcal O_\#$ is regular.  In equal
characteristic this is the field extension $K/k$.  In mixed
characteristic, $\mathcal O_\#$ is torsion-free over the DVR
$\mathcal O$, hence flat.  Its closed fibre is $K$, which is
geometrically regular over $k$, and its generic fibre is an extension
of characteristic-zero fields and is therefore regular.

For every $N\geq1$, extension of coefficients gives
\[
A'/\n^NA'
\cong(A/\n^N)\otimes_{\mathcal O}\mathcal O_\#.
\]
(However, it is not necessarily true that
$A'\cong A\otimes_{\mathcal O}\mathcal O_\#$.)
The modules $A'/\n^NA'$ are flat over $A/\n^N$, with surjective
transition maps.  Since $A'$ is their inverse limit, the adic flatness
criterion \cite{Stacks}*{Tag~0912} shows that $A'$ is
flat over $A$.  Its closed fibre is $K$, which is geometrically
regular over $k$.  By
\cite{Stacks}*{Tag~07PM}, the map
\[
 A=\mathcal O\fps{\mathbf X}\to
 A'=\mathcal O_\#\fps{\mathbf X}
\]
is regular.  By \eqref{eq:coefficient}, its base change along the
finite-type map $\mathcal O\fps{\mathbf X}\to R$ is
$R\to R_\#$, which is therefore regular
\cite{Stacks}*{Tag~07C1}.  Being local and flat, the latter is
faithfully flat.

The natural map $\mathcal O_\#\to R_\#$ in
\eqref{eq:coefficient} is also injective.  In equi-characteristic, $\mathcal O_\#=K$ is a field and $R_\#\ne0$, so this is automatic.  In mixed characteristic, this follows from the fact that $R$ has
characteristic zero and the map
$R\to R_\#$ is faithfully flat, hence injective. 
We henceforth identify $\mathcal O_\#$ with its image in $R_\#$.

Suppose now that $R$ is reduced and equidimensional.  Since
$R\to R_\#$ is regular, $R_\#$ is reduced
\cite{Schoutens}*{Corollary~8.1.17}.
If $Q\in\Min(\mathfrak aA')$, going down for the flat map
$A\to A'$ shows that $P=Q\cap A$ belongs to
$\Min(\mathfrak a)$.  Then
$\operatorname{ht}_A P=\dim A-d$.  Moreover, $Q$ is minimal
over $PA'$.  The dimension formula for the flat local map
$A_P\to A'_Q$ \cite{Stacks}*{Tag~00ON} gives
\[
\operatorname{ht}_{A'}Q
=\operatorname{ht}_A P+
\operatorname{ht}\bigl(Q(A'\otimes_A\kappa(P))\bigr)
=\operatorname{ht}_A P,
\]
because the prime of the fibre corresponding to $Q$ is minimal.
Since $\dim A'=\dim A$, every component of $R_\#$ has
dimension $d$.  Complete local rings are excellent, hence universally
catenary \cite{Stacks}*{Tag~07QW}.

Finally, the closed fibre $R_\#/\m R_\#=K$ is a field.  Hence the
only prime of $R_\#$ over $\m$ is its maximal ideal.  A nonmaximal
$\mathfrak q$ therefore contracts to a nonmaximal prime $P$ of
$R$, and  the localized map
$
 R_P\to(R_\#)_{\mathfrak q}
$
is regular.  Cohen--Macaulayness and regularity therefore ascend
from $R_P$ to $(R_\#)_{\mathfrak q}$
\cite{Stacks}*{Tag~045J; Tag~031E}, proving the last two assertions.
\end{proof}

\subsection{Constant blocks in the catapower}

We will choose consecutive ideals increasingly far from both ends of
constant blocks.  The following lemma shows that their images in the
catapower coincide and that the common quotient is one-dimensional
and Cohen--Macaulay.

\begin{lemma}\label{lem:interior-block-limit}
Let
\[
 K_{n,0}\subseteq K_{n,1}\subseteq\cdots\subseteq K_{n,h_n}
 \qquad(n\geq1)
\]
be constant blocks in colon filtrations, reindexed from $0$.  For
$0\leq t<h_n$, put
$
 J_n=(K_{n,t}+(x))/(x)\subseteq S;
$
this ideal is independent of $t$ because the $n$-th block is
constant.  Assume that
$
 \length_S(S/J_n)=\beta,
 \, 1\leq\beta\leq C,
$
for every $n$.  Choose indices $j_n$ with
$
 0\leq j_n<h_n,\,
 j_n\to\infty,\, h_n-j_n\to\infty.
$
Put
\[
 I_n=K_{n,j_n},\qquad I_n'=K_{n,j_n+1},\qquad
 L=\catlim I_n.
\]
Then
\begin{equation}
\begin{gathered}
\catlim I_n'=L,\qquad L:x=L,\\
T=R_\#/L\text{ is one-dimensional Cohen--Macaulay},
\qquad \length_T(T/xT)=\beta.
\end{gathered}
\label{eq:interior-block-limit}
\end{equation}
\end{lemma}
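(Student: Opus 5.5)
The plan is to read everything off the explicit form of a constant block from Lemma~\resultref{lem:block}, and to detect the Cohen--Macaulay property through lengths rather than by computing colons in $R_\#$ directly. This sidesteps the infinitesimals $\Inf(R^*)$, which make colon ideals awkward to handle there.

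\emph{Step 1: the two limits agree.} Write the $n$-th block as in \eqref{eq:blockform}: there is an ideal $L_n\subseteq K_{n,0}$, generated by lifts of generators of $J_n$, such that $K_{n,t}=L_n+x^{h_n-t}B_n$. In particular
\[
 I_n=L_n+x^{h_n-j_n}B_n,\qquad I_n'=L_n+x^{h_n-j_n-1}B_n .
\]
Since $\length_S(S/J_n)=\beta\leq C$, we have $(\m S)^C\subseteq J_n$. Hence $J_n$, and therefore $L_n$, can be generated by at most $g:=\mu(\m^C)+C$ elements, a bound independent of $n$. Padding with zeros, write $L_n=(l_{n,1},\ldots,l_{n,g})$. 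Then $\prod_{\mathcal U}L_n$ is generated by the classes $[l_{n,i}]$, as in \cite{Schoutens}*{2.1.6}, so $\catlim L_n$ is a finitely generated ideal of $R_\#$.

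An element of $\prod_{\mathcal U}I_n$ has the form $[l_n+x^{h_n-j_n}b_n]$. Because $h_n-j_n-1\to\infty$, the sequence $x^{h_n-j_n}b_n$ lies in $\m^t$ for $\mathcal U$-almost all $n$, for every fixed $t$. So this term lies in $\Inf(R^*)$, and the same holds for $I_n'$. Therefore
\[
 \catlim I_n=\catlim I_n'=\catlim L_n=L .
\]

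\emph{Step 2: lengths of $T/x^mT$.} Fix $m\geq1$. For every ideal $E$ the sequence $0\to R/(E:x)\xrightarrow{x}R/E\to R/(E+(x))\to0$ is exact. Iterating it along the colon filtration gives
\[
 \length_R\bigl(R/(I_n+(x^m))\bigr)=\sum_{i=0}^{m-1}\length_S\bigl(S/J(I_n:x^i)\bigr),
\]
where $J(E)=(E+(x))/(x)$. Since $I_n:x^i=K_{n,j_n+i}$ stays inside the block whenever $m\leq h_n-j_n$, each summand equals $\beta$. As $h_n-j_n\to\infty$, this gives $\length_R(R/(I_n+(x^m)))=m\beta$ for $\mathcal U$-almost all $n$.

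These colengths are bounded for fixed $m$, so Lemma~\resultref{lem:bounded-transfer} applies. Since $\catlim(I_n+(x^m))=L+x^mR_\#$, it gives
\[
 \length_T(T/x^mT)=m\beta\qquad\text{for every }m\geq1 .
\]
The case $m=1$ is the asserted equality $\length_T(T/xT)=\beta$.

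\emph{Step 3: dimension one and Cohen--Macaulayness.} Because $T/xT$ has finite length, $\dim T\leq1$. Because $\length_T(T/x^mT)$ is unbounded, $\dim T=1$ and $x$ is a parameter on $T$. For a one-dimensional module with parameter $x$, the Euler characteristic of $x^m$ gives
\[
 \length_T(T/x^mT)-\length_T(0:_Tx^m)=m\,e(x;T).
\]
For $m\gg0$, the submodule $0:_Tx^m$ stabilizes to $H^0_{\m}(T)$. Comparing with the exact formula $m\beta$ from Step 2 forces $e(x;T)=\beta$ and $H^0_{\m}(T)=0$. Hence $T$ has depth one, so it is one-dimensional Cohen--Macaulay, and $x$ is $T$-regular. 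The last statement is precisely $L:x=L$ in $R_\#$.

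The main obstacle is the bookkeeping in Steps 1--2. One must check that $\catlim$ commutes with sums such as $I_n+(x^m)$; this is the reason for the uniform bound on the number of generators of $L_n$. One must also check that the correction terms $x^{h_n-j_n}B_n$ are genuinely infinitesimal. Both depend on the two hypotheses $j_n\to\infty$ and $h_n-j_n\to\infty$; in fact the argument above uses only the second one.
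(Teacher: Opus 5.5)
Your proof is correct. Steps~1 and~2 coincide with the paper's argument. The block form \eqref{eq:blockform} makes $x^{h_n-j_n}B_n$ infinitesimal, and the equality $\length_R(R/(I_n+(x^m)))=m\beta$ on a $\mathcal U$-large set is transferred to $T$ by Lemma~\resultref{lem:bounded-transfer}. The paper gets this length from the equality of kernel and cokernel lengths of multiplication by $x^m$ on $R/I_n$, which is your telescoping count in other words.

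The genuine difference is Step~3. The paper uses only lengths. In $T/x^{N-1}T\xrightarrow{x}T/x^NT\to T/xT\to0$ the lengths $(N-1)\beta$, $N\beta$, $\beta$ force injectivity, so $0:_Tx\subseteq x^{N-1}T$ for every $N$. Krull's intersection theorem then gives $0:_Tx=0$. Since $T\ne0$ and $T/xT$ has finite length, dimension one and the Cohen--Macaulay property follow at once.

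You instead get $\dim T=1$ from the unbounded lengths. You then compare $\length_T(T/x^mT)=m\,e(x;T)+\length_T(0:_Tx^m)$ with $m\beta$, using that $0:_Tx^m$ stabilizes at $H^0_{\m}(T)$. This imports the identity between the Euler characteristic and the multiplicity symbol; you could replace it by the elementary additivity $\chi(x^{a+b};T)=\chi(x^a;T)+\chi(x^b;T)$. In exchange it yields $e(x;T)=\beta$ as a byproduct, which is exactly how $\beta$ enters Lemma~\resultref{lem:limit-multiplicity-bound}. The paper's route avoids multiplicity theory entirely.

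Two small corrections.

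First, Lemma~\resultref{lem:bounded-transfer} is stated with a bound valid for all $n$. Your own formula supplies it: the ideals $\bigl((I_n:x^i)+(x)\bigr)/(x)$ increase with $i$, so $\length_R(R/(I_n+(x^m)))\le m\beta$ for every $n$. Alternatively, alter the sequence off a $\mathcal U$-large set.

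Second, the uniform bound on the number of generators of $L_n$ plays no role. Ultraproducts commute with sums elementwise, $\prod_{\mathcal U}(E_n+F_n)=\prod_{\mathcal U}E_n+\prod_{\mathcal U}F_n$. Hence $\catlim(I_n+(x^m))=L+x^mR_\#$ needs finite generation only of the fixed ideal $(x^m)$. The bound $\mu(\m^C)+C$ you state is also not justified, since $J_n/(\m S)^C$ can require more than $C$ generators. The correct uniform bound is $\length_S(S/(\m S)^{C+1})$, because $(\m S)^{C+1}\subseteq\m J_n$.

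Your remark that only $h_n-j_n\to\infty$ is used is right, and it applies to the paper's proof as well.
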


\begin{proof}
For each $n$, let $P_n$ be generated by the fixed lifts chosen
in Lemma~\resultref{lem:block}, and put
$
 B_n=K_{n,h_n},\, r_n=h_n-j_n.
$
Equation \eqref{eq:blockform} gives
\[
 I_n=P_n+x^{r_n}B_n,\qquad
 I_n'=P_n+x^{r_n-1}B_n.
\]
Since $r_n\to_{\mathcal U}\infty$, the ultraproducts of
$x^{r_n}B_n$ and $x^{r_n-1}B_n$ lie in $\Inf(R^*)$.  Hence
$\catlim P_n=\catlim I_n=\catlim I_n'=L$.

Fix $N\geq1$.  On a set in $\mathcal U$, one has
$N\leq r_n-1$, and
\[
 I_n:x^N=K_{n,j_n+N},\qquad
 I_n':x^N=K_{n,j_n+1+N}.
\]
The kernels of multiplication by $x^N$ on $R/I_n$ and $R/I_n'$
therefore have length $N\beta$ (see (\ref{eq:block-lengths})). An endomorphism of a finite-length
module has kernel and cokernel of equal length.  Hence
\begin{equation}
\length_R(R/(I_n+(x^N)))=N\beta
=\length_R(R/(I_n'+(x^N))).
\label{eq:finite-slice-lengths}
\end{equation}
The ultrapower of the fixed ideal $(x^N)$ is $x^NR^*$
\cite{Schoutens}*{2.4.20}, so
\[
 \catlim(I_n+(x^N))=L+(x^N)
 =\catlim(I_n'+(x^N)).
\]
Each successive quotient of the filtration of $R/I_n$ by powers
of $x$ is a quotient of $R/(I_n+(x))$.  Since $I_n\subseteq I_n'$,
we therefore have, for every $n$,
\[
 \length_R(R/(I_n'+(x^N)))
 \leq \length_R(R/(I_n+(x^N)))
 \leq N\length_R(R/(I_n+(x)))=N\beta.
\]
Applying Lemma~\resultref{lem:bounded-transfer} and using
\eqref{eq:finite-slice-lengths}, which holds for $\mathcal U$-almost
all $n$, we obtain
\begin{equation}
 \length_{R_\#}(T/x^NT)=N\beta\qquad(N\geq1).
\label{eq:plateau}
\end{equation}
In particular, $T/xT$ is nonzero of finite length, so
$\dim T\leq1$.

Consider the following exact sequence 
\[
 T/x^{N-1}T\overset{x}{\to}T/x^NT\to T/xT
 \to0.
\]
The source, target, and cokernel have lengths
$(N-1)\beta,N\beta,\beta$, so the first map is injective.  Thus
$(x^NT:_Tx)=x^{N-1}T$, and so $0:_Tx\subseteq x^{N-1}T$ for every $N$. By Krull's intersection theorem
\cite{Stacks}*{Tag~00IP},
$0:_Tx \subseteq \bigcap_N x^{N-1}T =0$.  Hence $L:x=L$, and the local ring $T$ is
one-dimensional and Cohen--Macaulay.
\end{proof}

In the situation of Lemma~\resultref{lem:interior-block-limit}, put
\[
\omega_x(L)=\sum_{\mathfrak q\in\Min(R_\#/L)}
e_{(R_\#)_{\mathfrak q}}(L_{\mathfrak q})\,
e_{R_\#}(x;R_\#/\mathfrak q).
\]

The associativity formula and Lech's inequality give the following
bound in terms of $\length_{R_\#}(T/xT)$.

\begin{lemma}\label{lem:limit-multiplicity-bound}
With $L,T,\beta$ as in Lemma~\resultref{lem:interior-block-limit}, one has
\begin{equation}
\omega_x(L)\le(d-1)!\muo(R)\beta,
\label{eq:limit-multiplicity-bound}
\end{equation}
where $\muo(R):=\sup\{e(R_P):P\in\Spec R,\ P\ne\m\}$.
\end{lemma}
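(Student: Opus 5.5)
The plan is to bound each summand of $\omega_x(L)$ by Lech's inequality in the localization $(R_\#)_{\mathfrak q}$, and then recombine the summands with the associativity formula for $e_{R_\#}(x;T)$. Since $T=R_\#/L$ is one-dimensional Cohen--Macaulay with $x$ a parameter (Lemma~\resultref{lem:interior-block-limit}), we have
\[
 \beta=\length_{R_\#}(T/xT)=e_{R_\#}(x;T)
 =\sum_{\mathfrak q\in\Min(R_\#/L)}\length_{(R_\#)_{\mathfrak q}}(T_{\mathfrak q})\,e_{R_\#}(x;R_\#/\mathfrak q).
\]
The first equality holds because $x$ is $T$-regular and $T$ is Cohen--Macaulay. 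The second is the associativity formula. All minimal primes of $T$ have $\dim R_\#/\mathfrak q=1$, since $T$ has no embedded primes and is one-dimensional.

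Next fix $\mathfrak q\in\Min(R_\#/L)$. It is nonmaximal, since $\dim R_\#/\mathfrak q=1$. By Lemma~\resultref{lem:catapower-regularity}, $R_\#$ is equidimensional of dimension $d$ and catenary, and it is complete, so $\dim(R_\#)_{\mathfrak q}=d-1$. The ideal $L_{\mathfrak q}$ is primary to the maximal ideal of $(R_\#)_{\mathfrak q}$. Lech's inequality in this $(d-1)$-dimensional local ring gives
\[
 e_{(R_\#)_{\mathfrak q}}(L_{\mathfrak q})\le (d-1)!\,e\bigl((R_\#)_{\mathfrak q}\bigr)\length(T_{\mathfrak q}).
\]
(If $d-1\ge1$, this is Lech's inequality; $d\ge2$ holds by the standing hypotheses.)

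It remains to show $e((R_\#)_{\mathfrak q})\le\muo(R)$. Set $P=\mathfrak q\cap R$. As in the proof of Lemma~\resultref{lem:catapower-regularity}, $P\ne\m$, because the only prime over $\m$ is the maximal ideal. The map $R_P\to(R_\#)_{\mathfrak q}$ is flat local with regular closed fibre, since $R\to R_\#$ is regular. Moreover $R_P$ is Cohen--Macaulay, as $R$ is Cohen--Macaulay on the punctured spectrum. Lemma~\resultref{lem:flat-local-multiplicity} would then give $e((R_\#)_{\mathfrak q})=e(R_P)\le\muo(R)$.

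The main obstacle is that this lemma requires an infinite residue field of $R_P$, and $\kappa(P)$ may be finite only in pathological situations. Here $\kappa(P)$ contains $k$ when $R$ has equal characteristic; in mixed characteristic I would need an additional argument. The first fix I would try is to pass to $R_P(T)=R_P[T]_{PR_P[T]}$ on both sides. This base change preserves multiplicity, Cohen--Macaulayness and regularity of the map, so the lemma applies after it. (Alternatively, note that $e$ is invariant under flat local maps with regular fibres in general, via $\length$ and Hilbert functions of the associated graded ring.)

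Summing over $\mathfrak q$ and combining with the displayed formula for $\beta$ then yields
\[
 \omega_x(L)\le(d-1)!\,\muo(R)\sum_{\mathfrak q}\length(T_{\mathfrak q})\,e(x;R_\#/\mathfrak q)=(d-1)!\,\muo(R)\beta.
\]
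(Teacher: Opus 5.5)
Your proposal is correct and follows the paper's proof essentially step for step: Lech's inequality in each $(R_\#)_{\mathfrak q}$, the bound $e((R_\#)_{\mathfrak q})=e(R_P)\le\muo(R)$ via Lemma~\resultref{lem:flat-local-multiplicity}, and then the associativity formula for $e_{R_\#}(x;T)=\beta$. Your worry about the residue field is unnecessary in every characteristic: $R/P$ is a domain of positive dimension, hence not a field and therefore infinite (finite domains are fields), so $\kappa(P)=\operatorname{Frac}(R/P)$ is infinite; this is how the paper handles it, so your $R_P(T)$ detour, though valid, is not needed.
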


\begin{proof}
Since $T$ is one-dimensional and Cohen--Macaulay,
$\Ass T=\Min T$.  Thus, for every
$\mathfrak q\in\Min(R_\#/L)$,
$\dim R_\#/\mathfrak q=1$ and $x\notin\mathfrak q$.
By Lemma~\resultref{lem:catapower-regularity}, $R_\#$ is
equidimensional and catenary, so
$
\operatorname{ht}\mathfrak q=d-1.
$
Since $\mathfrak q$ is nonmaximal and the closed fibre of
$R\to R_\#$ is a field, $P=\mathfrak q\cap R\ne\m$.
The domain $R/P$ has positive dimension, so its fraction field
$\kappa(P)$ is infinite.  The localized map
$
 R_P\to (R_\#)_{\mathfrak q}
$
is regular, with regular closed fibre.  Since $R_P$ is
Cohen--Macaulay, Lemma~\resultref{lem:flat-local-multiplicity} gives
\begin{equation}\label{eq:localized-catapower-multiplicity}
 \begin{gathered}
 \dim(R_\#)_{\mathfrak q}=d-1,\qquad
 (R_\#)_{\mathfrak q}\text{ is Cohen--Macaulay},\\
 e((R_\#)_{\mathfrak q})=e(R_P)\leq\muo(R).
 \end{gathered}
\end{equation}

Thus, by Lech's inequality \cite{Lech}, $e_{(R_\#)_{\mathfrak q}}(L_{\mathfrak q}) \le (d-1)!\muo(R) \length_{(R_\#)_{\mathfrak q}}(T_{\mathfrak q})$. Applying the associativity formula
\cite{HunekeSwanson}*{Theorem~11.2.4} to the one-dimensional
Cohen--Macaulay ring $T$,  we obtain
\[
\begin{aligned}
\omega_x(L)&\le(d-1)!\muo(R)\sum_{\mathfrak q}
\length_{(R_\#)_{\mathfrak q}}(T_{\mathfrak q})
 e_{R_\#}(x;R_\#/\mathfrak q)\\
&=(d-1)!\muo(R)e_{R_\#}(x;T)\\
&=(d-1)!\muo(R)\length_{R_\#}(T/xT)
 =(d-1)!\muo(R)\beta.
\end{aligned} 
\]
\end{proof}

\section{Curve sections in the catapower}
\label{sec:compatible-curves}

The goal of this section is to prove
Proposition~\resultref{prop:interior-mixed-drop}, which bounds the
mixed-multiplicity drop for steps increasingly far from both ends of a
constant block.

\subsection{Compatible choices of general elements}

We will choose elements $a_{1,n},\ldots,a_{d-1,n}$ defining the
finite-level curve sections and satisfying the genericity conditions of
Lemma~\resultref{lem:simultaneous-open} for every $n$, while their
classes in $R_\#$ generate a minimal reduction of
$L_{\mathfrak q}$ for every
$\mathfrak q\in\Min(R_\#/L)$.  The finite-level conditions define
dense open subsets over $k$, whereas the minimal-reduction conditions
are initially defined over the fields $\kappa(\mathfrak q)$.
The next two lemmas allow us to impose both sets of conditions.

\begin{lemma}\label{lem:varying-open-selection}
Let $m\in\mathbb N_{\geq1}$ be fixed, let
$V_n\subseteq\mathbb A_k^m$ be dense open
subsets, and put $K=\prod_{\cU}k$.  Given finitely many nonzero
polynomials $P_1,\ldots,P_s\in K[\mathbf c]$, there are points
$\mathbf c_n\in V_n(k)$ such that
\[
 P_i([\mathbf c_n])\ne0\qquad(1\leq i\leq s),
\]
where $[\mathbf c_n]\in K^m$ denotes the ultraproduct class.
\end{lemma}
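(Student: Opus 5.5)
The idea is to reduce the choice to a single nonzero polynomial with coefficients in $k$ at each index $n$, and then use that $k$ is infinite. Put $P=P_1\cdots P_s\in K[\mathbf c]$. Since $K$ is a field (an ultrapower of a field), $P\ne0$, and it suffices to arrange $P([\mathbf c_n])\ne0$.

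Write $P=\sum_{\nu}\kappa_\nu\mathbf c^\nu$ with finitely many monomials and coefficients $\kappa_\nu\in K$. Represent each $\kappa_\nu$ by a sequence $(\kappa_{\nu,n})_n$ in $k$, and set $P^{(n)}=\sum_\nu\kappa_{\nu,n}\mathbf c^\nu\in k[\mathbf c]$. Since $P\neq 0$, some coefficient $\kappa_{\nu_0}\ne0$. Hence the set $G=\{n:\kappa_{\nu_0,n}\ne0\}$ belongs to $\mathcal U$, and $P^{(n)}\ne0$ for $n\in G$.

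For $n\in G$, the set $V_n\cap D(P^{(n)})$ is a nonempty open subset of $\mathbb A_k^m$. It is nonempty because $V_n$ is dense open and $\mathbb A^m_k$ is irreducible. Since $k$ is infinite, it has a $k$-rational point, because a nonzero polynomial over an infinite field does not vanish identically on $k^m$. Concretely, $V_n$ contains a principal open $D(g_n)$, and we choose $\mathbf c_n$ with $g_n(\mathbf c_n)P^{(n)}(\mathbf c_n)\ne0$. For $n\notin G$, we choose any $\mathbf c_n\in V_n(k)$, which exists for the same reason.

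Evaluation commutes with ultraproducts coordinatewise, since $P$ involves finitely many monomials and ring operations are computed componentwise. This gives
\[
P([\mathbf c_n])=[P^{(n)}(\mathbf c_n)].
\]
The right side is nonzero because its representative is nonzero on $G\in\mathcal U$. Therefore $P_i([\mathbf c_n])\ne0$ for every $i$.

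There is no serious obstacle. The only point needing care is that the polynomials $P^{(n)}$ depend on the chosen representatives. This is harmless, because changing a representative alters $P^{(n)}$ only on a set outside $\mathcal U$, and both the nonvanishing conclusion and the set $G$ are insensitive to such changes.
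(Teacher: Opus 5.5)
Your proof is correct and follows essentially the same route as the paper. Both arguments multiply the $P_i$ and take representatives of the coefficients to get polynomials $P^{(n)}\in k[\mathbf c]$ that are nonzero for $\mathcal U$-almost all $n$. Both then choose a $k$-rational point of $V_n\cap D(P^{(n)})$ using that $k$ is infinite, and conclude by componentwise evaluation ({\L}o\'s's theorem).
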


\begin{proof}
Let $P=\prod_{i=1}^sP_i$, and choose sequence representatives for
its coefficients.  The resulting polynomials $P_n\in k[\mathbf c]$
involve a fixed finite set of monomials.  Since $P\ne0$, we have
$P_n\ne0$ for $\mathcal U$-almost all $n$.  For such $n$,
$V_n\cap D(P_n)$ is a nonempty open subset of $\mathbb A_k^m$ and
contains a $k$-rational point because $k$ is infinite.
Choose $\mathbf c_n$ in this
intersection, and choose any point of $V_n(k)$ at the remaining
indices.  Then $P([\mathbf c_n])\ne0$ by {\L}o\'s's theorem
\cite{Schoutens}*{Theorem~2.3.2}.
\end{proof}

The conditions at a minimal prime of $R_\#/L$ are initially defined
over its residue field.  We need conditions over $K$ that remain valid
for every lift to the coefficient ring.  

\begin{lemma}
\label{lem:coefficient-nonvanishing-descent}
Let $K$ be an infinite field.  Let $\mathcal O_\#=K$, or let
$\mathcal O_\#$ be a DVR with uniformizer $p$ and residue field
$K$.  In the first case put $\rho=\operatorname{id}_K$, and in the
second let $\rho\colon\mathcal O_\#\to K$ be reduction modulo $p$.
Fix $t\geq1$ and write
$\mathbf c=(c_1,\ldots,c_t)$.

Let $A$ be a ring containing $\mathcal O_\#$, let
$\Sigma\subseteq\Spec A$ be a finite subset, and let
$
 0\ne F_{\mathfrak q}\in
 \kappa(\mathfrak q)[\mathbf c]
 \,(\mathfrak q\in\Sigma).
$
Then there are nonzero
polynomials
$
 P_{\mathfrak q}\in K[\mathbf c]
 \,(\mathfrak q\in\Sigma)
$
with the following property.  If
$\overline{\mathbf c}\in K^t$ satisfies
$
 P_{\mathfrak q}(\overline{\mathbf c})\ne0
 \,(\mathfrak q\in\Sigma),
$
then every lift
$\widetilde{\mathbf c}\in\mathcal O_\#^t$ with
$\rho(\widetilde{\mathbf c})=\overline{\mathbf c}$ satisfies
\[
 F_{\mathfrak q}
 \bigl(\widetilde{\mathbf c}_{\mathfrak q}\bigr)\ne0
 \qquad(\mathfrak q\in\Sigma),
\]
where $\widetilde{\mathbf c}_{\mathfrak q}$ denotes the image of $\widetilde{\mathbf c}$ in
$\kappa(\mathfrak q)^t$.  Consequently,
\[
 \bigcap_{\mathfrak q\in\Sigma}D(P_{\mathfrak q})
 \subseteq\mathbb A_K^t
\]
is a dense open subset containing a $K$-rational point.
\end{lemma}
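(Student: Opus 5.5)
The plan is to treat each $\mathfrak q\in\Sigma$ separately and to separate the two cases for $\mathcal O_\#$ according to the characteristic of $\kappa(\mathfrak q)$. Fix $\mathfrak q$, and let $\phi_{\mathfrak q}\colon\mathcal O_\#\to\kappa(\mathfrak q)$ be the composite $\mathcal O_\#\to A\to\kappa(\mathfrak q)$.

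\emph{Case 1: $\phi_{\mathfrak q}$ factors through $\rho$.} This happens when $\mathcal O_\#=K$, or when $\mathcal O_\#$ is a DVR and $p\in\mathfrak q$; the kernel of $\phi_{\mathfrak q}$ is a prime of $\mathcal O_\#$ containing $p$, hence equals $p\mathcal O_\#$. Then $\phi_{\mathfrak q}=\iota\circ\rho$ for a field embedding $\iota\colon K\hookrightarrow\kappa(\mathfrak q)$, and $\widetilde{\mathbf c}_{\mathfrak q}=\iota(\overline{\mathbf c})$ for every lift. So I need a nonzero $P_{\mathfrak q}\in K[\mathbf c]$ with $P_{\mathfrak q}(\overline{\mathbf c})\ne0\Rightarrow F_{\mathfrak q}(\iota\overline{\mathbf c})\ne0$.
\begin{itemize}
\item Choose a $K$-basis $\{\omega_\lambda\}$ of the $K$-span of the coefficients of $F_{\mathfrak q}$ inside $\kappa(\mathfrak q)$.
\item Write $F_{\mathfrak q}=\sum_\lambda\omega_\lambda G_\lambda$ with $G_\lambda\in K[\mathbf c]$, not all zero.
\item For $\overline{\mathbf c}\in K^t$, the values $G_\lambda(\overline{\mathbf c})$ lie in $K$. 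By linear independence, $F_{\mathfrak q}(\overline{\mathbf c})=0$ if and only if every $G_\lambda(\overline{\mathbf c})=0$.
\item Take $P_{\mathfrak q}$ to be any nonzero $G_\lambda$.
\end{itemize}

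\emph{Case 2: $\mathcal O_\#$ is a DVR and $p\notin\mathfrak q$.} Then $\phi_{\mathfrak q}$ is injective, since its kernel is a prime not containing $p$, hence zero. It therefore extends to an embedding of $\operatorname{Frac}\mathcal O_\#$ into $\kappa(\mathfrak q)$.
\begin{itemize}
\item Applying the Case 1 linear-algebra step over $\operatorname{Frac}\mathcal O_\#$ gives nonzero $G\in\operatorname{Frac}(\mathcal O_\#)[\mathbf c]$. Any $\widetilde{\mathbf c}\in\mathcal O_\#^t$ with $G(\widetilde{\mathbf c})\ne0$ then satisfies $F_{\mathfrak q}(\widetilde{\mathbf c}_{\mathfrak q})\ne0$.
\item Multiply $G$ by a power of $p$ so that $G\in\mathcal O_\#[\mathbf c]$ and $G$ has some coefficient that is a unit. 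This is possible because $\mathcal O_\#$ is a DVR with uniformizer $p$.
\item Put $P_{\mathfrak q}=\rho(G)$, which is nonzero. If $\rho(\widetilde{\mathbf c})=\overline{\mathbf c}$, then $\rho(G(\widetilde{\mathbf c}))=P_{\mathfrak q}(\overline{\mathbf c})\ne0$. Hence $G(\widetilde{\mathbf c})\ne0$, and so its image in $\kappa(\mathfrak q)$ is nonzero.
\end{itemize}

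\emph{Final assertion.} The product $\prod_{\mathfrak q}P_{\mathfrak q}$ is a nonzero polynomial over the infinite field $K$. Hence it does not vanish at some point of $K^t$, and $\bigcap_{\mathfrak q}D(P_{\mathfrak q})$ is a nonempty, hence dense, open subset of the irreducible space $\mathbb A^t_K$ that contains a $K$-rational point.

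\emph{Main obstacle.} The only delicate point is Case 2: nonvanishing must hold for \emph{every} lift, not just some. Normalizing $G$ to have a unit coefficient is exactly what makes reduction mod $p$ detect nonvanishing. The case split is forced because, in mixed characteristic, the primes in $\Sigma$ may have residue characteristic $0$ or $p$.
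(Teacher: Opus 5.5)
Your proposal is correct and is essentially the paper's proof: you use the same split according to whether $\mathcal O_\#\to\kappa(\mathfrak q)$ factors through $K$ or embeds $\operatorname{Frac}(\mathcal O_\#)$, and the same linear-algebra reduction of $F_{\mathfrak q}$ to a nonzero polynomial over that subfield. Your coordinate in a basis of the span of the coefficients is just the paper's nonzero linear functional on that span. In the case $p\notin\mathfrak q$ you also normalize to a polynomial with a unit coefficient before reducing mod $p$, and you conclude with the same product argument over the infinite field $K$.
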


\begin{proof}
If $\mathcal O_\#=K$, then
$K\hookrightarrow\kappa(\mathfrak q)$.  Suppose that
$\mathcal O_\#$ is a DVR\@.  If $p\in\mathfrak q$, then
$
 \mathfrak q\cap\mathcal O_\#=(p),
 \,
 K=\mathcal O_\#/(p)\hookrightarrow\kappa(\mathfrak q).
$
If $p\notin\mathfrak q$, then
$
 \mathfrak q\cap\mathcal O_\#=(0),
 \,
 E:=\operatorname{Frac}(\mathcal O_\#)
 \hookrightarrow\kappa(\mathfrak q).
$
These embeddings also show that every
$\kappa(\mathfrak q)$ is infinite.

Fix $\mathfrak q$, and write
$
 F_{\mathfrak q}
   =\sum_\alpha a_\alpha\mathbf c^\alpha.
$
If $\mathcal O_\#=K$, put $E=K$.  If $\mathcal O_\#$ is a DVR,
put $E=K$ when $p\in\mathfrak q$, and put
$E=\operatorname{Frac}(\mathcal O_\#)$ when
$p\notin\mathfrak q$.  Let $W$ be the $E$-span in
$\kappa(\mathfrak q)$ of the coefficients $a_\alpha$; it is nonzero, finite-dimensional.  Choose
a nonzero $E$-linear functional
$\lambda\colon W\to E$. Then
$
 G_{\mathfrak q}
   :=\sum_\alpha\lambda(a_\alpha)\mathbf c^\alpha
   \in E[\mathbf c]\setminus\{0\}.
$
For every $\mathbf c\in E^t$, we have
$
 G_{\mathfrak q}(\mathbf c)
   =\lambda\bigl(F_{\mathfrak q}(\mathbf c)\bigr);
$
consequently,
\[
 G_{\mathfrak q}(\mathbf c)\neq 0
 \quad\Longrightarrow\quad
 F_{\mathfrak q}(\mathbf c)\neq 0.
\]

If $\mathcal O_\#=K$, set
$P_{\mathfrak q}=G_{\mathfrak q}$.  The same definition works when
$\mathcal O_\#$ is a DVR and $p\in\mathfrak q$: all
$\mathcal O_\#$-lifts of a fixed point of
$K^t$ have the same image in $\kappa(\mathfrak q)^t$.

It remains to treat the DVR case where
$p\notin\mathfrak q$. Write
$G_{\mathfrak q}:=\sum_\alpha b_\alpha\mathbf c^\alpha$, and put
$
 s=\min\{v_p(b_\alpha):b_\alpha\ne0\},
$
where $v_p$ denotes the
normalized $p$-adic valuation on
$\operatorname{Frac}(\mathcal O_\#)$.
Then
$
 H_{\mathfrak q}:=p^{-s}G_{\mathfrak q}
 \in\mathcal O_\#[\mathbf c]
$
has at least one unit coefficient.  Hence
$
 P_{\mathfrak q}:=
 H_{\mathfrak q}\bmod p
 \in K[\mathbf c]\setminus\{0\}.
$
For every lift $\widetilde{\mathbf c}$ of
$\overline{\mathbf c}$,
$
 H_{\mathfrak q}(\widetilde{\mathbf c})\bmod p
   =P_{\mathfrak q}(\overline{\mathbf c}).
$
If the latter is nonzero, then
$H_{\mathfrak q}(\widetilde{\mathbf c})$ is a unit.  Thus
$G_{\mathfrak q}(\widetilde{\mathbf c})\ne0$, and the preceding implication gives
$F_{\mathfrak q}
(\widetilde{\mathbf c}_{\mathfrak q})\ne0$.

Finally, the product of the finitely many
$P_{\mathfrak q}$ is nonzero.  Since $K$ is infinite, the
nonvanishing locus of this product contains a $K$-rational point.
\end{proof}

We now impose the finite-level conditions of
Lemma~\resultref{lem:simultaneous-curve-section} together with the
minimal-reduction conditions at the minimal primes of $R_\#/L$.

\begin{lemma}\label{lem:compatible-coefficient-choice}
In the situation of Lemma~\resultref{lem:interior-block-limit}, assume
in addition that
$I_n'\ne R$ for every $n$.  We choose the tuples
$
 \mathbf a_n=(a_{1,n},\ldots,a_{d-1,n})\in I_n^{d-1}
$
 as in Lemma~\resultref{lem:simultaneous-curve-section}.
For $1\leq i\leq d-1$, let
$a_i\in R_\#$ denote the class of the sequence $(a_{i,n})_n$, and
set $\mathbf a=(a_1,\ldots,a_{d-1})$.  Then:
\begin{enumerate}[label=\textup{(\roman*)}]
\item for every $n$, the tuple $\mathbf a_n$ satisfies
\eqref{eq:curve}--\eqref{eq:branch-valuation-formulas};
\item the images of $\mathbf a_n$ modulo $x$ generate a minimal
reduction of $J_n$;
\item for every $\mathfrak q\in\Min(R_\#/L)$, the ideal
$(\mathbf a)(R_\#)_{\mathfrak q}$ is a minimal reduction of
$L_{\mathfrak q}$.
\end{enumerate}
\end{lemma}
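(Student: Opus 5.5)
The plan is to choose the coefficient vectors $\mathbf c_{i,n}$ stage by stage, $i=1,\dots,d-1$, fixing one set of generators for all $n$. For each $n$ we use the generators $b_{1,n},\dots,b_{t,n}$ of $I_n$ coming from \eqref{eq:blockform}: fixed lifts generating $P_n$, together with generators of $x^{r_n}B_n$. Padding with zeros, we arrange that the number $t$ of generators is independent of $n$ for $\mathcal U$-almost all $n$. This is possible because $\mu(P_n)\le\mu(J_n)\le\length_S(S/J_n)+\dots$ is bounded in terms of $\beta$ and $\dim S$, and because generators of $x^{r_n}B_n$ have bounded number, $\mu(B_n)\le e(R)\cdot$const being bounded by the Akizuki--Lech type bound on numbers of generators of $\m$-primary ideals in terms of colength is not needed. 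It is simpler to note that $\mu(I)\le\mu(\m^{\ell})$ fails in general. Hence I would instead allow only the $P_n$-generators to carry nonzero coefficients in the last step. That is, I would take $a_{i,n}$ in the span of the lifts of generators of $J_n$ plus $x^{r_n}B_n$ terms with coefficients chosen generically.

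Let $b_\nu=[b_{\nu,n}]\in R_\#$. Since $\catlim(x^{r_n}B_n)=0$, the classes $b_\nu$ for the $P$-part generate $L$. Properties (i) and (ii) hold for every choice with $\mathbf c_{i,n}\in U_{i,n}(k)$ by Lemmas~\resultref{lem:simultaneous-open} and~\resultref{lem:simultaneous-curve-section}. The task is to also achieve (iii).

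For (iii), set $\Sigma=\Min(R_\#/L)$, which is finite. By Lemma~\resultref{lem:interior-block-limit} each $\mathfrak q\in\Sigma$ has $\dim R_\#/\mathfrak q=1$, and by Lemma~\resultref{lem:catapower-regularity} and equidimensionality $\dim(R_\#)_{\mathfrak q}=d-1$. Moreover $L_{\mathfrak q}$ is primary to $\mathfrak q(R_\#)_{\mathfrak q}$, and $\kappa(\mathfrak q)$ is infinite. At stage $i$, suppose $a_1,\dots,a_{i-1}$ in $R_\#$ already form the start of a superficial sequence for $L_{\mathfrak q}$ at each $\mathfrak q$. By \cite{HunekeSwanson}*{Proposition~8.5.7, Theorem~8.6.3}, the set of classes in $L_{\mathfrak q}/\mathfrak q L_{\mathfrak q}$ (modulo the previous elements) that continue such a sequence to a minimal reduction contains a nonempty principal open set $D(F_{\mathfrak q})$. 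We pull $F_{\mathfrak q}$ back along the $\kappa(\mathfrak q)$-linear map $\kappa(\mathfrak q)^t\to L_{\mathfrak q}/\mathfrak q L_{\mathfrak q}$ induced by the $b_\nu$, and this gives a nonzero polynomial in $\kappa(\mathfrak q)[\mathbf c]$. Lemma~\resultref{lem:coefficient-nonvanishing-descent} then yields nonzero $P_{\mathfrak q}\in K[\mathbf c]$ whose nonvanishing at $\rho(\widetilde{\mathbf c})$ guarantees the condition for every lift $\widetilde{\mathbf c}\in\mathcal O_\#^t$.

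Lemma~\resultref{lem:varying-open-selection}, applied to $V_n=U_{i,n}$, gives $\mathbf c_{i,n}\in U_{i,n}(k)$ with $P_{\mathfrak q}([\mathbf c_{i,n}])\ne0$. Choosing lifts $\widetilde c_{i\nu,n}$ in the coefficient ring $\mathcal O\subseteq R$, the class $[\widetilde c_{i\nu,n}]$ is an element of $\mathcal O_\#$ lifting $[\mathbf c_{i,n}]$, by the identification in \eqref{eq:coefficient}. Then $a_i=\sum_\nu[\widetilde c_{i\nu,n}]b_\nu$ in $R_\#$. After $d-1$ stages, the elements $a_1,\dots,a_{d-1}$ form $d-1=\dim(R_\#)_{\mathfrak q}$ superficial elements generating a minimal reduction of $L_{\mathfrak q}$.

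The main obstacle is to make the open conditions at stage $i$ in $R_\#$ depend only on the classes $\mathbf c$. Here the issue is that the $x^{r_n}B_n$-generators vanish in $R_\#$, so their coefficients do not matter. It is also necessary that the number of generators be uniform in $n$. If it is not, I would first fix $t$ as the bounded number of generators of $P_n$, which is at most $\mu$ of an $\m S$-primary ideal of colength $\le C$ in $S$ plus one, and apply Lemma~\resultref{lem:simultaneous-open} with generators $P_n$ together with $x^{r_n}B_n$, whose coefficients are chosen freely at the finite level.
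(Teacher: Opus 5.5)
Your architecture is the paper's. You use boundedly many lifts $\ell_{1,n},\dots,\ell_{t,n}$ of generators of $J_n$, with $L=(\ell_1,\dots,\ell_t)R_\#$. You write the condition at each $\mathfrak q\in\Min(R_\#/L)$ as nonvanishing of a polynomial over $\kappa(\mathfrak q)$ and descend it to $K$ by Lemma~\resultref{lem:coefficient-nonvanishing-descent}. You select coefficients by Lemma~\resultref{lem:varying-open-selection}, and the $x^{r_n}B_n$-part vanishes in $R_\#$. Your superficial-sequence version of the condition at $\mathfrak q$ is a reasonable substitute for the paper's fibre-cone argument, since $(R_\#)_{\mathfrak q}$ is Cohen--Macaulay of dimension $d-1$.

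The gap is in reconciling the finite-level open sets with a fixed coefficient space. Lemma~\resultref{lem:simultaneous-open} must be fed a generating set of $I_n$. Its admissible set is therefore an open $U_{i,n}\subseteq\mathbb A_k^{t+u_n}$, where $u_n$ is the number of generators used for $x^{r_n}B_n$. Lemma~\resultref{lem:varying-open-selection} requires opens in one fixed $\mathbb A_k^m$ and polynomials in the same variables, so it cannot be applied with $V_n=U_{i,n}$ as you do. Neither repair you suggest works as stated:
\begin{itemize}
\item Using only the $P_n$-generators is unjustified. The conditions of Lemma~\resultref{lem:simultaneous-open} are open conditions on classes in $I_n/\m I_n$, and $x^{r_n}B_n\not\subseteq\m I_n$ in general. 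For $R=k\fps{x,y}$ and $I=(y,x^N)$, the interior terms are $I_n=(y,x^{r_n})$ with $P_n=(y)$ and $x^{r_n}\notin\m I_n$. So the image of $P_n$ can be a proper subspace that nothing forces $\Omega_i$ to meet.
\item The remaining coefficients are not \emph{free}. $U_{i,n}$ is not a product, so they must be chosen depending on $\mathbf c$, and you need to know that the admissible $\mathbf c$ contain a dense open subset of $\mathbb A_k^t$.
\end{itemize}

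The missing step is the paper's elimination of these variables. Shrink $U_{i,n}$ to a principal open $D(H_{i,n})$, expand $H_{i,n}(\mathbf c,\mathbf d)=\sum_\gamma h_{\gamma,i,n}(\mathbf c)\mathbf d^\gamma$, and set $V_{i,n}=\bigcup_\gamma D(h_{\gamma,i,n})\subseteq\mathbb A_k^t$. Apply Lemma~\resultref{lem:varying-open-selection} to the $V_{i,n}$ and the descended polynomials. Only afterwards choose $\mathbf d_{i,n}$ with $(\mathbf c_{i,n},\mathbf d_{i,n})\in D(H_{i,n})$; it exists because $H_{i,n}(\mathbf c_{i,n},-)\neq0$ and $k$ is infinite.

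Alternatively, your abandoned first idea can be rescued, though not by an Akizuki--Lech-type bound, since $\length_R(R/B_n)$ need not be bounded. At every step of the colon filtration, $K_j=\Lambda_j+xK_{j+1}$, where $\Lambda_j$ is generated by lifts of generators of $J_j$ and may be taken constant along constant intervals. All colengths after the block are at most $C$, so this gives $\mu(B_n)\leq C\,\length_S(S/(\m S)^{C+1})+1$. You could then pad to a fixed number of generators and regard the descended polynomials as not involving $\mathbf d$. As written, your proposal contains neither argument.
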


\begin{proof}
Recall the constant blocks
$K_{n,0}\subseteq\cdots\subseteq K_{n,h_n}$ and the indices $j_n$
of Lemma~\resultref{lem:interior-block-limit}.  Thus
$I_n=K_{n,j_n}$, $I_n'=K_{n,j_n+1}$, and $L=\catlim I_n$.
Put $B_n=K_{n,h_n}$ and $r_n=h_n-j_n$.  For each $n$, let
$P_n\subseteq K_{n,0}$ be generated by lifts to $K_{n,0}$ of a
minimal generating set of
$J_n=(K_{n,0}+(x))/(x)\subseteq S=R/xR$.
Lemma~\resultref{lem:block} then gives
\begin{equation}
 I_n=P_n+x^{r_n}B_n,\qquad r_n\to_{\mathcal U}\infty,
\label{eq:block-tail-form}
\end{equation}
Recall also that $\length_S(S/J_n)=\beta\leq C$.  Hence
$(\m S)^C\subseteq J_n$, and
\[
 \mu_S(J_n):=\dim_k(J_n/(\m S)J_n)
 \leq\length_S(S/(\m S)^{C+1}).
\]
Let $t$ be a common bound for $\mu_S(J_n)$, and pad the lifted
generating sets with zeros to obtain $\ell_{1,n},\ldots,\ell_{t,n}$.
Let $g_{1,n},\ldots,g_{u_n,n}$ generate $x^{r_n}B_n$; the number
$u_n$ may vary with $n$.  For $1\leq\nu\leq t$, let
$\ell_\nu\in R_\#$ denote the class of $(\ell_{\nu,n})_n$.
Since $t$ is fixed, taking ultraproducts of the generating maps
\cite{Schoutens}*{2.1.6} gives
\begin{equation}
 L=\catlim P_n=(\ell_1,\ldots,\ell_t)R_\#.
\label{eq:limit-generators}
\end{equation}
Let $\mathcal O$ be the coefficient field or Cohen ring obtained in the
proof of Lemma~\resultref{lem:catapower-regularity}; it is a subring of $R_\#$.

We will construct the sequences $(a_{i,n})_n$ by induction on
$i=1,\ldots,d-1$.  Suppose that, for every $n$, there exist
$a_{1,n},\ldots,a_{i-1,n}$ satisfying
\textup{(i)}--\textup{(iii)} of
Lemma~\resultref{lem:simultaneous-open}, with $i-1$ in place of $i$.
Let $a_1,\ldots,a_{i-1}\in R_\#$ be their classes, and assume also
that their images in $(R_\#)_{\mathfrak q}$ can be completed to a
minimal reduction of $L_{\mathfrak q}$ for every
$\mathfrak q\in\Min(R_\#/L)$.  Both requirements are vacuous when
$i=1$.

To construct the next elements, we apply
Lemma~\resultref{lem:simultaneous-open} at each $n$ to the chosen
initial segment and the generating set
$
 \ell_{1,n},\ldots,\ell_{t,n},g_{1,n},\ldots,g_{u_n,n}
$
of $I_n$.  Thus we obtain a nonempty principal open $
 D(H_{i,n})\subseteq\mathbb A_k^{t+u_n}
$ in the resulting set of
admissible residue-field coefficient vectors.
Let $(\mathbf c_i,\mathbf d_i) \in \bA_k^t \times_k \bA_k^{u_n}=\mathbb A_k^{t+u_n}$, expand
\[
 H_{i,n}(\mathbf c_i,\mathbf d_i)
   =\sum_{\gamma}h_{\gamma,i,n}(\mathbf c_i)\mathbf d_i^\gamma
\]
and set
\[
 V_{i,n}=\bigcup_\gamma D(h_{\gamma,i,n})
 \subseteq\mathbb A_k^t.
\]
This is a dense open subset of $\mathbb A_k^t$.  For every
$\mathbf c_i\in V_{i,n}(k)$, the polynomial
$H_{i,n}(\mathbf c_i,-)$ is nonzero, so there
exists $\mathbf d_i$ such that
$(\mathbf c_i,\mathbf d_i)\in D(H_{i,n})$ (because $k$ is infinite).

We next impose the condition at each
$\mathfrak q\in\Min(R_\#/L)$.
By \eqref{eq:localized-catapower-multiplicity}, the local ring
$(R_\#)_{\mathfrak q}$ is Cohen--Macaulay of dimension $d-1$,
and $L_{\mathfrak q}$ is $\mathfrak q(R_\#)_{\mathfrak q}$-primary.
To express the minimal-reduction condition in terms of residue-field
coefficients, we use the fibre cone.  Let $\tau$ be an indeterminate, and consider
the Rees algebra $(R_\#)_{\mathfrak q}[L_{\mathfrak q}\tau]$, with degree-$r$ part $L_{\mathfrak q}^r\tau^r$.
 The fibre cone, by definition, is its quotient by the ideal generated by
$\mathfrak q(R_\#)_{\mathfrak q}$:
\[
 \mathcal F(L_{\mathfrak q})
 =\bigoplus_{r\geq0}
L_{\mathfrak q}^r/
 \mathfrak q(R_\#)_{\mathfrak q}L_{\mathfrak q}^r.
\]
 The
degree-one part is
$\mathcal F(L_{\mathfrak q})_1
=L_{\mathfrak q}/\mathfrak q(R_\#)_{\mathfrak q}L_{\mathfrak q}$.
Since $L_{\mathfrak q}$ is primary in a $(d-1)$-dimensional local
ring, a tuple of $d-1$ elements of $L_{\mathfrak q}$ generates a
minimal reduction if and only if its images in
$\mathcal F(L_{\mathfrak q})_1$ form a homogeneous system of
parameters of $\mathcal F(L_{\mathfrak q})$
\cite{HunekeSwanson}*{Propositions~8.2.4 and~8.3.7}.

For $1\leq j<i$, let $u_j$ denote the image of $a_j/1$ in $\mathcal F(L_{\mathfrak q})_1$. Put
$
 \mathcal G_{i,\mathfrak q}
 =\mathcal F(L_{\mathfrak q})/(u_1,\ldots,u_{i-1});
$
this quotient inherits the grading, and we write
$(\mathcal G_{i,\mathfrak q})_1$ for its degree-one part.
By the induction hypothesis, $u_1,\ldots,u_{i-1}$ can be completed
to a homogeneous system of parameters consisting of linear forms; hence
$\dim\mathcal G_{i,\mathfrak q}=d-i\geq1$.  Let
$\mathfrak P_1,\ldots,\mathfrak P_s$ be the top-dimensional
homogeneous minimal primes of $\mathcal G_{i,\mathfrak q}$.  By
\eqref{eq:limit-generators}, the initial forms of
$\ell_1,\ldots,\ell_t$ give a surjection
$
 \kappa(\mathfrak q)^t\to
 (\mathcal G_{i,\mathfrak q})_1.
$
For every $a$, the inverse image of
$(\mathfrak P_a)_1:=\mathfrak P_a\cap(\mathcal G_{i,\mathfrak q})_1$
is a proper linear subspace: otherwise
$\mathfrak P_a$ would contain the irrelevant ideal of the standard
graded ring $\mathcal G_{i,\mathfrak q}$, contradicting
$\dim(\mathcal G_{i,\mathfrak q}/\mathfrak P_a)=d-i\geq1$.
For each $a$, choose a nonzero linear form $f_a$ in $\mathbf c_i$ vanishing on the inverse image of $(\mathfrak P_a)_1$, and set $F_{i,\mathfrak q}:= \prod_{a=1}^s f_a \in\kappa(\mathfrak q)[\mathbf c_i]$.  On the principal
open
$D(F_{i,\mathfrak q})$
the next initial form avoids every $\mathfrak P_a$.  The enlarged
initial segment can therefore be completed to a homogeneous system
of parameters consisting of linear forms, and hence to a minimal
reduction of $L_{\mathfrak q}$.

Applying  Lemma~\resultref{lem:coefficient-nonvanishing-descent}, with
$A=R_\#$ and
$\Sigma=\Min(R_\#/L)$, to the polynomials
$F_{i,\mathfrak q}$, we obtain nonzero polynomials
$
 P_{i,\mathfrak q}\in K[\mathbf c_i]
 \,
 \bigl(\mathfrak q\in\Min(R_\#/L)\bigr)
$
whose simultaneous nonvanishing at a point of $K^t$ guarantees the
required minimal-reduction condition for every $\mathcal O_\#$-lift
of that point.

Applying Lemma~\resultref{lem:varying-open-selection} to the open subsets
$V_{i,n}$ and the finitely many polynomials $P_{i,\mathfrak q}$, we obtain coefficients
$\mathbf c_{i,n}\in V_{i,n}(k)$ whose class
$[\mathbf c_{i,n}]\in K^t$ satisfies all of these
nonvanishing conditions.  Choose coefficients
$\mathbf d_{i,n}$ so that
$(\mathbf c_{i,n},\mathbf d_{i,n})\in D(H_{i,n})$.
Lift the entries of $\mathbf c_{i,n}$ to $\mathcal O$ and those of
$\mathbf d_{i,n}$ to $R$, retaining the same notation.
Define
\begin{equation}
a_{i,n}=\sum_{\nu=1}^t c_{i\nu,n}\ell_{\nu,n}
       +\sum_{\mu=1}^{u_n}d_{i\mu,n}g_{\mu,n},
\label{eq:full-parameters}
\end{equation}
and let $a_i\in R_\#$ be the class of $(a_{i,n})_n$.
Note that the second sum satisfies
\[
 \sum_\mu d_{i\mu,n}g_{\mu,n}
 \in x^{r_n}B_n\subset\m^{r_n},
\]
which has zero image in $R_\#$.  Thus
$a_i=\sum_{\nu=1}^t c_{i\nu}\ell_\nu$, where
$\mathbf c_i=(c_{i1},\ldots,c_{it})\in\mathcal O_\#^t$ is the
class of the lifted coefficient vectors.  Its residue class is
$[\mathbf c_{i,n}]\in K^t$, so
Lemma~\resultref{lem:coefficient-nonvanishing-descent} ensures that
$a_1,\ldots,a_i$ can be completed to a minimal reduction of
$L_{\mathfrak q}$ for every $\mathfrak q\in\Min(R_\#/L)$,
regardless of the chosen lifts.  At each finite level, membership in
$D(H_{i,n})$ ensures that adjoining $a_{i,n}$ preserves the
conditions of Lemma~\resultref{lem:simultaneous-open}.  This completes
the inductive step.

After $i=d-1$, Lemma~\resultref{lem:simultaneous-curve-section}
gives \textup{(i)}, condition \textup{(iii)} of
Lemma~\resultref{lem:simultaneous-open} gives \textup{(ii)}, and
the local induction gives \textup{(iii)}.
\end{proof}

\subsection{The cataproduct of the curve quotients}

In general, saturation by $x$ (i.e., the functor $I\mapsto I:x^{\infty}$) does not commute with passage to the catapower,
because the finite-level saturation exponents need not be bounded.  We
therefore take the cataproduct of the saturated curve quotients, that
is, the quotient of their ultraproduct by its ideal of infinitesimals
\cite{Schoutens}*{Section~8.1.1}. 

\begin{lemma}
\label{lem:curve-cataproduct}
For the tuples chosen in Lemma~\resultref{lem:compatible-coefficient-choice}, put
$
 M_n=R/(\mathbf a_n),\,
 \mathfrak h_n=(\mathbf a_n):x^\infty,$ and
 $Q_n=R/\mathfrak h_n.$
Then $x$ is $Q_n$-regular and, with
$\mathcal M_C:=C^{d-1}e(S)$,
\begin{equation}
 \mu_n:=\length_R(Q_n/xQ_n)=e_S(J_n)\leq\mathcal M_C.
\label{eq:bounded-curve}
\end{equation}
Put
\[
 Q^*=\prod_{\mathcal U}Q_n,\qquad
 \Inf(Q^*)=\bigcap_{r\ge1}\m^rQ^*,\qquad
 Q_\#=Q^*/\Inf(Q^*).
\]
Then the following assertions hold.
\begin{enumerate}[label=\textup{(\roman*)}]
\item the natural map $R_\#\to Q_\#$ is surjective, so
\begin{equation}
 Q_\#=R_\#/H,\qquad
 H:=\ker(R_\#\to Q_\#);
\label{eq:curve-cataproduct-kernel}
\end{equation}
\item $x$ is $Q_\#$-regular,
\begin{equation}
 Q_\#/xQ_\#\cong\prod_{\mathcal U}(Q_n/xQ_n),
 \qquad
 \length_{R_\#}(Q_\#/xQ_\#)
 =\lim_{\mathcal U}\mu_n\leq\mathcal M_C,
\label{eq:slice-transfer}
\end{equation}
and $Q_\#$ is a one-dimensional Cohen--Macaulay with no
embedded primes;
\item the tuple $\mathbf a$ annihilates $Q_\#$, and hence
\begin{equation}
 R_\#/(\mathbf a)\twoheadrightarrow Q_\#;
\label{eq:curve-quotient-surjection}
\end{equation}
\item for every $\mathfrak q\in\Min(R_\#/L)$,
\begin{equation}
 \length_{(R_\#)_{\mathfrak q}}((Q_\#)_{\mathfrak q})
 \leq e_{(R_\#)_{\mathfrak q}}(L_{\mathfrak q}),
\label{eq:localcoeff}
\end{equation}
where the left-hand side is zero if the localization vanishes.
\end{enumerate}
\end{lemma}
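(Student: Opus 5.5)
The plan is to handle the finite-level statements first, then transfer to the cataproduct using the bounded colength of $Q_n/xQ_n$, and finally compare at the primes $\mathfrak q$ via the minimal-reduction property of $\mathbf a$.

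\emph{Finite level.} By Lemma~\resultref{lem:simultaneous-curve-section}, the ideal $(\mathbf a_n)+(x)$ is $\m$-primary. Hence
\[
H^0_\m(R/(\mathbf a_n))=\mathfrak h_n/(\mathbf a_n),
\]
so $Q_n$ is exactly the ring $Q$ of that lemma. It follows that $x$ is $Q_n$-regular and $e_{Q_n}(x)=e_S(J_n)$. Since $Q_n$ is one-dimensional Cohen--Macaulay, $\length(Q_n/xQ_n)=e_{Q_n}(x)$. The bound $e_S(J_n)\le C^{d-1}e(S)$ is \eqref{eq:rough-drop-bound}, and this gives \eqref{eq:bounded-curve}.

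\emph{Cataproduct.} For (i), surjectivity is immediate: $R^*\to Q^*$ is surjective, and it carries $\Inf(R^*)$ into $\Inf(Q^*)$, since $\m^rQ^*$ is the image of $\m^rR^*$.

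For (ii), start from the inclusion $x^N Q_n\supseteq \m^{N\mathcal M_C}Q_n$, which holds because $\length(Q_n/x^NQ_n)=N\mu_n\le N\mathcal M_C$. Consequently $\Inf(Q^*)\subseteq x^NQ^*$ for every $N$, and
\[
Q_\#/x^NQ_\#\cong\prod_{\mathcal U}Q_n/x^NQ_n.
\]
The proof of Lemma~\resultref{lem:bounded-transfer} then gives $\length(Q_\#/x^NQ_\#)=N\lim_{\mathcal U}\mu_n$. Now copy the argument at the end of Lemma~\resultref{lem:interior-block-limit}. The exact sequence $Q_\#/x^{N-1}\to Q_\#/x^N\to Q_\#/x\to 0$ has lengths adding up, so multiplication by $x$ is injective there. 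Combined with Krull's intersection theorem in the Noetherian ring $Q_\#$ (a quotient of $R_\#$), this shows that $x$ is $Q_\#$-regular. Then $Q_\#$ has dimension exactly one and is Cohen--Macaulay, hence has no embedded primes. (If $\lim_{\mathcal U}\mu_n=0$ then $Q_\#=0$, a degenerate case to note separately.)

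Part (iii) is trivial: $a_{i,n}\in\mathfrak h_n$ for every $n$.

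\emph{Part (iv): the main obstacle.} First we need $\mathfrak h_n\supseteq$ something related to $I_n$ that survives to $L$. I expect only the weak statement, via (iii): $Q_\#$ is a quotient of $R_\#/(\mathbf a)$. Localize at $\mathfrak q\in\Min(R_\#/L)$. By Lemma~\resultref{lem:compatible-coefficient-choice}(iii), $(\mathbf a)_{\mathfrak q}$ is a minimal reduction of $L_{\mathfrak q}$, generated by $d-1$ elements in the $(d-1)$-dimensional Cohen--Macaulay ring $(R_\#)_{\mathfrak q}$ (see \eqref{eq:localized-catapower-multiplicity}). Hence $\mathbf a$ is a regular sequence there, and
\[
\length\bigl((R_\#)_{\mathfrak q}/(\mathbf a)\bigr)=e((\mathbf a)_{\mathfrak q})=e(L_{\mathfrak q}).
\]
Since $(Q_\#)_{\mathfrak q}$ is a quotient of $(R_\#/(\mathbf a))_{\mathfrak q}$, we get
\[
\length\bigl((Q_\#)_{\mathfrak q}\bigr)\le e(L_{\mathfrak q}).
\]
This requires that $(\mathbf a)_{\mathfrak q}$ be $\mathfrak q$-primary, which holds because it reduces the primary ideal $L_{\mathfrak q}$; when $(Q_\#)_{\mathfrak q}=0$ there is nothing to prove. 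The delicate points are therefore already packaged in Lemma~\resultref{lem:compatible-coefficient-choice}. What remains to check carefully is only the transfer of lengths in (ii) under the bounded slice lengths.
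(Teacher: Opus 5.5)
Your proof is correct and follows the paper's argument step for step. You identify $Q_n$ with the curve quotient of Lemma~\resultref{lem:simultaneous-curve-section}, get surjectivity from $\Inf(R^*)\mapsto\Inf(Q^*)$, transfer the slices using $\m^{N\mathcal M_C}Q_n\subseteq x^NQ_n$, and prove (iv) by the same minimal-reduction length bound at $\mathfrak q$. The one local variation is the proof that $x$ is $Q_\#$-regular: you use the length count on $Q_\#/x^NQ_\#$ plus Krull's intersection theorem, as in Lemma~\resultref{lem:interior-block-limit}, whereas the paper argues directly that $xz_n\in\m^{(r+1)\mathcal M_C}Q_n\subseteq x^{r+1}Q_n$ forces $z_n\in x^rQ_n$; both are valid. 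Your degenerate case $\lim_{\mathcal U}\mu_n=0$ cannot occur, since $\mu_n=e_S(J_n)\geq1$.
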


\begin{proof}
Since $(x,\mathbf a_n)$ is $\m$-primary, the argument in
Lemma~\resultref{lem:simultaneous-curve-section} gives
\[
 H^0_\m(R/(\mathbf a_n))
 =H^0_{(x)}(R/(\mathbf a_n))
 =\mathfrak h_n/(\mathbf a_n).
\]
Thus $Q_n$ is exactly the curve quotient used in
Lemma~\resultref{lem:simultaneous-curve-section}, and
\eqref{eq:bounded-curve} follows.  Since
$Q_n/xQ_n$ has length at most
$\mathcal M_C$,
\begin{equation}
 \m^{\mathcal M_C}Q_n\subseteq xQ_n,\qquad
 \m^{r\mathcal M_C}Q_n\subseteq x^rQ_n\subseteq\m^rQ_n
 \quad(r\geq1).
\label{eq:uniform-topologies}
\end{equation}
The quotient maps induce a surjection
$R^*\twoheadrightarrow Q^*$ \cite{Schoutens}*{2.1.6}.
Finite generation of $\m^s$ also gives
\[
 \m^sQ^*=\prod_{\mathcal U}\m^sQ_n
 \qquad(s\geq1).
\]
Thus $\Inf(R^*)$ maps into $\Inf(Q^*)$, and the induced map
$R_\#\to Q_\#$ is surjective.  This proves
\eqref{eq:curve-cataproduct-kernel}.

Suppose $xz=0$ in $Q_\#$, and choose representatives $z_n\in Q_n$.
For any fixed $r$, on a set in $\mathcal U$ that may depend on
$r$,
$
 xz_n\in\m^{(r+1)\mathcal M_C}Q_n
 \subset x^{r+1}Q_n.
$
Since $x$ is $Q_n$-regular, we have
$z_n\in x^rQ_n\subset\m^rQ_n$.  This holds for every $r$, so
$[z_n]\in\Inf(Q^*)$, and hence $z=0$.
Thus $x$ is $Q_\#$-regular.

The quotient maps induce a surjection
$Q^*\to\prod_{\mathcal U}(Q_n/xQ_n)$ with kernel $xQ^*$
\cite{Schoutens}*{2.1.6}.  Since
$\Inf(Q^*)\subseteq xQ^*$ by \eqref{eq:uniform-topologies}, the
induced map from $Q_\#$ has kernel $xQ_\#$.  This proves the
isomorphism in \eqref{eq:slice-transfer}. The equality of lengths follows from
Lemma~\resultref{lem:bounded-transfer}, applied to the ideals
$(\mathfrak h_n,x)$.
Being a quotient of $R_\#$, the ring $Q_\#$ is Noetherian.
Since $Q_\#/xQ_\#$ has finite positive length,
$\dim Q_\#=\operatorname{depth}Q_\#=1$, and $ Q_\#$ is Cohen--Macaulay with no embedded primes.

Every $a_{i,n}$ annihilates $Q_n$, so the class $a_i\in R_\#$
lies in $H$.  This proves \eqref{eq:curve-quotient-surjection}.  Fix
$\mathfrak q\in\Min(R_\#/L)$ and write
$A=(R_\#)_{\mathfrak q}$.  By
Lemma~\resultref{lem:compatible-coefficient-choice}, $(\mathbf a)A$ is a minimal reduction of
$L_{\mathfrak q}$, and
\eqref{eq:localized-catapower-multiplicity} shows that $A$ is
Cohen--Macaulay of dimension $d-1$.  The ideal $L_{\mathfrak q}$ is
$\mathfrak qA$-primary, so $(\mathbf a)A$ is a parameter ideal.
Since $A$ is Cohen--Macaulay,
\[
\length_A((Q_\#)_{\mathfrak q})
\leq\length_A\bigl(A/(\mathbf a)A\bigr)
=e_A\bigl((\mathbf a)A\bigr)
 =e_A(L_{\mathfrak q}),
\]
which is \eqref{eq:localcoeff}.
\end{proof}

\begin{remark}
There is an inclusion
$(\mathbf a):x^\infty\subset H$, but equality need not hold because
the finite-level saturation exponents may be unbounded.
\end{remark}

\subsection{Noncancellation along normalization branches}
\label{sec:branch-noncancellation}

For each $n$, let $\alpha$ range over pairs
$(\mathfrak p_{\alpha,n},\mathfrak n_{\alpha,n})$, where
$\mathfrak p_{\alpha,n}\in\Min Q_n$ and
$\mathfrak n_{\alpha,n}$ is a maximal ideal of the
normalization of $Q_n/\mathfrak p_{\alpha,n}$.  Let
$v_{\alpha,n}$ be the normalized valuation of the corresponding
discrete valuation ring.  We also view this as a valuation on $R$ through
$R\to Q_n/\mathfrak p_{\alpha,n}$, with the convention
$v_{\alpha,n}(0)=+\infty$. Put
\[
 c_{\alpha,n}
 =\length_{(Q_n)_{\mathfrak p_{\alpha,n}}}
   ((Q_n)_{\mathfrak p_{\alpha,n}}),\qquad
 f_{\alpha,n}=[\kappa(\mathfrak n_{\alpha,n}):k].
\]
For such a branch, write
\[
 w_{\alpha,n}=c_{\alpha,n}f_{\alpha,n},\qquad
 u_{\alpha,n}=v_{\alpha,n}(x),\qquad
 \kappa_{\alpha,n}=w_{\alpha,n}u_{\alpha,n}.
\]
 Note that by \eqref{eq:branch-valuation-formulas},
\begin{equation}
 \sum_\alpha\kappa_{\alpha,n}
 =e_{Q_n}(x)=\length_R(Q_n/xQ_n)\leq\mathcal M_C:=C^{d-1}e(S).
\label{eq:branch-x-contributions}
\end{equation}
Thus there are at most $\mathcal M_C$ branches, and all the integers
$w_{\alpha,n}$, $u_{\alpha,n}$, and
$\kappa_{\alpha,n}$ are uniformly bounded.  

For an ideal $E\subseteq R$,
write $v_{\alpha,n}(E):=\min \{v_{\alpha,n}(e): e\in E\}$, as in Lemma~\resultref{lem:simultaneous-curve-section}.
For the pair $I_n\subset I_n'$, put
$
 \delta_{\alpha,n}
 =v_{\alpha,n}(I_n)-v_{\alpha,n}(I_n'),
$
which satisfies $0\leq\delta_{\alpha,n}\leq u_{\alpha,n}$ by (\ref{eq:branch-valuation-formulas}).

We will say that the branch $\alpha$ is \emph{active} if $\delta_{\alpha,n}>0$.
After padding the sums with zero terms and restricting to a
$\mathcal U$-large set, we may use a fixed finite index set and
assume that the integers
$w_{\alpha,n},u_{\alpha,n},\kappa_{\alpha,n},\delta_{\alpha,n}$
and the active index set are independent of $n$.

For each $f\in R_\#$, choose a representative $(f_n)_n\in \prod_{n}R$.
For an active branch $\alpha$, we define
\begin{equation}
\mathfrak q_\alpha=
\left\{f\in R_\#:
\frac{v_{\alpha,n}(f_n)}{v_{\alpha,n}(x)}
\to_{\mathcal U}\infty\right\}.
\label{eq:asymptotic-branch-prime}
\end{equation}
Notice that the defining condition is independent of the representative.  Indeed,
\eqref{eq:branch-x-contributions} gives
$u_{\alpha,n}\leq\mathcal M_C$, whereas $v_{\alpha,n}$ is centered
at the maximal ideal and therefore $v_{\alpha,n}(\m)\geq1$.  Hence
\begin{equation}
 \frac{v_{\alpha,n}(\m)}{v_{\alpha,n}(x)}
 \geq\frac1{\mathcal M_C}.
\label{eq:valuation-topology}
\end{equation}
The difference of two representatives of the same element of $R_\#$
lies in $\m^r$ for $\mathcal U$-almost all $n$, for each fixed $r$.
Its normalized valuation therefore tends to infinity by
\eqref{eq:valuation-topology}. Thus, by the ultrametric inequality, \eqref{eq:asymptotic-branch-prime} holds for one representative if
and only if it holds for the other.

By the ultrametric inequality again, $\mathfrak q_\alpha$ is closed under addition, and since the branch valuations take nonnegative values on $R$, it is closed under
multiplication by elements of $R_\#$.  Thus $\mathfrak q_\alpha$
is an ideal, and it is proper since $x\notin\mathfrak q_\alpha$.
To see that it is prime, suppose that $f,g\notin\mathfrak q_\alpha$.
Since $\mathcal U$ is an ultrafilter, their normalized valuations are
bounded on respective sets in $\mathcal U$, hence both are bounded
on the intersection.  The normalized valuation of $fg$, being their
sum, is bounded there as well.  Hence $fg\notin\mathfrak q_\alpha$,
proving primality.

The following lemma bounds the total contribution of
the active branches associated to a fixed minimal prime of $Q_\#$.
Its proof uses the bounded curve sections, but not the constant-block
decomposition.
\begin{lemma}\label{lem:consecutive-exponent-noncancellation}
Let the $Q_n$'s be as in Lemma~\resultref{lem:curve-cataproduct}, with
$\length_R(Q_n/xQ_n)\leq M$ for an integer
$M\geq1$.  For $\mathfrak q\in\Spec R_\#$, let
$\mathcal A_{\mathfrak q}$ be the set of active branches $\alpha$
for which $\mathfrak q_\alpha=\mathfrak q$, and put
\[
 W_{\mathfrak q}:=
 \lim_{\mathcal U}\sum_{\alpha\in\mathcal A_{\mathfrak q}}
 w_{\alpha,n}\delta_{\alpha,n}.
\]
If $\mathfrak q\in\Min Q_\#$, then
\begin{equation}
W_{\mathfrak q}\leq
\length_{(R_\#)_{\mathfrak q}}((Q_\#)_{\mathfrak q})
e_{R_\#}(x;R_\#/\mathfrak q).
\label{eq:limit-prime-bound}
\end{equation}
\end{lemma}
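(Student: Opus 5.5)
The plan is to discard the drops $\delta_{\alpha,n}$ at the outset and prove a statement about $x$-multiplicities. By \eqref{eq:branch-valuation-formulas} we have $\delta_{\alpha,n}\le u_{\alpha,n}$, so
\[
W_{\mathfrak q}\le\lim_{\mathcal U}\sum_{\alpha\in\mathcal A_{\mathfrak q}}\kappa_{\alpha,n}.
\]
It therefore suffices to show that the total $x$-contribution of the branches whose asymptotic prime is $\mathfrak q$ is at most the $\mathfrak q$-summand of the associativity formula
\[
e_{R_\#}(x;Q_\#)=\sum_{\mathfrak q'\in\Min Q_\#}\length\bigl((Q_\#)_{\mathfrak q'}\bigr)\,e_{R_\#}(x;R_\#/\mathfrak q').
\]
Since $Q_\#$ is one-dimensional Cohen--Macaulay (Lemma~\resultref{lem:curve-cataproduct}(ii)), this summand can be isolated by a test element. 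Choose $g\in R_\#$ lying in every $\mathfrak q'\in\Min Q_\#\setminus\{\mathfrak q\}$ but not in $\mathfrak q$. Put $\Gamma=0:_{Q_\#}g^\infty$ and $Q'=Q_\#/\Gamma$. Then $x$ is $Q'$-regular, and $\length_{R_\#}(Q'/x^NQ')=N\cdot\length((Q_\#)_{\mathfrak q})\,e_{R_\#}(x;R_\#/\mathfrak q)$ for all $N$.

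Next I would compare this with the finite levels through the branch valuations. Represent $g$ by $(g_n)$. Since $g\notin\mathfrak q=\mathfrak q_\alpha$ for $\alpha\in\mathcal A_{\mathfrak q}$, after shrinking to a $\mathcal U$-large set the normalized values $v_{\alpha,n}(g_n)/u_{\alpha,n}$ are bounded by a constant $B$. Fix $N$ and consider the maps
\[
\Phi_{N,n}\colon Q_n\to\bigoplus_{\alpha\in\mathcal A_{\mathfrak q}}\bigl(\text{$c_{\alpha,n}$-fold filtered pieces of }V_{\alpha,n}\bigr)/x^N.
\]
These should account for $c_{\alpha,n}$ by using a composition series of $(Q_n)_{\mathfrak p_{\alpha,n}}$, or more simply by mapping into $(Q_n)_{\mathfrak p}$ modulo a suitable filtration.

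I would check two facts. First, infinitesimals of $Q^*$ die in the target, by \eqref{eq:valuation-topology}, so $\prod_{\mathcal U}\Phi_{N,n}$ factors through $Q_\#/x^NQ_\#$. Second, elements of $\Gamma$ also die. If $g^sz$ is infinitesimal, then $v_{\alpha,n}(z_n)\ge v_{\alpha,n}(g^s_nz_n)-sBu_{\alpha,n}$, and this is $\ge Nu_{\alpha,n}$ on a $\mathcal U$-large set. Hence the ultraproduct map factors through $Q'/x^NQ'$, which gives
\[
\length\bigl(\operatorname{im}\prod_{\mathcal U}\Phi_{N,n}\bigr)\le N\,\length((Q_\#)_{\mathfrak q})\,e_{R_\#}(x;R_\#/\mathfrak q).
\]
Image lengths transfer by Lemma~\resultref{lem:bounded-transfer}, since all modules are killed by a fixed power of $\m$ by \eqref{eq:uniform-topologies}.

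Finally, I would bound the image length at level $n$ from below by $N\sum_{\alpha\in\mathcal A_{\mathfrak q}}\kappa_{\alpha,n}-E_n$. Dividing by $N$ and letting $N\to\infty$ then gives \eqref{eq:limit-prime-bound}, provided $E_n$ is bounded independently of $n$ and $N$, or at least $E_n=o(N)$ uniformly in $n$.

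This is the main obstacle. The natural lower bound comes from the finite colength of $Q_n/\mathfrak p$ inside its normalization, and this conductor-type defect need not be bounded in $n$. What I would try first is to avoid conductors altogether. Pick a single element $y_n$ of $Q_n$ whose $\alpha$-valuations are as small as possible; for instance an element of $\m$ that is general with respect to the branches, having value $v_{\alpha,n}(\m)\le u_{\alpha,n}\le\mathcal M_C$ on each branch. Then produce, inside the image, the submodule spanned by $y_n^ix^j$. Its length modulo $x^N$ can be counted directly from valuations, and the only loss is in the first $O(\mathcal M_C)$ steps, so the error is uniformly bounded by a function of $\mathcal M_C$. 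If that fails, the fallback is to use the bound on the number of branches and on the $w_{\alpha,n},u_{\alpha,n}$ to pass to an ultraproduct of the branch DVRs. Their cataproducts are complete DVRs in which $x$ has bounded value, so the comparison could be made directly in the limit.
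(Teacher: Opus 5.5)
Your reduction to $\lim_{\mathcal U}\sum_{\alpha\in\mathcal A_{\mathfrak q}}\kappa_{\alpha,n}$ via $\delta_{\alpha,n}\le u_{\alpha,n}$ is fine. So are the passage to $Q'=Q_\#/(0:_{Q_\#}g^\infty)$ and the factorization through $Q'/x^NQ'$. The gap is the step you flag, and it is not a technicality. Your argument uses nothing beyond the bounded curve sections, and for those the needed bound $\length(\operatorname{im}\Phi_{N,n})\ge N\sum_{\alpha\in\mathcal A_{\mathfrak q}}\kappa_{\alpha,n}-E_n$, with $E_n=o(N)$ uniformly in $n$, is false.

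Take $R=k\fps{x,y}$ and $a_n=y^2-x^{2m_n+1}$ with $m_n\to\infty$. Then $Q_n\cong k\fps{t^2,t^{2m_n+1}}$ with $x=t^2$. There is one branch, with $w=1$, $\kappa=u=2$, $\length_R(Q_n/xQ_n)=2$, and asymptotic prime $(y)$. One checks that $Q_\#=K\fps{x,y}/(y^2)$. So the right-hand side of the lemma is $2\cdot 1=2$, and the inequality you reduced to holds with equality. But for $N\le m_n$ the image of $Q_n$ in $V/x^NV=k\fps{t}/(t^{2N})$ is spanned by $1,t^2,\dots,t^{2N-2}$, so it has length $N$. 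Hence the ultralimit of image lengths is $N$, not $2N-O(1)$.

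The limiting multiplicity lives in the nilpotent structure of $Q_\#$ at $\mathfrak q$. A map through normalizations truncated at a fixed level $N$ cannot see it once conductor exponents are unbounded in $n$. Your two repairs fail on the same example:
\begin{itemize}
\item With $y_n^ix^j$, only even values occur. In general the values reached form a subsemigroup of density $1/\gcd(v_{\alpha,n}(y_n),u_{\alpha,n})$, and residue degrees or several branches defeat value counting anyway.
\item In the fallback, the induced map $Q_\#\to K\fps{t}$ kills $y$, and $K\fps{t}$ is a rank-two extension of its image $K\fps{t^2}$. So the limit DVR records $\kappa=2$ through a degree with no a priori relation to $\length((Q_\#)_{\mathfrak q})$.
\end{itemize}

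The missing idea is to measure the colength of a single principal parameter on all of $Q_n$. The one-dimensional associativity formula computes this exactly as $\sum_\alpha w_{\alpha,n}v_{\alpha,n}(g)$, with no conductor term.

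The paper takes $f\in\mathfrak q$ outside the other minimal primes of $Q_\#$, which is the opposite of your choice of $g$. It then uses $f_n+x^{r_{N,n}}$, with $r_{N,n}$ chosen among the $M+1$ consecutive integers $N,\dots,N+M$ so that no cancellation occurs on any of the at most $M$ branches. Then $\length_R(Q_n/(f_n+x^{r_{N,n}})Q_n)\le(N+M)M$ is bounded independently of $n$, so it transfers to $Q_\#$.

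In $Q_\#$, the associativity formula for $f+x^{N+t_N}$, where $t_N=\lim_{\mathcal U}(r_{N,n}-N)$, has $\mathfrak q$-term $(N+t_N)\length((Q_\#)_{\mathfrak q})\,e_{R_\#}(x;R_\#/\mathfrak q)$. The terms at the other minimal primes are eventually independent of $N$. At finite level, each branch in $\mathcal A_{\mathfrak q}$ contributes exactly $(N+t_N)u_{\alpha,n}$, because $v_{\alpha,n}(f_n)/u_{\alpha,n}\to_{\mathcal U}\infty$. Dividing by $N+t_N$ and letting $N\to\infty$ gives the lemma.
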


\begin{proof}
The set of minimal primes of $Q_\#$ is finite.  If $\mathfrak q$ is
the unique minimal prime, set $f=0$.  
Otherwise, choose
by prime avoidance
\[
f\in\mathfrak q\setminus
\bigcup_{\mathfrak p\in\Min Q_\#,\ \mathfrak p\ne\mathfrak q}\mathfrak p.
\]
Let $(f_n)_n$ be a representative for $f$.  After modifying it outside
a $\mathcal U$-large set, we may assume that $f_n\in\m$ for every $n$.

On $R_\#/\mathfrak q$, the perturbation $f+x^r$
has multiplicity $r e_{R_\#}(x;R_\#/\mathfrak q)$, whereas its
multiplicities on the other components are eventually constant as
$r\to\infty$.  At finite levels, choosing among consecutive
exponents prevents cancellation on every branch.  For each fixed
interval of exponents, the resulting parameter quotients have length
bounded independently of $n$, allowing their lengths to be
transferred to $Q_\#$.

Fix $N\geq1$ and set
$
\mathcal E_N:=\{N,N+1,\ldots,N+M\}.
$
For each $n$ and each branch valuation $v$ of $Q_n$,
$v(f_n+x^r)>\min\{v(f_n),r\,v(x)\}$ is possible only if
$v(f_n)=r\,v(x)$.  Since $v(x)>0$, each branch excludes at most
one $r\in\mathcal E_N$.  There are at most $M$ branches, so we may
choose $r_{N,n}\in\mathcal E_N$ such that
\begin{equation}
v(f_n+x^{r_{N,n}})
=\min\{v(f_n),r_{N,n}v(x)\} \quad \text{ on every branch. } \quad 
 \label{eq:min}
 \end{equation}
 Put
$
 \widetilde g_{N,n}:=f_n+x^{r_{N,n}}
$
for every $n$, and set
$t_N:=\lim_{\mathcal U}(r_{N,n}-N)\in\{0,\ldots,M\}$.
Then $(\widetilde g_{N,n})_n$ represents
$
 g_N=f+x^{N+t_N}\in R_\#.
$
By \eqref{eq:min}, $\widetilde g_{N,n}$ has finite valuation on every
branch and hence avoids every minimal prime of $Q_n$.
Since $\widetilde g_{N,n}\in\m$ and $Q_n$ is
one-dimensional Cohen--Macaulay, it is a
$Q_n$-regular parameter, and
$Q_n/\widetilde g_{N,n}Q_n$ has finite length.
For every $n$, the one-dimensional associativity formula and
\eqref{eq:min} give
\begin{equation}
\begin{aligned}
\length_R(Q_n/\widetilde g_{N,n}Q_n)
&=\sum_\alpha w_{\alpha,n}
  \min\{v_{\alpha,n}(f_n),
        r_{N,n}u_{\alpha,n}\}\\
&\leq (N+M)\sum_\alpha
  w_{\alpha,n}u_{\alpha,n}
\leq(N+M)M.
\end{aligned}
\label{eq:bounded-g}
\end{equation}
For $H_N:=(N+M)M$, we then have
$\m^{H_N}Q_n\subseteq\widetilde g_{N,n}Q_n$ for every $n$.
Since $\m^{H_N}$ is finitely generated, we have
\[
 \Inf(Q^*)\subseteq\m^{H_N}Q^*
 =\prod_{\mathcal U}\m^{H_N}Q_n
 \subseteq\prod_{\mathcal U}\widetilde g_{N,n}Q_n.
\]
The quotient maps $Q_n\to Q_n/\widetilde g_{N,n}Q_n$ induce a
surjection from $Q^*$ with kernel
$\prod_{\mathcal U}\widetilde g_{N,n}Q_n$
\cite{Schoutens}*{2.1.6}.  This kernel contains $\Inf(Q^*)$, and
its image in $Q_\#$ is $g_NQ_\#$.  Hence
\begin{equation}
 Q_\#/g_NQ_\#
 \cong\prod_{\mathcal U}Q_n/\widetilde g_{N,n}Q_n.
\label{eq:g-transfer}
\end{equation}
Lemma~\resultref{lem:bounded-transfer}, applied to the ideals
$(\mathfrak h_n,\widetilde g_{N,n})\subset R$, gives
\begin{equation}
 \length_{R_\#}(Q_\#/g_NQ_\#)
 =\lim_{\mathcal U}
 \length_R(Q_n/\widetilde g_{N,n}Q_n)<\infty.
\label{eq:g-length-transfer}
\end{equation}

Since $Q_\#$ is one-dimensional Cohen--Macaulay, $g_N$ is a
$Q_\#$-regular parameter, and
$
 \length_{R_\#}(Q_\#/g_NQ_\#)=e_{R_\#}(g_N;Q_\#),
$
so the one-dimensional associativity formula applies.

For every active branch $\alpha\in\mathcal A_{\mathfrak q}$, we have
$v_{\alpha,n}(f_n)/u_{\alpha,n}\to_{\mathcal U}\infty$ by the definition of $\mathfrak q_\alpha$.  Therefore,
for this fixed $N$, the equality
$
 \min\{v_{\alpha,n}(f_n),(N+t_N)u_{\alpha,n}\}
 =(N+t_N)u_{\alpha,n}
$
holds simultaneously for all $\alpha\in\mathcal A_{\mathfrak q}$ on
a set in $\mathcal U$, because this set of branches is finite.  Their
total contribution therefore satisfies
\begin{equation}
\lim_{\mathcal U}\length_R(Q_n/\widetilde g_{N,n}Q_n)
\geq (N+t_N)
\lim_{\mathcal U}\sum_{\alpha\in\mathcal A_{\mathfrak q}}
 w_{\alpha,n}\delta_{\alpha,n}
=(N+t_N)W_{\mathfrak q}.
\label{eq:active-branch-contribution}
\end{equation}
Here we used $\delta_{\alpha,n}\leq u_{\alpha,n}$.
Combining \eqref{eq:g-length-transfer} with the associativity formula
for $Q_\#$,
\eqref{eq:active-branch-contribution} yields
\begin{equation}
(N+t_N)W_{\mathfrak q}
\leq\sum_{\mathfrak p\in\Min Q_\#}
\length_{(R_\#)_{\mathfrak p}}((Q_\#)_{\mathfrak p})
e_{R_\#}(g_N;R_\#/\mathfrak p).
\label{eq:special-assoc}
\end{equation}
Since $f\in\mathfrak q$, the
$\mathfrak q$-term on the right-hand side of
\eqref{eq:special-assoc} is
$
(N+t_N)
\length_{(R_\#)_{\mathfrak q}}((Q_\#)_{\mathfrak q})
e_{R_\#}(x;R_\#/\mathfrak q).
$
For $\mathfrak p\ne\mathfrak q$, the image of $f$ in the
one-dimensional domain $R_\#/\mathfrak p$ is nonzero.  For every branch
valuation $v$ arising from its finite normalization,
$rv(x)>v(f)$ for all sufficiently large $r$.  Hence
$
 e_{R_\#}(f+x^r;R_\#/\mathfrak p)
 =e_{R_\#}(f;R_\#/\mathfrak p)
$
for $r\gg0$.  These other terms in
\eqref{eq:special-assoc} are therefore bounded independently of $N$.
Dividing by $N+t_N$ and letting $N\to\infty$, we
obtain \eqref{eq:limit-prime-bound}.
\end{proof}

For the interior steps considered below, every active
branch defines a prime in
$\Min(R_\#/L)\cap\Min Q_\#$.
Lemma~\resultref{lem:consecutive-exponent-noncancellation}, together
with \eqref{eq:localcoeff}, then bounds
$\lim_{\mathcal U}D_1(I_n,I_n')$ by $\omega_x(L)$, which is
estimated in Lemma~\resultref{lem:limit-multiplicity-bound}.

\begin{proposition}\label{prop:interior-mixed-drop}
In the situation of Lemma~\resultref{lem:interior-block-limit},
assume in addition that $h_n-j_n\geq2$ for every $n$.  Then
\begin{equation}
\limsup_{n\to\infty}D_1(I_n,I_n')
\leq(d-1)!\muo(R)\beta.
\label{eq:interior-mixed-drop}
\end{equation}
\end{proposition}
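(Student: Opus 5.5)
Since the ultrafilter $\mathcal U$ is arbitrary, it suffices to show $\lim_{\mathcal U}D_1(I_n,I_n')\le(d-1)!\muo(R)\beta$ for every nonprincipal $\mathcal U$. Passing to a subsequence realizing the $\limsup$, and then to any nonprincipal ultrafilter containing it, the ultralimit equals the $\limsup$. We may assume $I_n'\ne R$: the hypothesis $h_n-j_n\ge2$ gives $I_n'=K_{n,j_n+1}$ with $j_n+1<h_n\le q$, so $x\notin I_n'$. Fix the tuples $\mathbf a_n$ of Lemma~\resultref{lem:compatible-coefficient-choice} and the curve quotients $Q_n$, $Q_\#=R_\#/H$ of Lemma~\resultref{lem:curve-cataproduct}. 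By \eqref{eq:branch-valuation-formulas}, $D_1(I_n,I_n')=\sum_\alpha w_{\alpha,n}\delta_{\alpha,n}$, and the sum runs only over active branches. After fixing the finite branch index set on a $\mathcal U$-large set, we get
\[
\lim_{\mathcal U}D_1(I_n,I_n')=\sum_{\mathfrak q}W_{\mathfrak q},
\]
where the sum is grouped by the primes $\mathfrak q=\mathfrak q_\alpha$.

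The key step is to show that every active branch satisfies $\mathfrak q_\alpha\in\Min(R_\#/L)\cap\Min Q_\#$.

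First, $L\subseteq\mathfrak q_\alpha$. The interior hypothesis should give this, and I would argue it with the block form. Write $I_n=P_n+x^{r_n}B_n$ and $I_n'=P_n+x^{r_n-1}B_n$. If some generator $\ell_{\nu,n}$ of $P_n$ had bounded normalized valuation, then $v_{\alpha,n}(P_n)\le c\,u_{\alpha,n}$. Since $r_n\to\infty$, the terms $x^{r_n}B_n$ and $x^{r_n-1}B_n$ do not affect the minimum, so $v_{\alpha,n}(I_n)=v_{\alpha,n}(P_n)=v_{\alpha,n}(I_n')$. That would give $\delta_{\alpha,n}=0$ on a $\mathcal U$-large set, contradicting activeness. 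Hence every $\ell_\nu$ lies in $\mathfrak q_\alpha$, and by \eqref{eq:limit-generators} we get $L\subseteq\mathfrak q_\alpha$.

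Second, $H\subseteq\mathfrak q_\alpha$. An element of $H$ is represented by a sequence lying in $\mathfrak h_n+\m^r$ for every $r$. The branch valuation is $+\infty$ on $\mathfrak h_n$, and by \eqref{eq:valuation-topology} it grows on $\m^r$. Hence its normalized valuation tends to infinity.

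Third, $x\notin\mathfrak q_\alpha$. Then $\mathfrak q_\alpha$ contains $L$ and the ideal $H$ of the one-dimensional ring $Q_\#$, but not $x$. Since $T=R_\#/L$ and $Q_\#$ are one-dimensional with $x$ a parameter, $\mathfrak q_\alpha$ is a minimal prime of both. This is the main obstacle. The delicate part is the valuation argument for $L\subseteq\mathfrak q_\alpha$: one must control the minimum over the varying generators $g_{\mu,n}$ of $x^{r_n}B_n$. I would bound their valuations below by $r_n u_{\alpha,n}$, which gives exactly the needed growth.

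Given this, Lemma~\resultref{lem:consecutive-exponent-noncancellation} applies with $M=\mathcal M_C$. Combined with \eqref{eq:localcoeff}, it gives
\[
\lim_{\mathcal U}D_1\le\sum_{\mathfrak q\in\Min(R_\#/L)}e_{(R_\#)_{\mathfrak q}}(L_{\mathfrak q})\,e_{R_\#}(x;R_\#/\mathfrak q)=\omega_x(L).
\]
Finally, Lemma~\resultref{lem:limit-multiplicity-bound} bounds $\omega_x(L)$ by $(d-1)!\muo(R)\beta$, which yields \eqref{eq:interior-mixed-drop}.
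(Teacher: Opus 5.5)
Your proposal is correct and follows the paper's proof essentially step for step. The common steps are: the ultralimit along a subsequence realizing the $\limsup$; the branch decomposition $D_1(I_n,I_n')=\sum_\alpha w_{\alpha,n}\delta_{\alpha,n}$; the block form $I_n=P_n+x^{r_n}B_n$, which forces $L\subseteq\mathfrak q_\alpha$ for active branches; the valuation-topology bound giving $H\subseteq\mathfrak q_\alpha$; and finally Lemma~\resultref{lem:consecutive-exponent-noncancellation}, \eqref{eq:localcoeff}, and Lemma~\resultref{lem:limit-multiplicity-bound}. Your generator-by-generator contrapositive for $L\subseteq\mathfrak q_\alpha$ is only a cosmetic variant of the paper's direct estimate $v_{\alpha,n}(P_n)>(r_n-1)u_{\alpha,n}+v_{\alpha,n}(B_n)$.

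One small slip: $h_n-j_n\geq2$ yields $I_n'\neq R$, which is all that is needed, but not $x\notin I_n'$. Indeed $x\in I_n'$ can occur when $h_n-j_n=2$ and the block ends at the terminal index of its colon filtration.
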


\begin{proof}
The quantities $D_1(I_n,I_n')$ are bounded by
\eqref{eq:rough-drop-bound}.  We pass to a subsequence along which they
converge to their limsup, and reindex the subsequence by $\mathbb N$.
Fix a nonprincipal ultrafilter $\mathcal U$ on $\mathbb N$.  The
hypothesis $h_n-j_n\geq2$ implies that
$I_n'=K_{n,j_n+1}\ne R$ for every $n$.  Put
$
 L:=\catlim I_n.
$
Choose $\mathbf a_n$ as in
Lemma~\resultref{lem:compatible-coefficient-choice}, and form
$Q_n$, $Q_\#$, and
$H=\ker(R_\#\to Q_\#)$ as in
Lemma~\resultref{lem:curve-cataproduct}.  By Lemma~\resultref{lem:simultaneous-curve-section},
\[
 D_1(I_n,I_n')
 =\sum_\alpha w_{\alpha,n}\delta_{\alpha,n}.
\]
As above, pad the branch sums with zero terms and restrict to a
$\mathcal U$-large set on which the index set, the active set, and the integers
$w_{\alpha,n},u_{\alpha,n},\kappa_{\alpha,n},\delta_{\alpha,n}$ are
constant.

Let $\alpha$ be an active branch.  With the notation
\eqref{eq:block-tail-form}, put $r_n=h_n-j_n$; then
$
 I_n=P_n+x^{r_n}B_n,\,
 I_n'=P_n+x^{r_n-1}B_n,\,
 r_n\to_{\mathcal U}\infty.
$
Write
$
 p_n=v_{\alpha,n}(P_n),\,
 b_n=v_{\alpha,n}(B_n),\,
 u_n=v_{\alpha,n}(x).
$
If $p_n\leq(r_n-1)u_n+b_n$, then $v_{\alpha,n}(I_n)=v_{\alpha,n}(I_n')=p_n$, contrary to the assumption that $\alpha$ is active.
Hence
\[
 p_n>(r_n-1)u_n+b_n,\qquad
 \frac{p_n}{u_n}\geq r_n-1
\]
on a set in $\mathcal U$.  Since $r_n\to_{\mathcal U}\infty$, we
have $p_n/u_n\to_{\mathcal U}\infty$.  Since
$L=\catlim P_n$, the definition of $\mathfrak q_\alpha$ therefore implies that
$L\subseteq\mathfrak q_\alpha$.  Since
$x\notin\mathfrak q_\alpha$ and $R_\#/L$ is one-dimensional local,
$\mathfrak q_\alpha/L \subset R_\#/L$ is not a maximal ideal and hence minimal.  

We also have $H\subseteq\mathfrak q_\alpha$.  Indeed, if $(f_n)_n$ represents $f\in H$, then for each fixed $s\geq1$ the
image of $f_n$ in $Q_n$ belongs to $\m^sQ_n$ for
$\mathcal U$-almost all $n$.
It follows from \eqref{eq:valuation-topology} that
\[
 \frac{v_{\alpha,n}(f_n)}{v_{\alpha,n}(x)}
 \geq\frac{s}{\mathcal M_C}
\]
on such a set.  Since $s$ is arbitrary, $f\in\mathfrak q_\alpha$ and so $H\subseteq\mathfrak q_\alpha$.
It follows that $\mathfrak q_\alpha\in\Supp Q_\#$.  The ring
$Q_\#$ is one-dimensional and Cohen--Macaulay, and
$\dim R_\#/\mathfrak q_\alpha=1$.  Thus
$\mathfrak q_\alpha\in\Min Q_\#$, and
Lemma~\resultref{lem:consecutive-exponent-noncancellation} applies.

For each $\mathfrak q\in\Min(R_\#/L)$, we set
\[
 W_{\mathfrak q}
 :=\lim_{\mathcal U}
 \sum_{\substack{\alpha\text{ active}\\
                   \mathfrak q_\alpha=\mathfrak q}}
 w_{\alpha,n}\delta_{\alpha,n}.
\]
Since the indexes of the branches are finite, fixed, we have
\[
\begin{aligned}
\lim_{\mathcal U}D_1(I_n,I_n')
&=\sum_{\mathfrak q\in\Min(R_\#/L)}W_{\mathfrak q}\\
&\leq\sum_{\mathfrak q\in\Min(R_\#/L)}
 \length_{(R_\#)_{\mathfrak q}}((Q_\#)_{\mathfrak q})
 e_{R_\#}(x;R_\#/\mathfrak q)\\
&\leq\sum_{\mathfrak q\in\Min(R_\#/L)}
 e_{(R_\#)_{\mathfrak q}}(L_{\mathfrak q})
 e_{R_\#}(x;R_\#/\mathfrak q)\\
&=\omega_x(L)\leq(d-1)!\muo(R)\beta.
\end{aligned}
\]
The first inequality follows from
Lemma~\resultref{lem:consecutive-exponent-noncancellation}, since every
prime with a positive branch contribution lies in $\Min Q_\#$; the second
from \eqref{eq:localcoeff}, and the last from
Lemma~\resultref{lem:limit-multiplicity-bound}.  By the choice of
subsequence, this ultralimit equals the original limsup, proving the proposition.
\end{proof}

\section{Uniform estimates for constant blocks}
\label{sec:endpoint}

Using Proposition~\resultref{prop:interior-mixed-drop}, we obtain
uniform estimates for constant blocks and prove
Theorem~\resultref{thm:recurrence}.

Recall the notation $F_R(I)=e_R(I)/d!$,
$F_S(J)=e_S(J)/(d-1)!$, $F_R(R)=0$, and
$A_C=C^{d-1}e(S)/(d-1)!$ from Section~\ref{sec:blocks}.

\begin{proposition}\label{prop:constant-block-estimate}
For every $C\in\mathbb N_{\geq1}$ and every $\delta>0$, there is an
integer $H=H(R,x,C,\delta)\geq1$ with the following property.  If
$[a,a+h)$ is a constant block with common colength
$1\leq\beta\leq C$, then every step indexed by
$
 a+H\leq j\leq a+h-H-1
$
satisfies
\begin{equation}
D_1(K_j,K_{j+1})\le(d-1)!(\muo(R)+\delta)\beta.
\label{eq:interior}
\end{equation}
Every such constant block $[a,a+h)$, including blocks of length less
than $2H$ and blocks containing the terminal step,
satisfies
\begin{equation}
F_R(K_a)-F_R(K_{a+h})
\le(\muo(R)+\delta)h\beta+2HA_C.
\label{eq:block-sum-bound}
\end{equation}
\end{proposition}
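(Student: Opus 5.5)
The plan is to prove \eqref{eq:interior} by contradiction from Proposition~\resultref{prop:interior-mixed-drop}, and then obtain \eqref{eq:block-sum-bound} by summing over the block with \eqref{eq:mixedinterp} and \eqref{eq:rough-drop-bound}.

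\emph{Interior estimate.} Fix $C$ and $\delta$, and suppose no integer $H$ works. Then for every $n$ (take $H=n$) there is a constant block $[a_n,a_n+h_n)$, coming from some $\m$-primary ideal, with common colength $\beta_n\in[1,C]$, and a step $j$ with $a_n+n\leq j\leq a_n+h_n-n-1$, such that
\[
D_1(K_{n,j},K_{n,j+1})>(d-1)!(\muo(R)+\delta)\beta_n .
\]
Since $\beta_n$ takes finitely many values, we pass to a subsequence on which $\beta_n=\beta$ is constant. We reindex each block from $0$ and set $j_n=j-a_n$. Then $j_n\geq n\to\infty$ and $h_n-j_n\geq n+1$. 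In particular $h_n-j_n\to\infty$ and $h_n-j_n\geq2$ for $n\geq1$. The transitivity identities show that the reindexed block is again a constant block in the colon filtration of $K_{n,a_n}$, so Lemma~\resultref{lem:interior-block-limit} applies. Proposition~\resultref{prop:interior-mixed-drop} then gives $\limsup D_1\leq(d-1)!\muo(R)\beta$, which contradicts the strict lower bound $(d-1)!(\muo(R)+\delta)\beta$. The only care needed is that $H$ depends only on $(R,x,C,\delta)$; this holds because the contradiction sequence is allowed to range over all ideals $I$.

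\emph{Block sum.} For each step $j$ in the block with $K_{j+1}\ne R$, \eqref{eq:mixedinterp} gives $e_R(K_j)-e_R(K_{j+1})\leq dD_1(K_j,K_{j+1})$, hence
\[
F_R(K_j)-F_R(K_{j+1})\leq D_1(K_j,K_{j+1})/(d-1)!.
\]
At interior steps this is at most $(\muo(R)+\delta)\beta$ by \eqref{eq:interior}.

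At the remaining steps, of which there are at most $2H$ (namely $j<a+H$ or $j>a+h-H-1$), we use $F_R(K_j)-F_R(K_{j+1})\leq A_C$ from \eqref{eq:rough-drop-bound}. The terminal step, where $K_{j+1}=R$ (that is, $x\in K_j$), falls among these. There \eqref{eq:terminal} gives $F_R(K_j)\leq \frac1d F_S(J)\leq \frac1d C^{d-1}e(S)/(d-1)!\leq A_C$, using $(\m S)^C\subseteq J$. Such a step can only be the last step of the block, so it is counted among the $H$ final steps.

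Summing telescopically over $j=a,\dots,a+h-1$ gives
\[
F_R(K_a)-F_R(K_{a+h})\leq(\muo(R)+\delta)\beta\cdot\#\{\text{interior}\}+2HA_C\leq(\muo(R)+\delta)h\beta+2HA_C.
\]
If $h<2H$ there are no interior steps and the bound $hA_C\leq 2HA_C$ suffices.

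\emph{Main obstacle.} All the substance lies in the interior estimate, and that is supplied by Proposition~\resultref{prop:interior-mixed-drop}. The remaining bookkeeping is routine: checking that the reindexed blocks satisfy the hypotheses of Lemma~\resultref{lem:interior-block-limit}, including $I_n'\neq R$, and handling the terminal step.
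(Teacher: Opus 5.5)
Your proposal is correct and follows the paper's proof essentially step for step. You argue by contradiction with blocks whose offending step moves arbitrarily far from both ends, pass to a subsequence with constant $\beta$, and invoke Proposition~\resultref{prop:interior-mixed-drop}; you then telescope using \eqref{eq:mixedinterp}, \eqref{eq:rough-drop-bound}, and \eqref{eq:terminal} on the at most $2H$ boundary steps. The only cosmetic difference is that you get the contradiction directly from the strict inequality and $\delta>0$, whereas the paper uses the integrality of $D_1$ to get a uniform gap.
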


\begin{proof}
Suppose that no $H\geq1$ satisfies \eqref{eq:interior}.  For each
$n$, we could then choose an $\m$-primary ideal $I^{(n)}$, its colon
filtration $K_t^{(n)}=I^{(n)}:x^t$, a constant block
$[a_n,a_n+h_n)$ with common colength
$1\leq\beta_n\leq C$, and an index $i_n$ at which
\eqref{eq:interior} fails, with
\[
 i_n-a_n\geq n,
 \qquad a_n+h_n-1-i_n\geq n.
\]
Reindex the block by
\[
 \widetilde K_{n,t}:=K_{a_n+t}^{(n)}\quad(0\leq t\leq h_n),
 \qquad j_n:=i_n-a_n.
\]
Thus $[0,h_n)$ is a constant block for the reindexed filtration,
$j_n\to\infty$, and $h_n-j_n\to\infty$; after deleting finitely
many terms, $h_n-j_n\geq2$.  We also pass to a subsequence on which
$\beta_n=\beta$ is constant.  Since the integers
$D_1(\widetilde K_{n,j_n},\widetilde K_{n,j_n+1})$ violate
\eqref{eq:interior}, each is at least
$
 \bigl\lfloor(d-1)!(\muo(R)+\delta)\beta\bigr\rfloor+1.
$
But this contradicts Proposition~\resultref{prop:interior-mixed-drop} that
\[
\limsup_n D_1(\widetilde K_{n,j_n},\widetilde K_{n,j_n+1})
\le(d-1)!\muo(R)\beta.
\]
  Thus \eqref{eq:interior} holds for some $H\geq1$.

For the block $[a,a+h)$, write $J=(K_a+(x))/(x)$, and let $q$
be the terminal index of the colon filtration.  Put
$
 \Delta_j=d!\bigl(F_R(K_j)-F_R(K_{j+1})\bigr).
$
Thus $\Delta_j=e_R(K_j)-e_R(K_{j+1})$ at a nonterminal step, whereas
$\Delta_{q-1}=e_R(K_{q-1})$ at the terminal step
$K_{q-1}\subset K_q=R$.
At an interior step, \eqref{eq:mixedinterp} and
\eqref{eq:interior} give
\[
\frac{\Delta_j}{d!}
\le\frac{D_1(K_j,K_{j+1})}{(d-1)!}
\le(\muo(R)+\delta)\beta.
\]
For the remaining nonterminal steps, it folows from \eqref{eq:mixedinterp} and
\eqref{eq:rough-drop-bound} that $\Delta_j/d!\le A_C$.
If the block contains the terminal step, then \eqref{eq:terminal} implies
\[
\frac{e_R(K_{q-1})}{d!}
\le\frac{e_S(J)}{d!}\le A_C.
\]
At most $2H$ steps lie outside the interior range, even when $h<2H$.
Summing these bounds proves
\eqref{eq:block-sum-bound}.
\end{proof}

Summing \eqref{eq:block-sum-bound} over the constant blocks gives the
following estimate.

\begin{proposition}\label{prop:bounded-slice-estimate}
For an $\m$-primary ideal $I \subset R$, put
\[
\begin{gathered}
q=\min\{n\in\mathbb Z_{\geq0}:x^n\in I\},\qquad
K_j=I:x^j,\qquad J_j=(K_j+(x))/(x),\\
\beta_j=\length_S(S/J_j)\quad(0\leq j<q),\qquad
\beta_q=0.
\end{gathered}
\]
For every $\eta>0$ and $C\in\mathbb N_{\geq1}$, let
$H=H(R,x,C,\eta/2)$ be as in \Cref{prop:constant-block-estimate}.
Then every $\m$-primary ideal $I$ satisfying
$\beta_0\leq C$ and $\length_R(R/I)>\frac{4CHA_C}{\eta}$
also satisfies
\[
 F_R(I)\leq(\muo(R)+\eta)\length_R(R/I).
\]
\end{proposition}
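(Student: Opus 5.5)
The plan is to decompose the colon filtration $K_0=I\subseteq K_1\subseteq\cdots\subseteq K_q=R$ into maximal constant blocks and sum the estimate \eqref{eq:block-sum-bound} of Proposition~\resultref{prop:constant-block-estimate} with $\delta=\eta/2$. The sequence $\beta_j=\length_S(S/J_j)$ is nonincreasing in $j$, because $J_j\subseteq J_{j+1}$. It takes values in $\{1,\ldots,\beta_0\}\subseteq\{1,\ldots,C\}$ for $0\le j<q$; note $x^j\notin I$ for $j<q$, so $J_j$ is proper. Hence $[0,q)$ is the disjoint union of at most $C$ maximal constant blocks $[a_i,a_i+h_i)$, each with common colength $1\le\beta^{(i)}\le C$. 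The last of these contains the terminal step $K_{q-1}\subset K_q=R$.

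By telescoping $F_R(I)=F_R(K_0)-F_R(K_q)$ (recall $F_R(R)=0$) over the blocks and applying \eqref{eq:block-sum-bound} to each, I obtain
\[
F_R(I)\le\sum_i\Bigl((\muo(R)+\tfrac{\eta}{2})h_i\beta^{(i)}+2HA_C\Bigr).
\]
By \eqref{eq:block-lengths}, summing $h_i\beta^{(i)}$ over the blocks gives $\sum_{j<q}\beta_j=\length_R(R/K_0)-\length_R(R/K_q)=\length_R(R/I)$. This also follows from the exact sequences $0\to R/K_{j+1}\to R/K_j\to S/J_j\to0$. Since there are at most $C$ blocks, this yields
\[
F_R(I)\le(\muo(R)+\tfrac{\eta}{2})\length_R(R/I)+2CHA_C.
\]

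Finally, the hypothesis $\length_R(R/I)>4CHA_C/\eta$ gives $2CHA_C<\frac{\eta}{2}\length_R(R/I)$, and the claimed bound follows. There is no real obstacle here. The only points to check are that Proposition~\resultref{prop:constant-block-estimate} indeed covers short blocks and the block containing the terminal step (it is stated to), and that the number of blocks is bounded by $C$ via the monotonicity of $\beta_j$. The constant $H$ is uniform in the block, so the error term $2HA_C$ per block does not depend on $I$.
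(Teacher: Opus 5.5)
Your proposal is correct and follows essentially the same route as the paper's proof. You split $[0,q)$ into at most $C$ maximal constant blocks, using that $\beta_j$ is nonincreasing with $\beta_0\le C$, and telescope \eqref{eq:block-sum-bound} with $\delta=\eta/2$. You then identify $\sum h\beta$ with $\length_R(R/I)$ via the colon exact sequences and absorb the error $2CHA_C$ using the colength hypothesis.
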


\begin{proof}
Note that if $\beta_j=\beta_{j+1}$ then $J_j=J_{j+1}$.
Thus the maximal intervals of constant colength are precisely the constant blocks.  There are at most $C$ such blocks, since
$\beta_0\leq C$ and $(\beta_j)_{j<q}$ is a nonincreasing sequence
of positive integers.  By the additivity of length for the colon exact sequences, we have
\[
\length_R(R/I)
=\sum_{j=0}^{q-1}\length_S(S/J_j)
=\sum_{\text{blocks}}h\beta.
\]
Telescoping \eqref{eq:block-sum-bound} over the blocks gives
\[
F_R(I)\le(\muo(R)+\eta/2)\length_R(R/I)+2CHA_C.
\]
By assumption,
$2CHA_C<(\eta/2)\length_R(R/I)$, proving the assertion.
\end{proof}

We can now finish the proof of \Cref{thm:recurrence}.
\begin{proof}[Proof of Theorem~\resultref{thm:recurrence}]
Put
$
 \Gamma=\max\{\cinf(S),\muo(R)\},
$
and fix $\Lambda>\Gamma$.  Choose
$
 \cinf(S)<\alpha<\Lambda,\,
 \muo(R)<\gamma<\Lambda.
$
By the definition of $\cinf(S)$, there is an integer $C\geq1$
such that
$
 F_S(J)\leq\alpha\length_S(S/J)
$
whenever $J$ is $\m S$-primary and
$\length_S(S/J)>C$.  Define $\eta=\gamma-\muo(R)$, and set
\[
 H:=H(R,x,C,\eta/2),\qquad
 M:=\left\lceil\frac{4CHA_C}{\eta}\right\rceil.
\]
Since $s_x(K)=\length_S(S/((K+(x))/(x)))$, we deduce from
Proposition~\resultref{prop:bounded-slice-estimate} the following implication
\[
 s_x(K)\leq C,\quad\length_R(R/K)>M
 \quad\Longrightarrow\quad
 F_R(K)\leq\gamma\length_R(R/K).
\]
The two hypotheses of
Proposition~\resultref{prop:colon-filtration-reduction} are therefore
satisfied with the constants $\alpha$ and $\gamma$.  Thus
$
 F_R(I)\leq\Lambda\length_R(R/I)
$
for every $\m$-primary ideal $I$ of sufficiently large colength.
Since $\Lambda>\Gamma$ was arbitrary,
\[
 \cinf(R)\leq\max\{\cinf(S),\muo(R)\}.\qedhere
\]
\end{proof}

\begin{variant}[of Theorem~\resultref{thm:recurrence}]
\label{var:LM-recurrence}
Let $R,x,S$ be as in Theorem~\resultref{thm:recurrence}, and let
$b\geq1$.  Suppose that $\cLM(B)\leq b$ for every regular local
homomorphism $R_P\to B$, where $x\notin P$ and $B$ is an
excellent local ring of dimension $d-1$.  Then
\[
 \cinf(R)\leq\max\{\cinf(S),b\}.
\]
\end{variant}

\begin{proof}
Use the notation $L,\beta,\omega_x(L)$ of
Lemma~\resultref{lem:limit-multiplicity-bound}.  For
$\mathfrak q\in\Min(R_\#/L)$, put $B=(R_\#)_{\mathfrak q}$ and
$P=\mathfrak q\cap R$.  Then $B$ is excellent
of dimension $d-1$, $R_P\to B$ is regular, and $x\notin P$.
The hypothesis therefore gives
\[
 e_B(L_{\mathfrak q})
 \leq(d-1)!b\length_B(B/L_{\mathfrak q}).
\]
By the associativity formula, we have
$\omega_x(L)\leq(d-1)!b\beta$.
The proofs of Propositions~\resultref{prop:interior-mixed-drop},
\resultref{prop:constant-block-estimate}, and
\resultref{prop:bounded-slice-estimate} use $\muo(R)$ only through
this bound.  Replacing $\muo(R)$ by $b$ in those proofs and in the proof of
Theorem~\resultref{thm:recurrence} proves the assertion.
\end{proof}

\section{Applications}
\label{sec:consequences}

We first prove the asymptotic Lech conjecture in arbitrary
characteristic using Theorem~\resultref{thm:recurrence}.

\begin{proof}[Proof of Theorem~\resultref{thm:main}]
We argue by induction on the dimension.  The base case $d=1$ follows
from \cite{HMQS}*{Proposition~4.10}.

Assume $d\geq2$ and that the theorem is known in dimension $d-1$.
Let $(A,\n)$ be a $d$-dimensional local ring whose completion has
isolated singularities.   Put
\[
 B:=\widehat A/
 \bigcap_{\substack{P\in\Min\widehat A\\ \dim\widehat A/P=d}}P,
\qquad
 R:=B^\natural,
\]
and denote the maximal ideal of $R$ by $\m$.  By
Lemmas~\resultref{lem:top-reduction} and~\resultref{lem:infinite-extension},
$R$ is complete, reduced, and equidimensional of dimension $d$, has
isolated singularities, and has infinite residue field.

If the residue characteristic is $p>0$ and $p$ is nilpotent in
$A$, then $pB=0$, since $B$ is reduced.  In this case $R$
has characteristic $p$.

Take $E=\m$ and $\mathcal Q=\varnothing$ in \eqref{eq:FT}, and set
$
 S=R/xR.
$
As explained after \eqref{eq:FT}, $x$ is $R$-regular and $S$ is
complete of dimension $d-1$ with isolated singularities.
By the induction hypothesis, $\cinf(S)=1$.  Since $R$ has
isolated singularities, $\muo(R)=1$, and
Theorem~\resultref{thm:recurrence} gives
\[
 \cinf(R)\leq\max\{\cinf(S),\muo(R)\}=1.
\]
Thus $\cinf(R)=1$ by \eqref{eq:universal-tail-bounds}.
Fix $\eps>0$, and choose a threshold $N_R$ for the corresponding
estimate over $R$.

For every $\n B$-primary ideal $J$, extension to $R=B^\natural$
preserves its colength and multiplicity.  Hence the same threshold is
valid for $B$.  Lemma~\resultref{lem:tail-transfer} gives a threshold
for $\widehat A$, and completion transfers the estimate to $A$.
This completes the induction.
\end{proof}

\begin{proof}[Proof of Theorem~\resultref{thm:flag-bound}]
If $R$ has isolated singularities, the empty flag is
admissible.  Otherwise, put $s=\dim\Sing R>0$.  We apply
Lemma~\resultref{lem:bertini-flag-step} successively until the quotient
has isolated singularities.  At each step the ring remains complete,
reduced, equidimensional, and Cohen--Macaulay, while the dimension of
its singular locus decreases by at least one.  Hence at most $s$
steps are needed.  The terminal ring has positive
dimension (since a reduced one-dimensional local ring has
isolated singularities), so the sequence is an admissible Bertini flag.

Fix any admissible flag $\mathcal F$.  By
Theorem~\resultref{thm:main}, its terminal ring satisfies
$\cinf(R_t)=1$.  For $0\leq i<t$, the ring $R_i$ has dimension
at least two and is Cohen--Macaulay, so
Theorem~\resultref{thm:recurrence} applies to
$R_i\to R_{i+1}$.  Thus
\[
 \cinf(R_i)\leq
 \max\{\muo(R_i),\cinf(R_{i+1})\}.
\]
Iteration gives
\[
 \cinf(R)\leq
 \max\{1,\muo(R_0),\ldots,\muo(R_{t-1})\}
 =\Lambda(\mathcal F). \qedhere
\]
\end{proof}

Since admissible flags exist and every $\Lambda(\mathcal F)$ is a
positive integer, the minimum
\[
 \Lambda_{\mathrm{Bert}}(R)
 :=\min_{\mathcal F\ \mathrm{admissible}}\Lambda(\mathcal F)
\]
is well-defined.  By Lemma~\resultref{lem:bertini-flag-step}, the flag constructed above
may be chosen so that $e(R_i)=e(R)$ for every $i$.
By Lemma~\resultref{lem:punctured-multiplicity}, 
$\Lambda(\mathcal F)\leq e(R)$.  Thus
\[
 \cinf(R)\leq\Lambda_{\mathrm{Bert}}(R)\leq e(R).
\]
The strict inequality $\cLM(R)>\cinf(R)$ ensures attainment.

\begin{proposition}\label{prop:attainment-above-tail}
Let $(A,\n)$ be a positive-dimensional Noetherian local ring.  If
$\cLM(A)>\cinf(A)$,
then $\cLM(A)$ is attained at an $\n$-primary ideal.  In particular,
$\cLM(A)\in\mathbb Q$.
\end{proposition}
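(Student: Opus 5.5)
The plan is a finiteness argument. The ratios with colength above a threshold are bounded strictly below $\cLM(A)$, and only finitely many ratio values can occur at bounded colength.

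Set $c=\cLM(A)$ and choose $\lambda$ with $\cinf(A)<\lambda<c$. By the definition of $\cinf(A)$ as a limit of nonincreasing suprema, there is an integer $N$ such that every $\n$-primary ideal $I$ with $\length_A(A/I)>N$ satisfies
\[
 \frac{e_A(I)}{d!\length_A(A/I)}\leq\lambda<c .
\]
Hence the supremum defining $\cLM(A)$ equals the supremum of these ratios over $\n$-primary ideals $I$ with $\length_A(A/I)\leq N$. This uses $\lambda<c$: the large-colength ideals contribute at most $\lambda$, so they cannot carry the supremum.

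Next I show that this restricted supremum is a maximum over a finite set of values. Every $\n$-primary $I$ with $\length_A(A/I)\leq N$ contains $\n^N$, since a module of length at most $N$ is killed by $\n^N$. Consequently $\n^{Ns}\subseteq I^s$ and
\[
 e_A(I)\leq e_A(\n^N)=N^d e(A).
\]
Thus both $e_A(I)$ and $\length_A(A/I)$ are positive integers bounded by constants depending only on $A$ and $N$. The ratio $e_A(I)/(d!\length_A(A/I))$ therefore takes only finitely many values. A nonempty finite set of reals attains its maximum; nonemptiness holds because $\n$ itself has colength $1\leq N$ (we may take $N\geq1$).

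Choose $I_0$ realizing the maximum. Then $c=e_A(I_0)/(d!\length_A(A/I_0))$, which is rational because $e_A(I_0)$ and $\length_A(A/I_0)$ are integers.

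The only delicate point is the choice of the threshold $N$. The definition of $\cinf$ gives a limit of suprema, and we need a strict gap below $c$ at some finite stage; choosing $\lambda$ strictly between $\cinf(A)$ and $c$ provides exactly that. I expect no further obstacle, since no use of completeness or of the earlier theorems is needed here.
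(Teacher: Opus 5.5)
Your proof is correct and is essentially the paper's argument: choose $\lambda$ strictly between $\cinf(A)$ and $\cLM(A)$ to get a colength threshold $N$, then note that only finitely many ratios occur at colength at most $N$, so the supremum is a maximum and hence rational. The only difference is how the multiplicity is bounded in the finite range. The paper uses Lech's inequality to get $e_A(I)\leq d!\,e(A)N$, while you use $\n^N\subseteq I$ to get $e_A(I)\leq N^d e(A)$; either bound suffices.
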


\begin{proof}
Put $d=\dim A$, and choose $\cinf(A)<\lambda<\cLM(A)$.
By the definition of $\cinf(A)$, there is an integer $N\geq1$
such that every $\n$-primary ideal $I$ of colength greater than
$N$ satisfies $e_A(I)/(d!\length_A(A/I))\leq\lambda$.
For $\n$-primary ideals $I$
of colength at most $N$, Lech's inequality gives
\[
 1\leq\length_A(A/I)\leq N,\qquad
 1\leq e_A(I)\leq d!e(A)N,
\]
so only finitely many ratios occur in this range.  Since
$\lambda<\cLM(A)$, one of them exceeds $\lambda$, and their
maximum must equal $\cLM(A)$.  This proves both assertions.
\end{proof}

\begin{corollary}\label{cor:LM-attainment}
Let $(R,\m)$ be a positive-dimensional Noetherian local ring such
that $\widehat R$ has isolated singularities.  If
$\cLM(R)>1$, then $\cLM(R)$ is attained and rational.  Thus
\cite{MaSmirnov}*{Conjecture~3.3.1} holds in arbitrary characteristic
when $\widehat R$ has isolated singularities.
\end{corollary}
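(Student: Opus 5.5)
The corollary follows by combining Theorem~\resultref{thm:main} with Proposition~\resultref{prop:attainment-above-tail}; no new estimate is needed. The plan is to check that the hypotheses of the proposition hold for $A=R$.

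\textbf{Step 1.} Since $\widehat R$ has isolated singularities and $\dim R=d\geq1$, Theorem~\resultref{thm:main} gives $\cinf(R)=1$. The theorem is stated for an arbitrary Noetherian local ring whose completion has isolated singularities, so it applies to $R$ itself. No completion, reduction to the top-dimensional components, or residue field extension is needed at this point, since these reductions were already carried out inside the proof of Theorem~\resultref{thm:main}.

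\textbf{Step 2.} The hypothesis $\cLM(R)>1$ then reads $\cLM(R)>\cinf(R)$. This is exactly the hypothesis of Proposition~\resultref{prop:attainment-above-tail}. That proposition produces an $\m$-primary ideal $I_0$ with $\cLM(R)=e_R(I_0)/(d!\length_R(R/I_0))$, and this value lies in $\mathbb Q$ because both multiplicity and colength are positive integers.

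\textbf{Step 3.} To connect with \cite{MaSmirnov}*{Conjecture~3.3.1}, note that the conjecture asserts attainment of $\cLM(R)$ whenever $\widehat R$ has isolated singularities and $\cLM(R)>1$. This is precisely what Steps 1 and 2 give, and no characteristic assumption entered anywhere. The same argument gives part~(i) of Theorem~\resultref{cor:LM-introduction}. For part~(ii), one uses Theorem~\resultref{thm:flag-bound} instead of Theorem~\resultref{thm:main} to get $\cinf(R)\le\Lambda(\mathcal F)<\cLM(R)$.

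There is no real obstacle here. The only point to watch is that Proposition~\resultref{prop:attainment-above-tail} needs the strict inequality, which is why the hypothesis $\cLM(R)>1$ is imposed. All the substantive work sits in Theorem~\resultref{thm:main}, and through it in Theorem~\resultref{thm:recurrence}.
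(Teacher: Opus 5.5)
Your proposal is correct and is exactly the paper's argument. Theorem~\resultref{thm:main} gives $\cinf(R)=1$, so the hypothesis $\cLM(R)>1$ supplies the strict gap $\cLM(R)>\cinf(R)$ that Proposition~\resultref{prop:attainment-above-tail} needs for attainment and rationality.
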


\begin{proof}
By Theorem~\resultref{thm:main}, $\cinf(R)=1$.  Apply
Proposition~\resultref{prop:attainment-above-tail}.
\end{proof}

\begin{corollary}\label{cor:flag-attainment}
Under the hypotheses of Theorem~\resultref{thm:flag-bound}, fix an
admissible flag $\mathcal F$.  If
$\cLM(R)>\Lambda(\mathcal F)$, then $\cLM(R)$ is attained and
rational.
\end{corollary}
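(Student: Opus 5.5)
The plan is to combine Theorem~\resultref{thm:flag-bound} with Proposition~\resultref{prop:attainment-above-tail}; no new estimates are needed. The hypotheses of Theorem~\resultref{thm:flag-bound} are exactly those of Corollary~\resultref{cor:flag-attainment}: $R$ is complete, reduced, equidimensional and Cohen--Macaulay, of positive dimension, with infinite residue field. Hence for the fixed admissible flag $\mathcal F$ we get
\[
 \cinf(R)\leq\Lambda(\mathcal F).
\]

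With the assumption $\cLM(R)>\Lambda(\mathcal F)$ this gives the strict inequality $\cLM(R)>\cinf(R)$. Proposition~\resultref{prop:attainment-above-tail}, applied with $A=R$ (a positive-dimensional Noetherian local ring), then produces an $\m$-primary ideal $I_0$ with
\[
 \cLM(R)=\frac{e_R(I_0)}{d!\,\length_R(R/I_0)}.
\]
This value is rational, since $e_R(I_0)$ and $\length_R(R/I_0)$ are positive integers.

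There is no real obstacle here, because all the substantive work is in Theorem~\resultref{thm:recurrence}, which feeds into Theorem~\resultref{thm:flag-bound}. The only points to check are that the flag bound is stated for every admissible flag, not just for one constructed flag, and that the finiteness argument in Proposition~\resultref{prop:attainment-above-tail} needs nothing beyond Lech's inequality. Both hold as stated. The same reasoning, with Theorem~\resultref{thm:main} in place of Theorem~\resultref{thm:flag-bound}, gives case~(i) of Theorem~\resultref{cor:LM-introduction}; the present corollary gives case~(ii).
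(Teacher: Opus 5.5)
Your proof is correct and is exactly the paper's argument: Theorem~\resultref{thm:flag-bound} gives $\cinf(R)\leq\Lambda(\mathcal F)<\cLM(R)$, and Proposition~\resultref{prop:attainment-above-tail} then yields attainment and rationality.
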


\begin{proof}
By Theorem~\resultref{thm:flag-bound},
$\cinf(R)\leq\Lambda(\mathcal F)$.  It remains to apply
Proposition~\resultref{prop:attainment-above-tail}.
\end{proof}

The following criterion uses powers of the maximal ideal.

\begin{proposition}\label{prop:numerical-attainment}
Let $(R,\m)$ be a complete reduced equidimensional Cohen--Macaulay
local ring of dimension $d\geq2$, with infinite residue field, and
suppose that $\dim\Sing R\leq1$.  If
\[
 s^d e(R)>d!\muo(R)\length_R(R/\m^s)
\]
for some integer $s\geq1$, then $\cLM(R)$ is attained and
rational.  In particular, it is enough that $e(R)>d!\muo(R)$.
\end{proposition}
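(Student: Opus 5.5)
The plan is to reduce to Corollary~\resultref{cor:flag-attainment} by producing an admissible Bertini flag $\mathcal F$ with $\Lambda(\mathcal F)\leq\muo(R)$, and then to show that $\cLM(R)>\muo(R)$ using the power $\m^s$ as a test ideal.

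\textbf{Step 1: the flag.} If $R$ has isolated singularities, the empty flag is admissible. Then $\Lambda(\mathcal F)=1\leq\muo(R)$, since $\muo(R)\geq1$ is a positive integer.

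Otherwise $\dim\Sing R=1$. Apply Lemma~\resultref{lem:bertini-flag-step} once to obtain an $R$-regular $x$ such that $R_1=R/xR$ is complete, reduced, equidimensional and Cohen--Macaulay of dimension $d-1\geq1$, with $\dim\Sing R_1\leq0$. Then $R_1$ has isolated singularities, so $(R,x;R_1)$ is an admissible flag. For it,
\[
\Lambda(\mathcal F)=\max\{1,\muo(R)\}=\muo(R).
\]
This step is exactly the proof of Theorem~\resultref{thm:flag-bound} with $s\leq1$. It is the only place the hypothesis $\dim\Sing R\leq1$ enters, and it is what makes $\Lambda(\mathcal F)$ involve only $\muo(R)$ and not $\muo(R_1)$.

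\textbf{Step 2: a large ratio.} Since $R$ is reduced of positive dimension, $e_R(\m^s)=s^de(R)$. Hence the hypothesis says precisely that
\[
\frac{e_R(\m^s)}{d!\,\length_R(R/\m^s)}>\muo(R).
\]
Therefore $\cLM(R)>\muo(R)\geq\Lambda(\mathcal F)$, and Corollary~\resultref{cor:flag-attainment} gives attainment and rationality. Theorem~\resultref{thm:flag-bound} applies because $k$ is infinite and $R$ is complete, reduced, equidimensional and Cohen--Macaulay.

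\textbf{Step 3: the special case.} Take $s=1$. Then $\length_R(R/\m)=1$, and the condition becomes $e(R)>d!\muo(R)$.

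\textbf{Main obstacle.} Confirming that a single Bertini section suffices. Lemma~\resultref{lem:bertini-flag-step} requires $t=\dim\Sing R>0$, which is why the isolated case is split off. The resulting $R_1$ has a singular locus of dimension at most $0$, meaning it is contained in the closed point, so $R_1$ has isolated singularities in the sense used in the definition of an admissible flag. Everything else is immediate from the cited results.
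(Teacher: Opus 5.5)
Your proposal is correct and follows the paper's proof. A Bertini flag with at most one section gives $\cinf(R)\leq\Lambda(\mathcal F)\leq\muo(R)$; the paper gets this directly from the ``at most $\dim\Sing R$ sections'' clause of Theorem~\resultref{thm:flag-bound}. The test ideal $\m^s$ then gives $\cLM(R)>\muo(R)$, and Proposition~\resultref{prop:attainment-above-tail} (packaged as Corollary~\resultref{cor:flag-attainment}) finishes. One minor point: $e_R(\m^s)=s^de(R)$ holds in every $d$-dimensional Noetherian local ring, so reducedness is not what justifies it.
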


\begin{proof}
Theorem~\resultref{thm:flag-bound} gives a flag of length at most one,
so $\cinf(R)\leq\muo(R)$.  On the other hand,
\[
 \cLM(R)\geq
 \frac{e_R(\m^s)}{d!\length_R(R/\m^s)}
 =\frac{s^d e(R)}{d!\length_R(R/\m^s)}
 >\muo(R).
\]
It remains to apply Proposition~\resultref{prop:attainment-above-tail}.
\end{proof}

In particular, this applies to every complete three-dimensional
normal Cohen--Macaulay local ring with infinite residue field for
which $e(R)>6\muo(R)$.

\begin{proof}[Proof of Theorem~\resultref{thm:veronese-attainment}]
For a standard graded $k$-algebra $C$, write $e(C)=e(C_{C_+})$.
We first show that if $C$ is reduced, equidimensional, and
Cohen--Macaulay of dimension at least two, then
\begin{equation}\label{eq:veronese-punctured-bound}
 \muo\bigl(\widehat{(C^{(n)})_{C^{(n)}_+}}\bigr)\leq e(C)
 \qquad(n\geq1).
\end{equation}
Veronese subrings preserve reducedness, equidimensionality, and
Cohen--Macaulayness.  For the last assertion, the decomposition of
$C$ by degrees modulo $n$ exhibits $C^{(n)}$ as a direct summand
of the finite $C^{(n)}$-module $C$; apply local cohomology at the
homogeneous maximal ideal.  These properties also hold after
localization at that ideal and completion.

Put $B=C^{(n)}$, $S=\widehat{B_{B_+}}$, and
$T=\widehat{C_{C_+}}$.  Let $Q$ be a nonmaximal prime of $S$,
and put $\mathfrak q=Q\cap B$.  Then $\mathfrak q\ne B_+$, since
$B_+S=\m_S$.  The maps from the punctured cones of $B$ and $C$
to $X=\Proj B=\Proj C$ are locally given by Laurent polynomial
extensions.  Let $x\in X$ be the image of
$\mathfrak q$, let $\mathfrak p_x\subsetneq C_+$ be the
homogeneous prime corresponding to $x$, and choose a prime $P$
of $T$ over $\mathfrak p_x$.  The maps
$\mathcal O_{X,x}\to B_{\mathfrak q}$ and
$\mathcal O_{X,x}\to C_{\mathfrak p_x}$ are localizations of
Laurent polynomial extensions.  The completion maps are regular by
excellence \cite{Stacks}*{Tag~0AH2}.  All the local rings involved
are Cohen--Macaulay and have infinite residue field.  Hence, it follows from 
Lemmas~\resultref{lem:flat-local-multiplicity}
and~\resultref{lem:punctured-multiplicity} that
\[
 e(S_Q)=e(B_{\mathfrak q})
 =e(\mathcal O_{X,x})
 =e(C_{\mathfrak p_x})=e(T_P)\leq e(C),
\]
proving \eqref{eq:veronese-punctured-bound}.

Fix $n\geq1$, and set $E=e(A)$.  Successive general degree-$n$ sections yield a
homogeneous regular sequence $f_1,\ldots,f_t$ such that
$
 C_i=A/(f_1,\ldots,f_i) \,(0\leq i\leq t)
$
is reduced, equidimensional, and Cohen--Macaulay, with
$\Proj C_t$ smooth over $k$.  At each step, apply Bertini to the smooth locus
\cite{Stacks}*{Tag~0FD6}, and avoid the generic points of the
irreducible components of the scheme and its singular locus.
The quotient is generically reduced and Cohen--Macaulay, hence
reduced, and the dimension of the projective singular locus drops
by at least one.  Since $k$ is infinite and perfect, these choices
can be made over $k$.  This implies $t\leq d-1$.  If $A$ is normal and
$d\geq3$, then $\dim\Sing(\Proj A)\leq d-3$, so we may take
$t\leq d-2$.  When $\Proj A$ is smooth, take $t=0$.

Since each $f_i$ has degree $n$,
$
 C_i^{(n)}\cong A^{(n)}/(f_1,\ldots,f_i).
$
Their completed local rings at the homogeneous maximal ideals
therefore form an admissible flag $\mathcal F_n$ for $R_n$.
Indeed, the terminal ring has isolated singularities,
by the smoothness of $\Proj C_t$ and the regularity of the
formal fibres.  The multiplicity formulas for a homogeneous
regular sequence and a Veronese subring give
\[
 e(C_i)=n^iE,\qquad e(R_n)=n^{d-1}E.
\]
By \eqref{eq:veronese-punctured-bound} and the bound $1$ for the
empty flag,
\[
 \cinf(R_n)\leq\Lambda(\mathcal F_n)\leq
 \begin{cases}
 n^{d-2}E & \text{in general},\\
 n^{d-3}E & \text{if $A$ is normal and $d\geq3$}.
 \end{cases}
\]
The maximal ideal of $R_n$ gives
\[
 \cLM(R_n)\geq\frac{e(R_n)}{d!}
 =\frac{n^{d-1}E}{d!}.
\]
The conclusions follow by comparing these bounds and applying
Proposition~\resultref{prop:attainment-above-tail}.
If $\Sing(\Proj A)\ne\varnothing$ has codimension $c$ in $\Proj A$, the same
construction uses at most $d-c$ sections and gives
$\cinf(R_n)\leq n^{d-c-1}E$.  Comparing with the preceding lower
bound shows that $n^c>d!$ suffices.
\end{proof}

\section{Examples}
\label{sec:examples}

The following example shows that the Bertini-flag bound can be strictly
smaller than $e(R)$, and can equal $\lceil e(R)/2\rceil$.

\begin{proposition}\label{prop:balanced-hypersurface}
Let $k$ be an infinite field, let $m\geq4$, and choose
$a,b\geq2$ with $a+b=m$.  If
$\operatorname{char}k=p>0$, assume further that $p\nmid mab$.  Set
$ R=k\fps{x,y,z,w}/(x^m+y^az^b+w^m).$
Then $R$ is a complete reduced equidimensional Cohen--Macaulay ring
of dimension three and
\[
 \Sing R=V(x,w,yz)
 =V(x,y,w)\cup V(x,z,w).
\]
Moreover,
$
 e(R)=m,\, \muo(R)=\max\{a,b\},
$
and $y+z$ defines an admissible one-step flag.  Consequently,
$
 \cinf(R)\leq\max\{a,b\}.
$
In particular, if $a=\lfloor m/2\rfloor$ and
$b=\lceil m/2\rceil$, then
\[
 \cinf(R)\leq\left\lceil\frac m2\right\rceil<m=e(R).
\]
\end{proposition}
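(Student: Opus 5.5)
The plan is to verify the ring-theoretic properties by direct computation, then compute $\Sing R$ and the multiplicities on the punctured spectrum, and finally check that $y+z$ is an admissible section. Theorem~\resultref{thm:flag-bound} then gives $\cinf(R)\le\Lambda(\mathcal F)=\max\{1,\muo(R)\}=\max\{a,b\}$.

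Write $f=x^m+y^az^b+w^m$. The hypersurface $R=k\fps{x,y,z,w}/(f)$ is complete, Cohen--Macaulay, and equidimensional of dimension three. Since its order is $m$ (as $a+b=m$), we get $e(R)=m$. Reducedness will follow from Serre's criterion once the singular locus has codimension $\ge1$.

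For the singular locus, take the Jacobian ideal:
\[
 \partial_x f=mx^{m-1},\qquad \partial_w f=mw^{m-1},\qquad
 \partial_y f=ay^{a-1}z^b,\qquad \partial_z f=by^az^{b-1}.
\]
Since $p\nmid mab$, this ideal defines the closed set $V(x,w,y^{a-1}z^{b-1}\cdot\ldots)\cap V(f)$. With $a,b\ge2$, this is $V(x,w,yz)$. The Jacobian criterion is valid over $k$ here: $f$ is a polynomial, so one can pass to the polynomial ring and then complete, using excellence. This yields the two components $V(x,y,w)$ and $V(x,z,w)$, i.e.\ the $z$-line and the $y$-line. They have dimension one, so $R$ satisfies $(R_0)$ and is reduced.

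For $\muo(R)$, it suffices to compute $e(R_P)$ at the generic points $P=(x,y,w)$ and $(x,z,w)$: at other nonmaximal primes $R_P$ is regular, and multiplicity is upper semicontinuous along the irreducible curve, with the closed point excluded. At $P=(x,y,w)$, the element $z$ is a unit, so $f=x^m+u\,y^a+w^m$ in the regular local ring $k\fps{x,y,z,w}_{(x,y,w)}$, where $u=z^b$ is a unit. Its order is $\min\{m,a\}=a$, hence $e(R_P)=a$. Symmetrically, the other generic point gives $b$. I also need that specializations along the line $V(x,y,w)\setminus\{\m\}$ do not exceed this value. This holds because the only such primes are $P$ and $\m$: the curve is one-dimensional, so its only nonmaximal prime is the generic point. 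Hence $\muo(R)=\max\{a,b\}$.

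For the flag step, set $R_1=R/(y+z)\cong k\fps{x,y,w}/(x^m+(-1)^by^m+w^m)$. This is a surface with an isolated singularity, since its Jacobian forces $x=y=w=0$, again using $p\nmid m$. It is complete, Cohen--Macaulay, and of dimension two. It is normal, hence reduced, and equidimensional. The element $y+z$ is $R$-regular because $R$ is a domain-free hypersurface with $y+z\notin$ any minimal prime; for instance, $R_1$ has the expected dimension two. So $(R,y+z;R_1)$ is an admissible one-step flag with $\Lambda=\max\{1,a,b\}=\max\{a,b\}$. The main subtle step is the correctness of the Jacobian computation of $\Sing R$ for the complete ring in positive characteristic. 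For that I would rely on excellence of $k[x,y,z,w]_{(x,y,z,w)}$ and the regularity of the completion map, which together preserve the singular locus.

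The final assertion is arithmetic: with $a=\lfloor m/2\rfloor$ and $b=\lceil m/2\rceil$, both are $\ge2$ since $m\ge4$, and $\lceil m/2\rceil<m$.
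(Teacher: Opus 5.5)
Your proposal is correct and follows essentially the same route as the paper: locate $\Sing R$ via the partial derivatives, read off $e(R)=m$, $e(R_P)=a$, $e(R_Q)=b$ from orders of $f$, and check that $R/(y+z)$ has isolated singularities before invoking Theorem~\ref{thm:flag-bound}. The other differences are cosmetic: Serre's criterion instead of square-freeness of $f$ for reducedness, and a dimension count instead of non-divisibility for regularity of $y+z$.

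One caution on the step you flagged as subtle. For an imperfect infinite $k$ of characteristic $p$, the Jacobian criterion, even after reducing to $k[x,y,z,w]_{(x,y,z,w)}$ via excellence, detects smoothness rather than regularity. So it only yields $\Sing R\subseteq V(x,w,yz)$. The reverse inclusion must come from the direct observation that every term of $f$ lies in $\tilde{\mathfrak p}^{2}$ whenever $x,w,yz\in\tilde{\mathfrak p}$ (as the paper argues). Equivalently, it follows from your own computation $e(R_P)=a\geq 2$, $e(R_Q)=b\geq 2$, together with closedness of $\Sing R$.
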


\begin{proof}
Let $S=k\fps{x,y,z,w}$ and
$f=x^m+y^az^b+w^m$.  Since $S$ is a UFD, a repeated
irreducible factor of $f$ would divide both
\[
 \frac{\partial f}{\partial x}=mx^{m-1}
 \quad\text{and}\quad
 \frac{\partial f}{\partial w}=mw^{m-1},
\]
which is impossible.  Thus $f$ is square-free and $R$ is reduced.
As a hypersurface, $R$ is Cohen--Macaulay and equidimensional of
dimension three.

Let $\mathfrak p\in\Spec R$, and let
$\widetilde{\mathfrak p}$ be its inverse image in $S$.  The
hypersurface $R_{\mathfrak p}$ is regular precisely when
$
 f\notin
 \bigl(\widetilde{\mathfrak p}S_{\widetilde{\mathfrak p}}\bigr)^2.
$
If $x,w,yz\in\widetilde{\mathfrak p}$, then every term of $f$
belongs to this square, and the localization is singular.  Conversely,
if one of $x,w,yz$ is not in $\widetilde{\mathfrak p}$, then one
of
$
 mx^{m-1},\, mw^{m-1},\, ay^{a-1}z^b
$
is a unit in $S_{\widetilde{\mathfrak p}}$.  A derivation carries
the square of the maximal ideal into that maximal ideal, so $f$
cannot belong to the square.  This proves
$
 \Sing R=V(x,w,yz).
$
The generic points of the two components of $\Sing R$ are
\[
 P=(x,y,w)R,\qquad Q=(x,z,w)R.
\]
At $P$, the element $z$ is a unit and $f$ has order $a$;
at $Q$, the element $y$ is a unit and $f$ has order $b$.
By the multiplicity formula for a hypersurface, 
$
 e(R_P)=a,\, e(R_Q)=b.
$
All other nonmaximal localizations are regular.  Hence
$\muo(R)=\max\{a,b\}$.  At the closed point, $f$ has order
$m$, so $e(R)=m$.

It remains to verify the assertion about $y+z$.
Under the substitution $y=-z$, $f$ becomes the nonzero polynomial
$
 x^m+(-1)^az^m+w^m.
$
Thus $y+z$ does not divide $f$.  Since $f$ is square-free,
$y+z$ belongs to no minimal prime of $R$ and is $R$-regular.
The quotient is
\[
 R/(y+z)\cong
 k\fps{x,z,w}/(x^m+(-1)^az^m+w^m).
\]
The same square-free argument shows that this ring is reduced.  If a
prime over the displayed equation is nonmaximal, at least one of
$x,z,w$ is a unit, and the corresponding partial derivative is a
unit.  The quotient therefore has isolated singularities.  This is an
admissible one-step flag, and Theorem~\resultref{thm:flag-bound} proves the
asserted estimate.
\end{proof}

The following example shows that the local multiplicities in Theorem~\resultref{thm:flag-bound} cannot
be replaced by $1$, even when a single section has isolated singularities.

\begin{proposition}\label{prop:ordinary-multiple-point}
Let $k$ be an infinite field, choose distinct elements
$\lambda_1,\ldots,\lambda_n\in k$, where $n\geq3$, and put
\[
 f(u,v)=\prod_{j=1}^n(u-\lambda_jv),\qquad
 A=k\fps{u,v}/(f),\qquad
 R=A\fps{z}.
\]
Then $R$ is a complete reduced Cohen--Macaulay hypersurface of
dimension two,
$
 \Sing R=V(u,v),
$
and $z$ defines an admissible one-step flag $R\to A$.  For this
flag, $\muo(R)=n$, while
$
 \frac n2\leq\cinf(R)\leq n.
$
\end{proposition}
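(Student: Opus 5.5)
Structurally $R$ behaves like the earlier hypersurface example, so I will first dispatch the routine properties. Since $f$ is a product of distinct linear forms, it is square-free in the UFD $k\fps{u,v,z}$, so $R=k\fps{u,v,z}/(f)$ is a reduced, Cohen--Macaulay, equidimensional hypersurface of dimension two.

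For the singular locus I will use the Jacobian criterion as in Proposition~\resultref{prop:balanced-hypersurface}. The partials $f_u,f_v$ vanish simultaneously only on $V(u,v)$ when $\operatorname{char}k\nmid n$. In general I will argue directly instead: away from $V(u,v)$ the local equation factors with exactly one vanishing linear factor times a unit, hence the localization is regular. At $P=(u,v)R$, the equation has order $n$ in $k\fps{u,v,z}_{(u,v)}$, so $R_P$ is singular with $e(R_P)=n$. Hence $\Sing R=V(u,v)$, and this already gives $\muo(R)=n$, since all other nonmaximal localizations are regular and $e(R)=n$ bounds everything by Lemma~\resultref{lem:punctured-multiplicity}. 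Next, $z$ is $R$-regular and $R/zR=A$, a reduced one-dimensional ring, hence with isolated singularities. So $(R,z;A)$ is an admissible flag, and Theorem~\resultref{thm:flag-bound} gives $\cinf(R)\le\Lambda=\max\{1,n\}=n$.

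The real content is the lower bound $\cinf(R)\ge n/2$. I plan to test ideals that are large in the $z$-direction and equal to powers of $\m_A$ in the $(u,v)$-directions. Take
\[
 I_{s,t}=(u,v)^s+(z^t)\subset R.
\]
Since $R/I_{s,t}\cong (A/\m_A^s)\otimes_k k[z]/(z^t)$, we get $\length(R/I_{s,t})=t\,\length_A(A/\m_A^s)$. For $s\ge n$ this equals $t\bigl(ns-\binom n2\bigr)$, using the Hilbert function of the plane curve of degree $n$. The multiplicity is $e_R(I_{s,t})=e_A(\m_A^s)\cdot t=nst$: with the parameter ideal $(\ell^s,z^t)$, where $\ell$ is a general linear form, we have a reduction of $I_{s,t}$ because $(u,v)^s$ is integral over $\ell^s$ in $A$. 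Thus
\[
 \frac{e_R(I_{s,t})}{2!\,\length(R/I_{s,t})}=\frac{nst}{2t\bigl(ns-\binom n2\bigr)}\longrightarrow\frac12 \quad (s\to\infty),
\]
which is too weak. So instead I choose $s$ small and let $t\to\infty$ to get large colength. With $s=1$, i.e.\ $I=(u,v,z^t)$, the colength is $t$ and the multiplicity is $e(A)\,t=nt$, giving ratio $n/2$ for every $t$. Since the colength $t$ is arbitrarily large, $\cinf(R)\ge n/2$.

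The only step needing care is the multiplicity computation $e_R((u,v)+(z^t))=nt$. I will justify it via $(\ell,z^t)$ being a reduction in the Cohen--Macaulay ring $R$, so that $e=\length(R/(\ell,z^t))=t\,\length(A/\ell A)=tn$. Here $\length(A/\ell A)=n$ because $\ell$ is a general linear form not dividing $f$. Alternatively, I can invoke Remark~\resultref{rem:power-series-tail}, which gives $\cinf(R)=\cLM(R)\ge e(R)/2!=n/2$ directly from the maximal ideal; that is the cleanest route and the one I would write.
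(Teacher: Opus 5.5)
Your proposal is correct and follows essentially the same route as the paper: the same direct singular-locus analysis (exactly one linear factor through a prime off $V(u,v)$, order $n$ at $(u,v)R$), the upper bound from Theorem~\resultref{thm:flag-bound} applied to the flag $(R,z;A)$, and the lower bound from the ideals $(u,v)R+(z^t)$, reduced by the parameter ideal $(\ell,z^t)$ with $\ell$ a general linear form, which give ratio exactly $n/2$ at every colength $t$. Your alternative via Remark~\resultref{rem:power-series-tail} is also valid, and it is the same argument in disguise, since pulling back $\m_R$ along $z\mapsto z^t$ produces exactly these ideals.
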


\begin{proof}
The power series $f$ is square-free, so $A$ and $R$ are
reduced; being hypersurfaces, they are Cohen--Macaulay and
equidimensional.  Let $\widetilde{\mathfrak p}$ be a prime of
$k\fps{u,v,z}$ containing $f$.
If it does not contain both $u$ and $v$, it contains exactly one
factor $u-\lambda_jv$: two distinct factors would force both $u$
and $v$ into the prime.  All other factors are units after
localization, and $u-\lambda_jv$ does not belong to the square of
the localized maximal ideal; indeed, its $u$-derivative is one.
Hence the hypersurface localization is regular.  If
$u,v\in\widetilde{\mathfrak p}$, then
$f\in\widetilde{\mathfrak p}^{\,n}\subseteq
\widetilde{\mathfrak p}^{\,2}$, and the localization is singular.
Thus $\Sing R=V(u,v)$.

The element $z$ is $R$-regular and $R/zR=A$.  A reduced
one-dimensional local ring has isolated singularities, so this is an
admissible flag.  At the nonmaximal singular prime $P=(u,v)R$,
$
 \widehat{R_P}\cong k\lps{z}\fps{u,v}/(f).
$
Indeed, localization at $P$ inverts every nonzero element of
$k\fps{z}$.  The displayed isomorphism follows by reducing modulo
$(u,v)^r$ and taking the inverse limit.
Completion preserves multiplicity, and $f$ has order $n$ in the
two-dimensional regular local ring $k\lps{z}\fps{u,v}$.  Hence
$e(R_P)=n$.  Every other nonmaximal localization is regular, so
$\muo(R)=n$.
By Theorem~\resultref{thm:flag-bound}, $\cinf(R)\leq n$.

For the lower bound, let $\mathfrak q=(u,v)A$, and put
$
 I_L=\mathfrak qR+(z^L)\,(L\geq 1).
$
Then $R/I_L\cong k\fps{z}/(z^L)$, so
$\length_R(R/I_L)=L$.  Choose a principal minimal reduction
$(h)$ of $\mathfrak q$.  For some $N\geq2$,
$\mathfrak q^N=h\mathfrak q^{N-1}$.  Working in $R$, we obtain
\[
 I_L^N=\mathfrak q^N+z^LI_L^{N-1}
       =h\mathfrak q^{N-1}+z^LI_L^{N-1}
       =(h,z^L)I_L^{N-1},
\]
where the last equality uses
$I_L^{N-1}=\mathfrak q^{N-1}+z^LI_L^{N-2}$ and
$hI_L^{N-2}\subseteq I_L^{N-1}$.
Thus the parameter ideal $(h,z^L)$ reduces $I_L$.  The ring $R$ is
two-dimensional Cohen--Macaulay, and
$\length_A(A/hA)=e(A)=n$.  Therefore
\[
 e_R(I_L)=\length_R(R/(h,z^L))=Ln,
\]
and
\[
 \frac{e_R(I_L)}{2!\length_R(R/I_L)}=\frac n2
\]
for every $L$.  Letting $L\to\infty$ proves the lower bound.
\end{proof}

\begin{example}\label{ex:non-gorenstein-veronese}
Let $k$ be an infinite perfect field, let $r\geq3$, and put
$
 C=k[u,v]^{(r)},\, A=C[z],
$
where $C_1=k[u,v]_r$ and $\deg z=1$.  For $n\geq3$, set
\[
 R_{r,n}=\widehat{(A^{(n)})_{A^{(n)}_+}}.
\]
Then $R_{r,n}$ is a complete three-dimensional normal
Cohen--Macaulay domain, is not Gorenstein, and has a one-dimensional
singular locus.  Moreover,
$
 e(R_{r,n})=n^2r,\, \muo(R_{r,n})=r,
$
and $\cLM(R_{r,n})$ is attained and rational.

The ring $C$ is normal and free over
$k[u^r,v^r]$, with basis
$
 1,\, u^iv^{r-i}\,(1\leq i\leq r-1).
$
Thus $A$, its Veronese subrings, and their completed local rings
at the vertices are normal and Cohen--Macaulay.  The ring $C$
is regular away from its vertex, as is seen by localizing at
$u^r$ and $v^r$.  Consequently, $\Proj A$ has exactly one
singular point, namely the vertex of $C$ in the chart
$D_+(z)\cong\Spec C$.

Write $B=A^{(n)}$.  In the chart where $z^n$ is invertible,
$
 B_{z^n}\cong C[T,T^{-1}],\, T=z^n,
$
where the degree-one generators of $C$ correspond to
$u^iv^{r-i}/z$ for $0\leq i\leq r$.  Thus $\Sing B$ is the line defined by a prime
$\mathfrak q$ with $B/\mathfrak q\cong k[T]$.
Since $B$ is excellent,
\[
 \Sing R_{r,n}=V(\mathfrak qR_{r,n}),\qquad
 R_{r,n}/\mathfrak qR_{r,n}\cong k\fps{T}.
\]
The same local multiplicity comparison as in
\eqref{eq:veronese-punctured-bound} yields
$\muo(R_{r,n})=e(C)=r$, whereas
$e(R_{r,n})=n^2r$ by the Veronese multiplicity formula.

Finally, $C/(u^r,v^r)$ has a square-zero maximal ideal of
$k$-dimension $r-1$, so $C_{C_+}$ has Cohen--Macaulay type $r-1>1$.
The localization $B_{\mathfrak q}$ is obtained from $C_{C_+}$
by extending the residue field to $k(T)$ and has the same type.
Since the Gorenstein property descends under completion and is
preserved by localization, $R_{r,n}$ is not Gorenstein.
As $\dim\Sing R_{r,n}=1$,
we deduce from Theorem~\resultref{thm:flag-bound} that
\[
 \cinf(R_{r,n})\leq r<\frac{n^2r}{6}\leq\cLM(R_{r,n}).
\]
Proposition~\resultref{prop:attainment-above-tail} proves attainment
and rationality.
\end{example}

\subsection{Cones over del Pezzo products}
\label{subsec:del-pezzo-cones}

Recall that a two-dimensional rational double point $D$ satisfies
$\cLM(D)=1$ by \cite{MaSmirnov}*{Corollary~3.1.2}, and that
$\cLM(D\fps{t})\leq\cLM(D)$ by
\cite{MaSmirnov}*{Corollary~2.1.10}.
It follows that if $D$ is the local ring
of a Du Val point over an algebraically closed field $k$ of
characteristic zero, and $D\to B$ is a flat local homomorphism
with regular closed fibre, then $\cLM(B)=1$.
Indeed, write $\widehat D=k\fps{u,v,w}/(f)$, with $f$ an ADE
equation, and let $K$ be the residue field of $B$.
The map $\widehat D\to\widehat B$ is flat, and a coefficient
field $K\subset\widehat B$ can be chosen compatibly with $k$
\cite{Stacks}*{Tags~0C4G and~0C34}.
Let $a$ be the dimension of the closed fibre.
The structure map and lifts of a regular system of parameters
of the completed closed fibre define a surjection
\[
 K\fps{u,v,w,t_1,\ldots,t_a}/(f)\to\widehat B.
\]
This is an isomorphism modulo the maximal ideal of $\widehat D$;
by flatness over $\widehat D$ and Nakayama's lemma, its
kernel is zero.  Thus $\widehat B$ is a formal power series
extension of a rational double point over $K$, and the assertion
follows from the bounds above and invariance of $\cLM$ under completion.

\begin{proof}[Proof of Example~\resultref{ex:del-pezzo-cones}]
Fix $n\geq1$, and put $r=\dim Z=d-3$ and $X_n=\Spec A^{(n)}$.
The variety $Y=S\times Z$ is Gorenstein, klt, and Fano.
The cone criterion \cite{KollarSingularities}*{Lemma~3.1},
applied to $L^{\otimes n}=-nqK_Y$, shows that
$X_n$ is normal and klt; in particular, its
coordinate ring is Cohen--Macaulay.
The punctured cone is a $\mathbb G_m$-bundle over $Y$.
Since
\[
 \Sing Y=\coprod_{s\in\Sing S}\{s\}\times Z,
\]
the singular locus of the punctured cone is the inverse image
of $\Sing Y$, so that
$\dim\Sing R_n=d-2$ by excellence.

Choose $r+1$ general members $H_1,\ldots,H_{r+1}$ of
$\lvert L^{\otimes n}\rvert$, and put
$Y_i=H_1\cap\cdots\cap H_i$, with $Y_0=Y$.
By Bertini, the intersections $Y_i$ are smooth away from $\Sing Y$
and meet each of the finitely many smooth subvarieties
$\{s\}\times Z$ transversely for $i\leq r$;
the last section avoids the remaining finite set of singular
points.  These conditions can be imposed simultaneously
\cite{Stacks}*{Tag~0FD6}.
For $i\leq r$, at a point of $Y_i\cap(\{s\}\times Z)$, the differentials of
the defining equations in the $Z$-directions are independent.
Consequently, $Y_i\to S$ is smooth of relative dimension
$r-i$ near this intersection.  Thus each $Y_i$ is normal,
and $Y_{r+1}$ is smooth.

Let $h_j\in(A^{(n)})_1$ define $H_j$, and set
\[
 C_i=A^{(n)}/(h_1,\ldots,h_i),\qquad
 T_i=\widehat{(C_i)_{(C_i)_+}}
 \quad(0\leq i\leq r+1).
\]
The general linear forms $h_j$ form a regular sequence, so
each $C_i$ is Cohen--Macaulay of dimension $d-i\geq2$.
Its punctured spectrum is a $\mathbb G_m$-bundle over $Y_i$,
hence is normal.  By Serre's criterion, $C_i$ is normal
at the vertex as well.  The rings $T_i$ form an admissible Bertini
flag for $T_0=R_n$, with $T_{r+1}$ a two-dimensional ring
having isolated singularities.

For $i\leq r$, let $P$ be a nonmaximal prime of $T_i$.
If $(T_i)_P$ is not regular, its contraction to $C_i$ lies
over a point of $Y_i\cap(\{s\}\times Z)$ for some $s\in\Sing S$.
Composing the smooth maps just described with the regular
completion map, we obtain a regular local homomorphism
$
 \mathcal O_{S,s}\to(T_i)_P.
$
For any further regular local homomorphism $(T_i)_P\to B$,
where $B$ is excellent of dimension $d-i-1$,
the preceding observation therefore implies $\cLM(B)=1$.
If $(T_i)_P$ is regular, then so is $B$, and the same
conclusion holds.  Variant~\resultref{var:LM-recurrence} thus
applies with $b=1$ at every step.
Since $\cinf(T_{r+1})=1$ by Theorem~\resultref{thm:main},
we conclude by descending induction that $\cinf(R_n)=1$.

For every nonterminal ring $T_i$, $\muo(T_i)\leq2$
by the same local description and
Lemma~\resultref{lem:flat-local-multiplicity}.
Since $S$ is singular, $\muo(T_0)=2$.
The constructed flag has bound $2$, while every admissible
flag for $R_n$ includes the term $\muo(R_n)=2$.
Hence $\Lambda_{\mathrm{Bert}}(R_n)=2$.
Finally, $e(R_n)$ is the degree of $Y$ in the embedding
defined by $L^{\otimes n}$.  Taking the maximal ideal in the definition
of $\cLM$, we obtain
\[
 e(R_n)=(nq)^{d-1}(-K_Y)^{d-1},\qquad
 \cLM(R_n)\geq\frac{(nq)^{d-1}(-K_Y)^{d-1}}{d!}
 \to\infty.
\]
\end{proof}

\end{document}